\newcommand{\comm}{{\rm comm}}
\newcommand{\comp}{{\rm comp}}
\newcommand{\rel}{{\rm rel}}
\newcommand{\esp}[1]{\mathbb{E}\left[#1\right]}
\newcommand{\espk}[2]{\mathbb{E}_{#1}\left[#2\right]}
\newcommand{\R}{\mathbb{R}}
\newcommand{\cB}{\mathcal{B}}
\newcommand{\cN}{\mathcal{N}}
\newcommand{\one}{\mathds{1}}
\newcommand{\Ker}{{\rm Ker}}
\newcommand{\cL}{\mathcal{L}}
\newcommand{\out}{{\rm out}}
\newtheorem{assumption}{Assumption}[]
\newtheorem{theorem}{Theorem}[]
\newtheorem{lemma}{Lemma}[]
\newtheorem{corollary}{Corollary}[]
\title{Dual-Free Stochastic Decentralized Optimization\\
with Variance Reduction}
\author{
Hadrien Hendrikx\thanks{D\'epartement d’informatique de l’ENS, ENS, CNRS, PSL University, Paris, France} \footnotemark[2]\\
(\texttt{hadrien.hendrikx@inria.fr})
\and
Francis Bach\footnotemark[1] \thanks{INRIA, Paris, France}\\
(\texttt{francis.bach@inria.fr}) 
\and
Laurent Massouli\'e\footnotemark[1] \footnotemark[2]\\
(\texttt{laurent.massoulie@inria.fr})
}
\date{}
\begin{document}

\maketitle

\begin{abstract}
    We consider the problem of training machine learning models on distributed data in a decentralized way. For finite-sum problems, fast single-machine algorithms for large datasets rely on stochastic updates combined with variance reduction. Yet, existing decentralized stochastic algorithms either do not obtain the full speedup allowed by stochastic updates, or require oracles that are more expensive than regular gradients. In this work, we introduce a Decentralized stochastic algorithm with Variance Reduction called DVR. DVR only requires computing stochastic gradients of the local functions, and is computationally as fast as a standard stochastic variance-reduced algorithms run on a $1/n$ fraction of the dataset, where $n$ is the number of nodes. To derive DVR, we use Bregman coordinate descent on a well-chosen dual problem, and obtain a dual-free algorithm using a specific Bregman divergence. We give an accelerated version of DVR based on the Catalyst framework, and illustrate its effectiveness with simulations on real data.
\end{abstract}

\section{Introduction}
\label{sec:introduction}
We consider the regularized empirical risk minimization problem distributed on a network of $n$ nodes. Each node has a local dataset of size $m$, and the problem thus writes:\vspace{-3pt}
\begin{equation}\label{eq:general_problem}
    \min_{x \in \R^d} F(x) \triangleq \sum_{i=1}^n f_i(x), \hbox{ with } f_i(x) \triangleq \frac{\sigma_i}{2} \|x\|^2 + \sum_{j=1}^m f_{ij}(x),
\end{equation}
where $f_{ij}$ typically corresponds to the loss function for training example $j$ of machine $i$, and $\sigma_i$ is the local regularization parameter for node $i$. We assume that each function $f_{ij}$ is convex and $L_{ij}$-smooth (see, \emph{e.g.},~\citet{nesterov2013introductory}), and that each function $f_i$ is $M_i$-smooth. Following~\citet{xiao2017dscovr}, we denote $\kappa_i = (1 + \sum_{i=1}^m L_{ij}) / \sigma_i$ the stochastic condition number of $f_i$, and $\kappa_s = \max_i \kappa_i$. Similarly, the batch condition number is $\kappa_b = \max_i M_i / \sigma_i$. It always holds that $\kappa_b \leq \kappa_s \leq m \kappa_b$, but generally $\kappa_s \ll m \kappa_b$, which explains the success of stochastic methods. Indeed, $\kappa_s \approx m\kappa_b$ when all Hessians are orthogonal to one another which is rarely the case in practice, especially for a large dataset.

Regarding the distributed aspect, we follow the standard \emph{gossip} framework~\citep{boyd2006randomized, nedic2009distributed, duchi2012dual, scaman2017optimal} and assume that nodes are linked by a communication network which we represent as an undirected graph $G$. We denote $\cN(i)$ the set of neighbors of node $i$ and $\one \in \R^d$ the vector with all coordinates equal to $1$. Communication is abstracted by multiplication by a positive semi-definite matrix $W \in \R^{n \times n}$, which is such that $W_{k\ell} = 0$ if $k \notin \cN(\ell)$, and $\Ker(W) = {\rm Span}(\one)$. The matrix $W$ is called the \emph{gossip matrix}, and we denote its spectral gap by $\gamma = \lambda_{\min}^+(W) / \lambda_{\max}(W)$, the ratio between the smallest non-zero and the highest eigenvalue of $W$, which is a key quantity in decentralized optimization. We finally assume that nodes can compute a local stochastic gradient $\nabla f_{ij}$ in time $1$, and that communication (\emph{i.e.}, multiplication by $W$) takes time $\tau$. 

\textbf{Single-machine stochastic methods.} Problem~\eqref{eq:general_problem} is generally solved using first-order methods. When $m$ is large, computing $\nabla F$ becomes very expensive, and batch methods require $O(\kappa_b\log(\varepsilon^{-1}))$ iterations, which takes time $O(m\kappa_b\log(\varepsilon^{-1}))$, to minimize $F$ up to precision $\varepsilon$. In this case, updates using the stochastic gradients $\nabla f_{ij}$, where $(i,j)$ is selected randomly, can be much more effective~\citep{bottou2010large}. Yet, these updates are noisy and plain stochastic gradient descent (SGD) does not converge to the exact solution unless the step-size goes to zero, which slows down the algorithm. One way to fix this problem is to use variance-reduced methods such as SAG~\citep{schmidt2017minimizing}, SDCA~\citep{shalev2013stochastic}, SVRG~\citep{johnson2013accelerating} or SAGA~\citep{defazio2014saga}. These methods require $O((nm + \kappa_s)\log(\varepsilon^{-1}))$ stochastic gradient evaluations, which can be much smaller than $O(m\kappa_b\log(\varepsilon^{-1}))$. 

\textbf{Decentralized methods.} Decentralized adaptations of gradient descent in the smooth and strongly convex setting include EXTRA~\citep{shi2015extra}, DIGing~\citep{nedic2017achieving} or NIDS~\citep{li2019decentralized}. These algorithms have sparked a lot of interest, and the latest convergence results~\citep{jakovetic2018unification, xu2020distributed, li2020revisiting} show that EXTRA and NIDS require time $O((\kappa_b + \gamma^{-1})(m + \tau))\log(\varepsilon^{-1}))$ to reach precision $\varepsilon$. A generic acceleration of EXTRA using Catalyst~\citep{li2020revisiting} obtains the (batch) optimal $O(\sqrt{\kappa_b}(1 + \tau / \sqrt{\gamma})\log(\varepsilon^{-1})) $ rate up to log factors. Another line of work on decentralized algorithms is based on the \emph{penalty method}~\citep{li2018sharp, dvinskikh2019decentralized}. This consists in performing traditional optimization algorithms to problems augmented with a Laplacian penalty, and in particular enables the use of accelerated methods. Yet, these algorithms are sensitive to the value of the penalty parameter (when it is fixed), since it directly influences the solution they converge to. Another natural way to construct decentralized optimization algorithms is through dual approaches~\citep{scaman2017optimal, uribe2020dual}. Although the dual approach leads to algorithms that are optimal both in terms of number of communications and computations~\citep{scaman2019optimal, hendrikx2020optimal}, they generally assume access to the proximal operator or the gradient of the Fenchel conjugate of the local functions, which is not very practical in general since it requires solving a subproblem at each step. 

\textbf{Decentralized stochastic optimization.} Although both stochastic and decentralized methods have a rich litterature, there exist few decentralized stochastic methods with linear convergence rate. Although DSA~\citep{mokhtari2016dsa}, or GT-SAGA~\citep{xin2020decentralized} propose such algorithms, they respectively take time $O((m \kappa_s + \kappa_s^4 \gamma^{-1} (1 + \tau) \log(\varepsilon^{-1}))$ and $O((m + \kappa_s^2 \gamma^{-2})(1 + \tau) \log(\varepsilon^{-1}))$ to reach precision $\varepsilon$. Therefore, they have significantly worse rates than decentralized batch methods when $m=1$, and than single-machine stochastic methods when $n=1$. Other methods have better rates of convergence~\citep{shen2018towards, hendrikx2019accelerated} but they require evaluation of proximal operators, which may be expensive.

\textbf{Our contributions.} This work develops a dual approach similar to that of~\citet{hendrikx2019accelerated}, which leads to a decentralized stochastic algorithm with rate $O(m + \kappa_s + \tau \kappa_b / \sqrt{\gamma})$, where the $\sqrt{\gamma}$ factor comes from Chebyshev acceleration, such as used in~\citet{scaman2017optimal}. Yet, our algorithm, called DVR, can be formulated in the primal only, thus avoiding the need for computing expensive dual gradients or proximal operators. Besides, DVR is derived by applying Bregman coordinate descent to the dual of a specific augmented problem. Thus, its convergence follows from the convergence of block coordinate descent with Bregman gradients, which we prove as a side contribution. When executed on a single-machine, DVR is similar to dual-free SDCA~\citep{shalev2016sdca}, and obtains similar rates. We believe that the same methodology could be applied to tackle non-convex problems, but we leave these extensions for future work.

We present in Section~\ref{sec:algo_design} the derivations leading to DVR, namely the dual approach and the dual-free trick. Then, Section~\ref{sec:algo_results} presents the actual algorithm along with a convergence theorem based on block Bregman coordinate descent (presented in Appendix~\ref{app:bregman_coordinate_gradient}). Section~\ref{sec:acceleration} shows how to accelerate DVR, both in terms of network dependence (Chebyshev acceleration) and global iteration complexity (Catalyst acceleration~\citep{lin2017catalyst}).
Finally, experiments on real-world data are presented in Section~\ref{sec:experiments}, that demonstrate the effectiveness of DVR. 

\section{Algorithm Design} \label{sec:algo_design}
This section presents the key steps leading to DVR. We start by introducing a relevant dual formulation from~\citet{hendrikx2019accelerated}, then introduce the dual-free trick based on~\citet{lan2017optimal}, and finally show how this leads to DVR, an actual implementable decentralized stochastic algorithm, as a special case of the previous derivations.

\subsection{Dual formulation}

The standard dual formulation of Problem~\eqref{eq:general_problem} is obtained by associating a parameter vector to each node, and imposing that two neighboring nodes have the same parameters~\citep{boyd2011distributed, jakovetic2014linear, scaman2017optimal}. This leads to the following constrained problem, in which we write $\theta^{(i)} \in \R^d$ the local vector of node $i$: 
\begin{equation} \label{eq:constrained_standard}
\min_{\theta \in \R^{nd}} \sum_{i=1}^n f_i(\theta^{(i)}) \text{ such that } \forall k, \ell \in \cN(k), \ \theta^{(k)} = \theta^{(\ell)}.
\end{equation}
Following the approach of~\citet{hendrikx2019accelerated, hendrikx2020optimal}, we further split the $f_i(\theta^{(i)})$ term into $\sigma_i\|\theta^{(i)}\|^2/2 + \sum_{j=1}^n f_{ij}(\theta^{(ij)})$, with the constraint that $\theta^{(i)} = \theta^{(ij)}$ for all $j$. This is equivalent to the previous approach performed on an augmented graph~\citep{hendrikx2019accelerated, hendrikx2020optimal} in which each node is split into a star network with the regularization in the center and a local summand at each tip of the star. Thus, the equivalent augmented constrained problem that we consider writes:
\begin{equation} \label{eq:constrained_augmented}
\!\!\min_{\theta \in \R^{n(m+1)d}} \sum_{i=1}^n \! \left[\frac{\sigma_i}{2}\|\theta^{(i)}\|^2\! + \! \sum_{j=1}^m f_{ij}(\theta^{(ij)})\right] \text{s.t. } \forall k, \ell \in \cN(k),\ \theta^{(k)} = \theta^{(\ell)} \text{ and }  \forall i,j, \ \theta^{(i)} = \theta^{(ij)}.
\end{equation}
We now use Lagrangian duality, and introduce two kinds of multipliers. The variable $x$ corresponds to multipliers associated with the constraints given by edges of the communication graph (\emph{i.e.}, $\theta^{(k)} = \theta^{(\ell)} \text{ if } k \in \cN(\ell)$), that we will call \emph{communication edges}. Similarly, $y$ corresponds to the constraints associated with the edges that are specific to the augmented graph (\emph{i.e.}, $\theta^{(i)} = \theta^{(ij)} \ \forall i,j$) that we call \emph{computation} or \emph{virtual edges}, since they are not present in the original graph and were constructed for the augmented problem. Therefore, there are $E$ communication edges (number of edges in the initial graph), and $nm$ virtual edges. The dual formulation of Problem~\eqref{eq:constrained_augmented} thus writes:
\begin{equation} \label{eq:dual_problem}
    \min_{x \in \R^{Ed},\  y \in \R^{nmd}} \frac{1}{2} q_A(x,y) + \sum_{i=1}^n \sum_{j=1}^m f_{ij}^*((A(x,y))^{(ij)}), \hbox{ with } q_A(x,y) \triangleq (x, y)^\top A^\top \Sigma  A (x, y),
\end{equation}
and where $(x, y) \in \R^{(E + nm)d}$ is the concatenation of vectors $x \in \R^{Ed}$, which is associated with the communication edges, and $y \in \R^{n md}$, which is the vector associated with computation edges. We denote $\Sigma = {\rm Diag}(\sigma_1^{-1}, \cdots, \sigma_n^{-1}, 0, \cdots, 0) \otimes I_d \in \R^{n(m+1)d \times n(m+1)d}$ and $A$ is such that for all $z \in \R^d$, $A (e_{k,\ell} \otimes z) = \mu_{k\ell}(u_k - u_\ell) \otimes P_{k\ell} z$ for edge $(k,\ell)$, where $P_{k\ell} = I_d$ if $(k, \ell)$ is a communication edge, $P_{ij}$ is the projector on $\Ker(f_{ij})^\perp \triangleq (\cap_{x \in \R^d} \Ker(\nabla^2 f_{ij}(x)))^\perp$ if $(i,j)$ is a virtual edge, $z_1 \otimes z_2$ is the Kronecker product of vectors $z_1$ and $z_2$, and $e_{k,\ell} \in \R^{E + nm}$ and $u_k \in R^{n(m+1)}$ are the unit vectors associated with edge $(k, \ell)$ and node $k$ respectively.

Note that the upper left $nd \times nd$ block of $AA^\top$ (corresponding to the communication edges) is equal to $W \otimes I_d$ where $W$ is a gossip matrix (see, \emph{e.g.},~\citep{scaman2017optimal}) that depends on the $\mu_{k\ell}$. In particular, $W$ is equal to the Laplacian of the communication graph if $\mu_{k\ell}^2 = 1/2$ for all $(k,\ell)$. For computation edges, the projectors $P_{ij}$ account for the fact that the parameters $\theta^{(i)}$ and $\theta^{(ij)}$ only need to be equal on the subspaces on which $f_{ij}$ is not constant, and we choose $\mu_{ij}$ such that $\mu_{ij}^2 = \alpha L_{ij}$ for some $\alpha > 0$. Although this introduces heavier notations, explicitly writing $A$ as an $n(1+m)d \times (E + nm)d$ matrix instead of an $n(1+m) \times (E + nm)$ matrix allows to introduce the projectors $P_{ij}$, which then yields a better communication complexity than choosing $P_{ij} = I_d$. See~\citet{hendrikx2019accelerated, hendrikx2020optimal} for more details on this dual formulation, and in particular on the construction on the augmented graph. Now that we have obtained a suitable dual problem, we would like to solve it without computing gradients or proximal operators of $f_{ij}^*$, which can be very expensive.

\subsection{Dual-free trick}
Dual methods are based on variants of Problem~\eqref{eq:dual_problem}, and apply different algorithms to it. In particular, \citet{scaman2017optimal, uribe2020dual} use accelerated gradient descent~\citep{nesterov2013introductory}, and~\citet{hendrikx2018accelerated, hendrikx2019accelerated} use accelerated (proximal) coordinate descent~\citep{lin2015accelerated}. Let $p_\comm$ denote the probability of performing a communication step and $p_{ij}$ be the probability that node $i$ samples a gradient of $f_{ij}$, which are such that for all $i$, $\sum_{j=1}^m p_{ij} = 1 - p_\comm$.  Applying a coordinate update with step-size $\eta / p_\comm$ to Problem~\eqref{eq:dual_problem} in the direction $x$ (associated with communication edges) writes:
\begin{equation}
    x_{t+1} = x_t - \eta p_\comm^{-1} \nabla_x q_A(x_t, y_t),
\end{equation}
where we denote $\nabla_x$ the gradient in coordinates that correspond to $x$ (communication edges), and $\nabla_{y, ij}$ the gradient for coordinate $(ij)$ (computation edge). Similarly, the standard coordinate update of a local computation edge $(i,j)$ can be written as:
\begin{equation}\label{eq:dual_gd_comp}
    y_{t+1}^{(ij)} = \arg\min_{y \in \R^d}\bigg\{ \left(\nabla_{y, ij} q_A(x_t, y_t) + \mu_{ij}\nabla f_{ij}^*(\mu_{ij}y_t^{(ij)})\right)^\top y + \frac{p_{ij}}{2\eta} \|y - y_t^{(ij)}\|^2\bigg\},
\end{equation}
where the minimization problem actually has a closed form solution. Yet, as mentioned before, solving Equation~\eqref{eq:dual_gd_comp} requires computing the derivative of $f_{ij}^*$. In order to avoid this, a trick introduced by~\citet{lan2017optimal} and later used in~\citet{wang2017exploiting} is to replace the Euclidean distance term by a well-chosen Bregman divergence. More specifically, the Bregman divergence of a convex function $\phi$ is defined as:
\begin{equation}
    D_\phi(x, y) = \phi(x) - \phi(y) - \nabla \phi(y)^\top (x - y).
\end{equation}
Bregman gradient algorithms typically enjoy the same kind of guarantees as standard gradient algorithms, but with slightly different notions of \emph{relative} smoothness and strong convexity~\citep{bauschke2017descent, lu2018relatively}. Note that the Bregman divergence of the squared Euclidean norm is the squared Euclidean distance, and the standard gradient descent algorithm is recovered in that case. We now replace the Euclidean distance by the Bregman divergence induced by function $\phi: y \mapsto (L_{ij} / \mu_{ij}^2) f_{ij}^*(\mu_{ij} y^{(ij)})$, which is normalized to be $1$-strongly convex since $f_{ij}^*$ is $L_{ij}^{-1}$-strongly convex. We introduce the constant $\alpha > 0$ such that $\mu_{ij}^2 = \alpha L_{ij}$ for all computation edges $(i,j)$. Using the definition of the Bregman divergence with respect to $\phi$, we write:
\begin{align*}
y_{t+1}^{(ij)} &= \arg\min_{y \in \R^d} \left(\nabla_{y, ij} q_A(x_t, y_t) + \mu_{ij} \nabla f_{ij}^*(\mu_{ij} y_t^{(ij)})\right)^\top y + \frac{p_{ij}}{\eta} D_\phi\left(y, y_t^{(ij)}\right)\\
&= \arg \min_{y\in \R} \left(\frac{\alpha \eta}{p_{ij}} \nabla_{y, ij} q_A(x_t, y_t) - \left(1 - \frac{\alpha \eta}{p_{ij}}\right) \mu_{ij} \nabla f_{ij}^*(\mu_{ij}y_t^{(ij)}) \right)^\top y + f_{ij}^*(\mu_{ij}y)\\
&= \frac{1}{\mu_{ij}} \nabla f_{ij}\left(\left(1 - \frac{\alpha \eta}{p_{ij}}\right)\nabla f_{ij}^*(\mu_{ij} y_t^{(ij)}) - \frac{\alpha \eta}{\mu_{ij} p_{ij}} \nabla_{y, ij} q_A(x_t, y_t)\right).
\end{align*}
In particular, if we know $\nabla f_{ij}^*(\mu_{ij}y_t^{(ij)})$ then it is possible to compute $y_{t+1}^{(ij)}$. Besides, 
\begin{equation}
    \nabla f_{ij}^*(\mu_{ij}y_{t+1}^{(ij)}) = (1 - \alpha \eta)\nabla f_{ij}^*(\mu_{ij} y_t^{(ij)}) - \frac{\alpha \eta}{\mu_{ij}} \nabla_{y, ij} q_A(x_t, y_t),
\end{equation}
so we can also compute $\nabla f_{ij}^*(\mu_{ij}y_{t+1}^{(ij)})$, and we can use it for the next step. Therefore, instead of computing a dual gradient at each step, we can simply choose $y_0^{(i)} = \mu_{ij}^{-1} \nabla f_{ij}(z_0^{(ij)})$ for any $z_0^{(ij)}$, and iterate from this. Therefore, the Bregman coordinate update applied to Problem~\eqref{eq:dual_problem} in the block of direction $(i,j)$ with $y_0^{(ij)} = \mu_{ij}^{-1}\nabla f_i(z_0^{(ij)})$ yields:
\begin{equation}
\label{eq:dual-free}
z_{t+1}^{(ij)} = \left(1 - \frac{\alpha \eta}{p_{ij}}\right)z_t^{(ij)} - \frac{\alpha \eta}{p_{ij}\mu_{ij}}\nabla_{y, ij} q_A(x_t, y_t), \qquad y_{t+1}^{(ij)} = \mu_{ij}^{-1} \nabla f_i(z_{t+1}^{(ij)}).
\end{equation}
The iterations of~\eqref{eq:dual-free} are called a \emph{dual-free} algorithm because they are a transformation of the iterations from~\eqref{eq:dual_gd_comp} that do not require computing $\nabla f_{ij}^*$ anymore. This is obtained by replacing the Euclidean distance in~\eqref{eq:dual_gd_comp} by the Bregman divergence of a function proportional to $f_{ij}^*$.

\subsection{Distributed implementation}
Iterations from~\eqref{eq:dual-free} do not involve functions $f_{ij}^*$ anymore, which was our first goal. Yet, they consist in updating dual variables associated with edges of the augmented graph, and have no clear distributed meaning yet. In this section, we rewrite the updates of~\eqref{eq:dual-free} in order to have an easy to implement distributed algorithm. The key steps are (i) multiplication of the updates by $A$, (ii) expliciting the gossip matrix and (iii) remarking that $\theta_t^{(i)} = (\Sigma A(x_t, y_t))^{(i)}$ converges to the primal solution for all $i$. For a vector $z \in \R^{(n + nm)d}$, we denote $[z]_\comm \in \R^{nd}$ its restriction to the communication nodes, and $[M]_\comm \in \R^{nd \times nd}$ similarly refers to the restriction on communication edges of a matrix $M \in \R^{(n + nm)d \times (n + nm)d}$. By abuse of notations, we call $A_\comm \in \R^{nd \times Ed}$ the restriction of $A$ to communication nodes and edges. We denote $P_\comm$ the projector on communication edges, and $P_\comp$ the projector on $y$. We multiply the $x$ (communication) update in~\eqref{eq:dual-free} by $A$ on the left (which is standard~\citep{scaman2017optimal, hendrikx2019accelerated}) and obtain:
\begin{equation}\label{eq:comm_update_almost}
    A_\comm x_{t+1} = A_\comm x_t - \eta p_\comm^{-1} [A P_\comm A^\top]_\comm [\Sigma  A (x_t, y_t)]_\comm.
\end{equation}
Note that $[P_\comm A^\top \Sigma  A (x_t, y_t)]_\comm = [P_\comm A^\top]_\comm [\Sigma  A (x_t, y_t)]_\comm$ because $P_\comm$ and $\Sigma $ are non-zero only for communication edges and nodes. Similarly, and as previously stated, one can verify that $A_\comm[P_\comm A^\top]_\comm = [AP_\comm A^\top]_\comm = W \otimes I_d \in \R^{nd \times nd}$ where $W$ is a gossip matrix. We finally introduce $\tilde{x}_t \in \R^{nd}$ which is a variable associated with nodes, and which is such that $\tilde{x}_t = A_\comm x_t$. With this rewriting, the communication update becomes:
\begin{equation*}
    \tilde{x}_{t+1} = \tilde{x}_t - \eta p_\comm^{-1} (W \otimes I_d) \Sigma_\comm \left[A(x_t, y_t)\right]_\comm.
\end{equation*}
To show that $\left[A(x_t, y_t)\right]_\comm$ is locally accessible to each node, we write:
\begin{align*}
    [A (x_t, y_t)]_\comm^{(i)} &= (A_\comm x_t)^{(i)} - \bigg(\sum_{k=1}^n\sum_{j=1}^m (A (e_{kj} \otimes y_t^{(kj)}))^{(i)}\bigg) = (\tilde{x}_t)^{(i)} - \sum_{j=1}^m \mu_{ij} y_t^{(ij)}.
\end{align*}
We note this rescaled local vector $\theta_t = \Sigma _\comm ([A (x_t, y_t)]_\comm)$, and obtain for variables $\tilde{x}_t$ the gossip update of~\eqref{eq:comm_update}.
Note that we directly write $y_t^{(ij)}$ instead of $P_{ij}y_t^{(ij)}$ even though there has been a multiplication by the matrix $A$. This is allowed because Equation~\eqref{eq:grad_update} implies that (i) $y_t^{(ij)} \in \Ker(f_{ij})^\perp$ for all $t$, and (ii) the value of $(I_d - P_{ij})z_t^{(ij)}$ does not matter since $z_t^{(ij)}$ is only used to compute $\nabla f_{ij}$. We now consider computation edges, and remark that:
\begin{equation}\label{eq:grad_qA_comp}
    \nabla_{y, ij} q_A(x_t, y_t) = - \mu_{ij} (\Sigma _\comm)_{ii} ([A (x_t, y_t)]_\comm)^{(i)} = - \mu_{ij} \theta_t.
\end{equation}
Plugging Equation~\eqref{eq:grad_qA_comp} into the updates of~\eqref{eq:dual-free}, we obtain the following updates:
\begin{equation} \label{eq:comm_update}
    \tilde{x}_{t+1} = \tilde{x}_t - \frac{\eta}{p_\comm} (W \otimes I_d) \theta_t,
\end{equation}
for communication edges, and for the local update of the $j$-th component of node $i$:
\begin{equation}
\label{eq:grad_update}
z_{t+1}^{(ij)} = \left(1 - \frac{\alpha \eta}{p_{ij}}\right)z_t^{(ij)} + \frac{\alpha \eta}{p_{ij}} \theta_t^{(i)}, \qquad \theta_{t+1}^{(i)} = \frac{1}{\sigma_i}\bigg( \tilde{x}_{t+1}^{(i)} - \sum_{j=1}^m \nabla f_{ij}(z_{t+1}^{(ij)})\bigg).
\end{equation}
Finally, Algorithm~\ref{algo:DVR} is obtained by expressing everything in terms of $\theta_t$ and removing variable $\tilde{x}_t$. To simplify notations, we further consider $\theta$ as a matrix in $\R^{n \times d}$ (instead of a vector in $\R^{nd}$), and so the communication update of Equation~\eqref{eq:comm_update} is a standard gossip update with matrix $W$, which we recall is such that $W \otimes I_d = [AP_\comm A^\top]_\comm$.  
We now discuss the local updates of Equation~\eqref{eq:grad_update} more in details, which are closely related to dual-free SDCA updates~\citep{shalev2016sdca}.

\begin{algorithm}
\caption{DVR$(z_0)$}
\label{algo:DVR}
\begin{algorithmic}[1]
\STATE $\alpha = 2\lambda_{\min}^+(A_\comm^\top D_M^{-1} A_\comm)$, $\eta = \min\left( \frac{p_\comm}{\lambda_{\max}(A_\comm^\top \Sigma_\comm A_\comm)}, \frac{p_{ij}}{\alpha(1 + \sigma_i^{-1}L_{ij})} \right)$ \COMMENT{Init.}
\STATE $\theta_0^{(i)} = - (\sum_{j=1}^m \nabla f_{ij}(z_0^{(ij)})) / \sigma_i$. \COMMENT{$z_0$ is arbitrary but not $\theta_0$.}
\FOR[Run for $K$ iterations]{$t=0$ to $K-1$}
\STATE Sample $u_t$ uniformly in $[0, 1]$. \COMMENT{Randomly decide the kind of update}
\IF{$u_t \leq p_\comm$}
\STATE $\theta_{t+1} = \theta_t - \frac{\eta}{p_\comm} \Sigma  W \theta_t$ \COMMENT{Communication using $W$}
\ELSE 
\FOR{$i = 1$ to $n$}
\STATE Sample $j \in \{1, \cdots, m\}$ with probability $p_{ij}$.
\STATE $z_{t+1}^{(ij^\prime)} = z_{t}^{(ij^\prime)}$ for $j \neq j^\prime$ \COMMENT{Only one virtual node is updated}
\STATE $z_{t+1}^{(ij)} = \left(1 - \frac{\alpha \eta}{p_{ij}}\right)z_t^{(ij)} + \frac{\alpha \eta}{p_{ij}} \theta_t^{(i)}$ \COMMENT{Virtual node update}
\STATE $\theta_{t+1}^{(i)} = \theta_t^{(i)} - \frac{1}{\sigma_i}\left(\nabla f_{ij}(z_{t+1}^{(ij)}) - \nabla f_{ij}(z_{t}^{(ij)})\right)$ \COMMENT{Local update using $f_{ij}$}
\ENDFOR
\ENDIF
\ENDFOR
\STATE \textbf{return} $\theta_K$
\end{algorithmic}
\end{algorithm}

\section{Convergence Rate} \label{sec:algo_results}
The goal of this section is to set parameters $\eta$ and $\alpha$ in order to get the best convergence guarantees. We introduce $\kappa_\comm = \gamma \lambda_{\max}(A_\comm^\top \Sigma_\comm A_\comm) / \lambda_{\min}^+(A_\comm^\top D_M^{-1} A_\comm)$, where $\lambda_{\min}^+$ and $\lambda_{\max}$ respectively refer to the smallest non-zero and the highest eigenvalue of the corresponding matrices. We denote $D_M$ the diagonal matrix such that $(D_M)_{ii} = \sigma_i + \lambda_{\max}(\sum_{j=1}^m L_{ij} P_{ij})$, where $\nabla^2 f_{ij}(x) \preccurlyeq L_{ij} P_{ij}$ for all $x \in \R^d$. Note that we use notation $\kappa_\comm$ since it corresponds to a condition number. In particular, $\kappa_\comm \leq \kappa_s$ when $\sigma_i = \sigma_j$ for all $i,j$, and $\kappa_\comm$ more finely captures the interplay between regularity of local functions (through $D_M$ and $\Sigma_\comm$) and the topology of the network (through $A$) otherwise. 

\begin{theorem}\label{thm:dvr_rate}
We choose $p_\comm = \big(1 + \gamma \frac{m + \kappa_s}{\kappa_\comm}\big)^{-1}$, $p_{ij} \propto (1 - p_\comm)(1 + L_{ij} / \sigma_i)$ and $\alpha$ and $\eta$ as in Algorithm~\ref{algo:DVR}. Then, there exists $C_0 > 0$ that only depends on $\theta_0$ (initial conditions) such that for all $t > 0$, the error and the expected time $T_\varepsilon$ required to reach precision $\varepsilon$ are such that:
\begin{equation}
    \sum_{i=1}^n\frac{1}{2}\|\theta^{(i)}_t - \theta^\star\|^2 \leq C_0\left(1 - \frac{\alpha \eta}{2}\right)^t, \hbox{ and so } T_\varepsilon = O\left(\left[2(m + \kappa_s) + \tau \frac{\kappa_\comm}{\gamma}\right]\log\varepsilon^{-1}\right).
\end{equation}
\end{theorem}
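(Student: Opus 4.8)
The plan is to recognize Algorithm~\ref{algo:DVR} as an instance of randomized block coordinate descent with Bregman gradients, applied to the dual problem~\eqref{eq:dual_problem} with the block decomposition into one block gathering all communication variables $x$, equipped with the Euclidean reference function $\tfrac12\|\cdot\|^2$, and one block per computation edge $(i,j)$, equipped with the reference function $\phi_{ij}: y \mapsto \tfrac{L_{ij}}{\mu_{ij}^2} f_{ij}^*(\mu_{ij}P_{ij}y) = \tfrac1\alpha f_{ij}^*(\mu_{ij}P_{ij}y)$, normalized to be $1$-strongly convex. That this coordinate scheme, after the substitution introducing $z_t^{(ij)}$ and the left-multiplication by $A$, is exactly DVR is the content of the derivations in Section~\ref{sec:algo_design}, in particular~\eqref{eq:dual-free} and~\eqref{eq:grad_update}. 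It then suffices to (i) apply the general convergence result for block Bregman coordinate descent of Appendix~\ref{app:bregman_coordinate_gradient}, after checking its two hypotheses with the parameters of the statement; (ii) translate the resulting dual contraction into a bound on $\sum_i \tfrac12\|\theta_t^{(i)} - \theta^\star\|^2$; and (iii) optimize over $p_\comm$.

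For the relative smoothness hypothesis, I would compute the block Hessians of the smooth part $G(x,y) = \tfrac12 q_A(x,y) + \sum_{ij} f_{ij}^*(\mu_{ij}P_{ij}y^{(ij)})$ of~\eqref{eq:dual_problem}: the $x$-block Hessian is $A_\comm^\top\Sigma_\comm A_\comm$, which is $\lambda_{\max}(A_\comm^\top\Sigma_\comm A_\comm)$-smooth relative to $\tfrac12\|\cdot\|^2$ and gives the first term of $\eta$; the $(i,j)$-block Hessian is $\mu_{ij}^2\sigma_i^{-1}P_{ij} + \alpha\nabla^2\phi_{ij}$, and since $\phi_{ij}$ is $1$-strongly convex (so $\nabla^2\phi_{ij}\succeq P_{ij}$) and $\mu_{ij}^2 = \alpha L_{ij}$, it is $\alpha(1+L_{ij}/\sigma_i)$-smooth relative to $\phi_{ij}$, which gives the second term of $\eta$. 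For the relative strong convexity hypothesis, the crucial inputs are $\nabla^2 f_{ij}(x)\preccurlyeq L_{ij}P_{ij}$, hence $\nabla^2 f_i \preccurlyeq (D_M)_{ii}I_d$, which translate through Fenchel conjugation and the structure of $A$ into the statement that $G$ is strongly convex relative to $\Phi = \tfrac12\|x\|^2 + \sum_{ij}\phi_{ij}$ with constant $\tfrac\alpha2 = \lambda_{\min}^+(A_\comm^\top D_M^{-1}A_\comm)$ — which is precisely the reason for the choice of $\alpha$ in Algorithm~\ref{algo:DVR}. I expect this last point to be the main obstacle: $G$ is degenerate along the kernel directions of $A_\comm$ (cycle space of the graph) and of the $f_{ij}$'s, so one must first observe that the DVR iterates and the dual optimum all stay in the fixed affine subspace $\Ker(A_\comm)^\perp \oplus \big(\bigoplus_{ij}\Ker(f_{ij})^\perp\big)$ (the updates never move the complementary components, and $z_0$ can be chosen accordingly), and then prove the spectral inequality $\nabla^2 G \succeq \tfrac\alpha2\nabla^2\Phi$ on that subspace by a Schur-complement computation coupling the graph term $A_\comm^\top\Sigma_\comm A_\comm$ with the local smoothness encoded in $D_M$; this is where $\lambda_{\min}^+(A_\comm^\top D_M^{-1}A_\comm)$, and thus $\kappa_\comm$, enters. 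The remaining checks (convexity of $G$, Legendre-type regularity of the reference functions, and matching the step sizes $\eta = \min_b p_b/\ell_b$ of the algorithm with those required by the appendix) are routine.

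Granted the two hypotheses, the appendix theorem yields a contraction of the form $\mathbb{E}\big[D_\Phi\big((x^\star,y^\star),(x_{t+1},y_{t+1})\big)\big] \le \big(1 - \tfrac{\alpha\eta}{2}\big)\,\mathbb{E}\big[D_\Phi\big((x^\star,y^\star),(x_t,y_t)\big)\big]$. To pass to the primal I would use $\theta_t - (\theta^\star,\dots,\theta^\star) = \Sigma_\comm\big[A(x_t - x^\star, y_t - y^\star)\big]_\comm$, which holds because $\theta_t^{(i)} = \big(\Sigma_\comm[A(x_t,y_t)]_\comm\big)^{(i)}$ and this equals $\theta^\star$ at the dual optimum; since $\Sigma_\comm[A\cdot]_\comm$ is a bounded operator and $\Phi$ is $1$-strongly convex in the relevant norm (so $D_\Phi \ge \tfrac12\|\cdot\|^2$ on the active subspace), $\sum_i \tfrac12\|\theta_t^{(i)} - \theta^\star\|^2$ is at most a fixed multiple of $D_\Phi\big((x^\star,y^\star),(x_t,y_t)\big)$. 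Taking $C_0$ to be that multiple times $D_\Phi\big((x^\star,y^\star),(x_0,y_0)\big)$ — a quantity depending only on the initialization, hence only on $\theta_0$ (as $x_0 = 0$ and $\theta_0$ is determined by $z_0$ through the $\nabla f_{ij}$) — gives the claimed linear rate, in expectation.

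Finally, I would specialize the free parameters to read off the complexity. With $p_{ij}\propto (1-p_\comm)(1+L_{ij}/\sigma_i)$ and $\sum_j p_{ij} = 1-p_\comm$, the computation-block term of $\eta$ becomes $\min_{i,j}\tfrac{p_{ij}}{\alpha(1+L_{ij}/\sigma_i)} = \tfrac{1-p_\comm}{\alpha\max_i(m+\sigma_i^{-1}\sum_j L_{ij})}$, which is $\asymp \tfrac{1-p_\comm}{\alpha(m+\kappa_s)}$, so that $\alpha\eta \asymp \min\big(\tfrac{2\gamma p_\comm}{\kappa_\comm}, \tfrac{1-p_\comm}{m+\kappa_s}\big)$ after substituting $\kappa_\comm = \gamma\lambda_{\max}(A_\comm^\top\Sigma_\comm A_\comm)/\lambda_{\min}^+(A_\comm^\top D_M^{-1}A_\comm)$. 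Choosing $p_\comm = \big(1+\gamma\tfrac{m+\kappa_s}{\kappa_\comm}\big)^{-1}$ balances the two terms and gives $(\alpha\eta)^{-1}\asymp m+\kappa_s+\kappa_\comm/\gamma$, hence $K \asymp (m+\kappa_s+\kappa_\comm/\gamma)\log\varepsilon^{-1}$ iterations are enough to reach precision $\varepsilon$. Each iteration costs time $\tau$ with probability $p_\comm$ (one gossip step) and time $1$ with probability $1-p_\comm$ ($n$ stochastic gradients in parallel), so the expected running time is $K(p_\comm\tau + 1 - p_\comm)$; with the chosen $p_\comm$ one has $Kp_\comm \asymp \tfrac{\kappa_\comm}{\gamma}\log\varepsilon^{-1}$ and $K(1-p_\comm)\asymp (m+\kappa_s)\log\varepsilon^{-1}$, so $T_\varepsilon = O\big(\big[2(m+\kappa_s) + \tau\kappa_\comm/\gamma\big]\log\varepsilon^{-1}\big)$, as claimed.
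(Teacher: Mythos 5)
Your overall route is the paper's: recast Algorithm~\ref{algo:DVR} as Bregman block coordinate descent on the dual problem~\eqref{eq:dual_problem} with reference functions $\phi_{ij}\propto f_{ij}^*$ on virtual edges, establish directional relative smoothness and relative strong convexity with exactly the constants of Lemma~\ref{lemma:rel_smooth_sc}, invoke the general result of Appendix~\ref{app:bregman_coordinate_gradient} (Theorem~\ref{thm:bregman_coordinate}) to get a contracting dual Lyapunov function, translate to the primal iterates, and balance $p_\comm$ against $p_{ij}$ to read off $T_\varepsilon$; this matches Theorem~\ref{thm:dvr_dual_guarantees} and the final tuning step. Your primal recovery (strong convexity of the reference function $\Phi$ plus the operator norm of $A$) is a harmless variant of the paper's Lemma~\ref{lemma:primal_error}, which instead bounds $\|A\lambda_t-A\lambda^\star\|^2$ by the dual suboptimality via strong convexity of the dual objective; both give a constant $C_0$ depending only on the initialization. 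Note also that Theorem~\ref{thm:bregman_coordinate} contracts the Lyapunov function $D_\phi(\lambda^\star,\lambda_t)+\frac{\eta}{p_{\min}}(D(\lambda^\star)-D(\lambda_t))$ rather than the Bregman divergence alone, but since the second term is nonnegative at $\lambda=\lambda^\star$ your conclusion still follows.

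There is, however, one genuine gap: your block decomposition (``one block per computation edge $(i,j)$'') is not the sampling used by Algorithm~\ref{algo:DVR}. In a DVR computation step \emph{every} node $i$ simultaneously updates one of its virtual edges, so the updated block is a set $\{(1,j_1),\dots,(n,j_n)\}$ of $n$ edges, and the coordinate probabilities $p_{ij}$ differ across the edges inside such a block. A single-edge analysis therefore does not describe the algorithm, and a naive block analysis with one step size per block is not allowed either. What is needed --- and what the paper isolates as Assumption~\ref{ass:separability} and verifies in Lemma~\ref{lemma:sep_qA_plus_f} --- is that the simultaneous update decouples across nodes: the cross-terms of $q_A$ between virtual edges of distinct nodes vanish, since $e_{k,j_k}^\top A^\top\Sigma A\, e_{\ell,j_\ell}=\mu_{k,j_k}\mu_{\ell,j_\ell}(u_k-u_{k,j_k})^\top\Sigma(u_\ell-u_{\ell,j_\ell})=0$ for $k\neq\ell$ because $\Sigma$ is supported on communication nodes, while $f_{\rm sum}^*$ and the $\phi_{ij}$ are edge-separable; hence the block update equals the superposition of $n$ independent single-edge updates with their own $p_{ij}$ and $L_\rel^{ij}$. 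Without this verification (and the corresponding monotonicity argument of Lemma~\ref{lemma:monotonicity}), the contraction factor $1-\alpha\eta/2$ you import from the appendix is not justified for Algorithm~\ref{algo:DVR}. A second, smaller caveat: you leave the relative strong convexity bound $\nabla^2(q_A+f^*_{\rm sum})\succcurlyeq \frac{\alpha}{2}\nabla^2\Phi$ as a sketched Schur-complement claim; this is precisely the step the paper handles through the splitting of Equation~\eqref{eq:split_qa_fsum} together with an imported spectral inequality \citep[Lemma 6.5]{hendrikx2020optimal}, so your proof is incomplete without either that import or an explicit computation of that inequality on the relevant subspace.
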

\begin{proof}[Proof sketch]
We have seen in Section~\ref{sec:algo_design} that DVR is obtained by applying Bregman coordinate descent on a well-chosen dual problem. Therefore, one of our key results consists in proving convergence rates for Bregman coordinate descent. In order to ease the reading of the paper, we present these results for a general setting in Appendix~\ref{app:bregman_coordinate_gradient}, which is self-contained and which we believe to be of independent interest (beyond its application to decentralized optimization). 

Then, Appendix~\ref{app:application_augmented_problem} focuses on the application to decentralized optimization. In particular, we recall the Equivalence between DVR and Bregman coordinate descent applied to the dual problem of Equation~\eqref{eq:dual_problem}, and show that its structure is suited to the application of coordinate descent. Indeed, no two virtual edges adjacent to the same node are updated at the same time with our sampling. Then, we evaluate the relative smoothness and strong convexity constants of the augmented problem, which allows to derive adequate values for parameters $\alpha$ and $\eta$. Finally, we choose $p_\comm$ in order to minimize the execution time of DVR.  
\end{proof} 

We would like to highlight the fact that the convergence theory of DVR decomposes nicely into several building blocks, and thus simple rates are obtained. This is not so usual for decentralized algorithms, for instance many follow-up papers were needed to obtain a tight convergence theory for EXTRA~\citep{shi2015extra, jakovetic2018unification, xu2020distributed, li2020revisiting}. We now discuss the convergence rate of DVR more in details. 

\textbf{Computation complexity.} The computation complexity of DVR is the same computation complexity as locally running a stochastic algorithm with variance reduction at each node. This is not surprising since, as we argue later, DVR can be understood as a decentralized version of an algorithm that is closely related to dual-free SDCA~\citep{shalev2016sdca}. Therefore, this improves the computation complexity of EXTRA from $O(m( \kappa_b + \gamma^{-1}))$ individual gradients to $O(m + \kappa_s)$, which is the expected improvement for stochastic variance-reduced algorithm. In comparison, GT-SAGA~\citep{xin2020decentralized}, a recent decentralized stochastic algorithm, has a computation complexity of order $O(m + \kappa_s^2 / \gamma^2)$, which is significantly worse than that of DVR, and generally worse than that of EXTRA as well. 

\textbf{Communication complexity.} The communication complexity of DVR (\emph{i.e.}, the number of communications, so the communication time is retrieved by multiplying by $\tau$) is of order $O(\kappa_\comm / \gamma)$, and can be improved to $O(\kappa_\comm / \sqrt{\gamma})$ using Chebyshev acceleration (see Section~\ref{sec:acceleration}). Yet, this is in general worse than the $O(\kappa_b + \gamma^{-1})$ communication complexity of EXTRA or NIDS, which can be interpreted as a partly accelerated communication complexity since the optimal dependence is $O(\sqrt{\kappa_b / \gamma})$~\citep{scaman2019optimal}, and $2\sqrt{\kappa_b / \gamma} = \kappa_b + \gamma
^{-1}$ in the worst case ($\kappa_b = \gamma^{-1}$). Yet, stochastic updates are mainly intended to deal with cases in which the computation time dominates, and we show in the experimental section that DVR outperforms EXTRA and NIDS for a wide range of communication times $\tau$ (the computation complexity dominates roughly as long as $\tau < \sqrt{\gamma}(m + \kappa_s) / \kappa_\comm)$. Finally, the communication complexity of DVR is significantly lower than that of DSA and GT-SAGA, the primal decentralized stochastic alternatives presented in Section~\ref{sec:introduction}.

\textbf{Homogeneous parameter choice.} In the homogeneous case ($\sigma_i = \sigma_j$ for all $i,j$), choosing the optimal $p_\comp$ and $p_\comm$ described above leads to $\eta \lambda_{\max}(W) = \sigma p_\comm$. Therefore, the communication update becomes $\theta_{t+1} = \left(I - W / \lambda_{\max}(W)\right)\theta_t$, which is a gossip update with a standard step-size (independent of the optimization parameters). Similarly, $\alpha \eta(m + \kappa_s) = p_\comp$, and so the step-size for the computation updates is independent of the network.

\textbf{Links with SDCA.}
The single-machine version of Algorithm~\ref{algo:DVR} ($n=1$, $p_\comm = 0$) is closely related to dual-free SDCA~\citep{shalev2016sdca}. The difference is in the stochastic gradient used: DVR uses $\nabla f_{ij}(z_t^{(ij)})$, where $z_t^{(ij)}$ is a convex combination of $\theta_k^{(i)}$ for $k<t$, whereas dual-free SDCA uses $g_t^{(ij)}$, which is a convex combination of $\nabla f_{ij}(\theta_k^{(i)})$ for $k<t$. Both algorithms obtain the same rates. 

\textbf{Local synchrony.}
Instead of using the synchronous communications of Algorithm~\ref{algo:DVR}, it is possible to update edges one at a time, as in~\citet{hendrikx2019accelerated}.
This can be very efficient in heterogeneous settings (both in terms of computation and communication times) and similar convergence results can be obtained using the same framework, and we leave the details for future work.

\section{Acceleration}
\label{sec:acceleration}
We show in this section how to modify DVR to improve the convergence rate of Theorem~\ref{thm:dvr_rate}.

\textbf{Network acceleration.} 
Algorithm~\ref{algo:DVR} depends on $\gamma^{-1}$, also called the \emph{mixing time} of the graph, which can be as high as $O(n^2)$ for a chain of length $n$~\citep{mohar1997some}. However, it is possible to improve this dependency to $\gamma^{-1/2}$ by using Chebyshev acceleration, as in~\citet{scaman2017optimal}. To do so, the first step is to choose a polynomial $P$ of degree $k$ and communicate with $P(W)$ instead of $W$. In terms of implementation, this comes down to performing $k$ communication rounds instead of one, but this makes the algorithm depend on the spectral gap of $P(W)$. Then, the important fact is that there is a polynomial $P_{\gamma}$ of degree $\lceil \gamma^{-1/2} \rceil$ such that the spectral gap of $P_\gamma(W)$ is of order $1$. Each communication step with $P_\gamma(W)$ only takes time $\tau {\rm deg}(P_\gamma) = \tau \lceil\gamma^{-1/2}\rceil$, and so the communication term in Theorem~\ref{thm:dvr_rate} can be replaced by $\tau \kappa_\comm \gamma^{-1/2}$, thus leading to \emph{network acceleration}. The polynomial $P_\gamma$ can for example be chosen as a Chebyshev polynomial, and we refer the interested reader to~\citet{scaman2017optimal} for more details. Finally, other polynomials yield even faster convergence when the graph topology is known~\citep{berthier2020accelerated}.

\textbf{Catalyst acceleration.}
Catalyst~\citep{lin2015universal} is a generic framework that achieves acceleration by solving a sequence of subproblems. Because of space limitations, we only present the accelerated convergence rate without specifying the algorithm in the main text. Yet, only mild modifications to Algorithm~\ref{algo:DVR} are required to obtain these rates, and the detailed derivations and proofs are presented in Appendix~\ref{app:catalyst}.

\begin{theorem} \label{thm:rate_dvr_catalyst}
DVR can be accelerated using catalyst, so that the time $T_\varepsilon$ required to reach precision $\varepsilon$ is equal (up to log factors) to
$T_\varepsilon = \tilde{O}\big(\big[m + \sqrt{m\kappa_s} + \tau \sqrt{\kappa_\comm / \gamma} \sqrt{m \kappa_\comm / \kappa_s}\big]\log\varepsilon^{-1}\big)
$.
\end{theorem}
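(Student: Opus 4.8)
The plan is to apply the Catalyst framework of \citet{lin2015universal} with DVR, run with Chebyshev acceleration as described above, as the inner solver. First I would describe the outer loop: at iteration $k$ of Catalyst the subproblem is $\min_x F(x) + \frac{\kappa_C}{2}\|x - y_k\|^2$ for a suitably extrapolated point $y_k$, which is again an instance of~\eqref{eq:general_problem} in which the penalty is distributed over the nodes (so that node $i$ sees regularization $\sigma_i + \kappa_C/n$ and the decentralized structure is preserved). From \citet{lin2015universal}, reaching precision $\varepsilon$ on the $\sigma_F$-strongly convex function $F$ (with $\sigma_F = \sum_i \sigma_i$) requires $\tilde O\big(\sqrt{1 + \kappa_C/\sigma_F}\,\log\varepsilon^{-1}\big)$ outer iterations, each solving one subproblem to a geometrically decreasing accuracy; with warm starts this costs, per outer iteration, a number of DVR iterations that is $\tilde O$ of the inverse rate of DVR on the subproblem, the tilde absorbing a logarithmic factor in the condition number.

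Second, I would quantify how DVR's complexity on the regularized subproblem depends on $\kappa_C$. Write $\kappa_s(\kappa_C) = \max_i (1 + \sum_j L_{ij})/(\sigma_i + \kappa_C/n)$ and $\kappa_\comm(\kappa_C)$ for the stochastic and communication condition numbers of the shifted problem. Combining Theorem~\ref{thm:dvr_rate} with Chebyshev acceleration, the time to solve one subproblem is $\tilde O\big(m + \kappa_s(\kappa_C) + \tau\,\kappa_\comm(\kappa_C)/\sqrt{\gamma}\big)$. The estimates I need are that shifting $\sigma_i \to \sigma_i + \kappa_C/n$ shrinks both condition numbers by essentially the same factor, namely $\kappa_s(\kappa_C) = \Theta\big(1 + \kappa_s/(1 + \kappa_C/(n\sigma))\big)$ and $\kappa_\comm(\kappa_C) = \Theta\big(1 + \kappa_\comm/(1 + \kappa_C/(n\sigma))\big)$ (with $\sigma$ a representative regularization); the bound on $\kappa_\comm(\kappa_C)$ follows from two-sided spectral bounds on $\lambda_{\max}(A_\comm^\top \Sigma_\comm A_\comm)$ and $\lambda_{\min}^+(A_\comm^\top D_M^{-1} A_\comm)$ after replacing $\Sigma_\comm$ and $D_M$ by their $\kappa_C$-shifted counterparts.

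Third, I would multiply the outer iteration count by the per-subproblem time and minimize over $\kappa_C \ge 0$. For the computation part, $\sqrt{1 + \kappa_C/\sigma_F}\cdot\big(m + \kappa_s(\kappa_C)\big)$ is, up to constants, smallest when $\kappa_C$ is chosen so that $\kappa_s(\kappa_C) = \Theta(m)$, i.e. $1 + \kappa_C/(n\sigma) = \Theta(\kappa_s/m)$, giving $m + \sqrt{m\kappa_s}$ (the leading $m$ covering the regime $\kappa_s \le m$ in which Catalyst brings nothing). For the communication part, the same $\kappa_C$ gives $\kappa_\comm(\kappa_C) = \Theta(1 + m\kappa_\comm/\kappa_s)$, hence a contribution $\sqrt{\kappa_s/m}\cdot \tau\,\kappa_\comm(\kappa_C)/\sqrt{\gamma} = \tau\sqrt{\kappa_\comm/\gamma}\,\sqrt{m\kappa_\comm/\kappa_s}$. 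Adding the two terms and the logarithmic overhead hidden in $\tilde O$ yields the stated bound.

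The step I expect to be the main obstacle is controlling $\kappa_\comm(\kappa_C)$: unlike $\kappa_s$, it is a ratio of extremal eigenvalues of matrices that depend on $\kappa_C$ through the inverses $\Sigma_\comm$ and $D_M^{-1}$, so it is not affine in $\kappa_C$ and requires careful monotonicity and spectral arguments (and, in the heterogeneous case, a defensible choice of how to split the Catalyst penalty across nodes). A secondary technical point is the interface with Catalyst: Theorem~\ref{thm:dvr_rate} is stated for $\sum_i\|\theta_t^{(i)} - \theta^\star\|^2$ and DVR is randomized, so one must convert this into the functional inner-accuracy criterion required by \citet{lin2015universal} and carry the guarantees in expectation, which is precisely where the extra logarithmic factors folded into $\tilde O$ arise.
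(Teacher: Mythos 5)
Your outer skeleton is exactly the paper's: Catalyst with the proximal penalty folded into the local regularization (node $i$ sees $\sigma_i+\beta$ with $\beta=\kappa_C/n$), DVR with Chebyshev acceleration as inner solver, the shifted condition numbers $\kappa_s^\beta=\Theta(1+\kappa_s/(1+\beta/\sigma))$ and $\kappa_\comm^\beta=\Theta(1+\kappa_\comm/(1+\beta/\sigma))$, the choice $\sigma+\beta\approx L_s/m$ (i.e.\ $\kappa_s^\beta=\Theta(m)$), and the final arithmetic $\sqrt{\kappa_s/m}\,(m+\tau(1+m\kappa_\comm/\kappa_s)/\sqrt{\gamma})$, which matches the paper's computation term by term (the paper even sidesteps your main worry about $\kappa_\comm^\beta$ by assuming $\sigma_i=\sigma$ in this section, so the shift only rescales scalars in $\Sigma_\comm$ and $D_M$). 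The genuine gap is the step you label secondary: the claim that "with warm starts" each subproblem costs $\tilde O(1/(\eta\alpha))$ inner iterations, with only a log factor absorbed, by invoking \citet{lin2015universal}. The generic Catalyst warm-start analysis assumes the inner solver converges linearly in primal function values from the warm-start point; Theorem~\ref{thm:dvr_rate} instead gives a rate for the dual Lyapunov function $\frac{p_{\min}}{\eta}D_\phi(\lambda_\star,\lambda_0)+D(\lambda_\star)-D(\lambda_0)$, and when $\omega_t$ changes both the dual function and its optimizer move, so a good primal warm start does not automatically give a small initial dual error for the next subproblem. The paper's Appendix~\ref{app:catalyst} spends most of its length on exactly this: a recursion maintaining three invariants (node parameters, virtual parameters $z^{(ij)}$, dual gap), with explicit warm-start bounds on the dual gap (Lemma~\ref{lemma:warm_start_dual}), on the Bregman divergence of the dual variables (Lemma~\ref{lemma:warm_start_breg}), and on the virtual parameters (Lemmas~\ref{lemma:warm_start_theta_comp} and~\ref{lemma:virtual_error_after_inner}); the last needs a separate convex-combination argument precisely because $f_{ij}^*$ need not be smooth, so dual control alone does not control $z^{(ij)}$. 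Without some version of these estimates your per-outer-iteration cost claim is unjustified (the naive alternative costs an extra $\log\varepsilon^{-1}$ per outer loop and breaks the stated rate).

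A second, smaller missing idea concerns the prox center itself: you write the subproblem with a single extrapolated point $y_k$, but in the decentralized setting no node knows the global iterate, so each node can only center its penalty at its own $\omega_t^{(i)}$. The paper resolves this by noting $\sum_i\|\theta-\omega_t^{(i)}\|^2=n\|\theta-\bar\omega_t\|^2+\mathrm{const}$ and that the extrapolation is linear, so Catalyst is conceptually applied to the never-computed averages $\bar\theta_t,\bar\omega_t$ while being implemented with purely local updates; also, the inner accuracy criterion is then checked for $\bar\theta_t$ via smoothness, $F_t(\bar\theta_t)-F_t(\theta_\star^t)\le\frac{L}{2n}\sum_i\|\theta_t^{(i)}-\theta_\star^t\|^2$, using Lemma~\ref{lemma:primal_error}. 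Your plan as stated does not account for this, and the subproblem with a common $y_k$ is not implementable as written.
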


This rate recovers the computation complexity of optimal finite sum algorithms such as 
ADFS~\citep{hendrikx2019accelerated, hendrikx2020optimal}. Although the communication time is slightly increased (by a factor $\sqrt{m \kappa_\comm / \kappa_s}$), ADFS uses a stronger oracle than DVR (proximal operator instead of gradient), which is why we develop DVR in the first place. Although both ADFS and DVR are derived using the same dual formulation, both the approach and the resulting algorithms are rather different: ADFS uses accelerated coordinate descent, and thus has strong convergence guarantees at the cost of requiring dual oracles. DVR uses coordinate descent with the Bregman divergence of $\phi_{ij} \propto f_{ij}^*$ in order to work with primal oracles, but thus loses direct acceleration, which is recovered through the Catalyst framework. Note that the parameters of accelerated DVR can also be set such that $T_\varepsilon = \tilde{O}\left(\sqrt{\kappa_\comm}\left[m + \tau / \sqrt{\gamma}\right]\log\varepsilon^{-1}\right)$, which recovers the convergence rate of optimal batch algorithms, but loses the finite-sum speedup. 

\section{Experiments} 
\label{sec:experiments}
We investigate in this section the practical performances of DVR. We solve a regularized logistic regression problem on the RCV1 dataset~\citep{lewis2004rcv1} ($d=47236$) with $n=81$ (leading to $m=2430$) and two different graph topologies: an Erd\H{o}s-R\'enyi random graph (see, \emph{e.g.},~\citep{bollobas2001random}) and a grid. We choose $\mu_{k\ell}^2 = 1/2$ for all communication edges, so the gossip matrix $W$ is the Laplacian of the graph. 

\begin{figure}
\subfigure[Erd\H{o}s-R\'enyi, $\sigma=m \cdot 10^{-5}$
]{
    \includegraphics[width=0.32\linewidth]{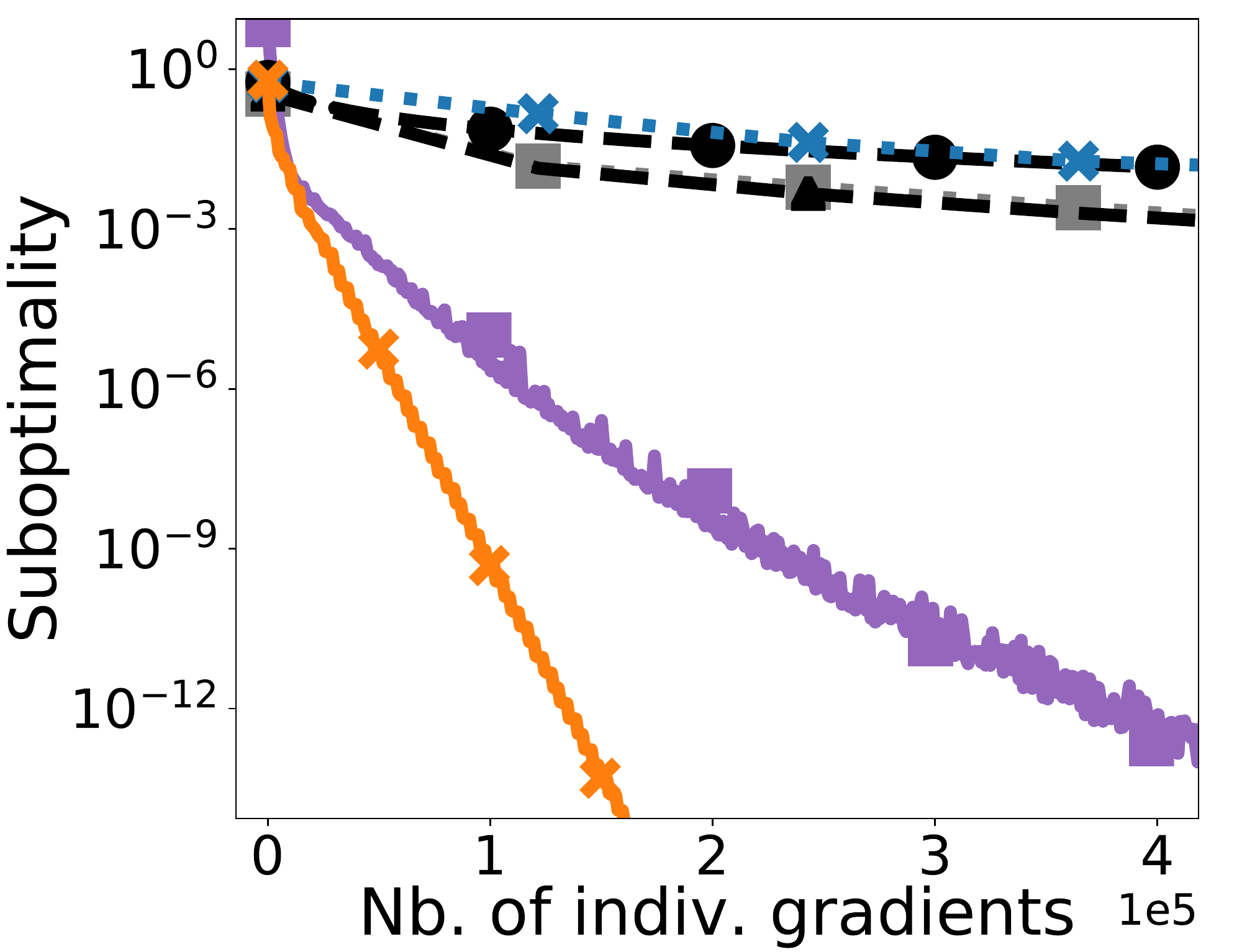} \includegraphics[width=0.32\linewidth]{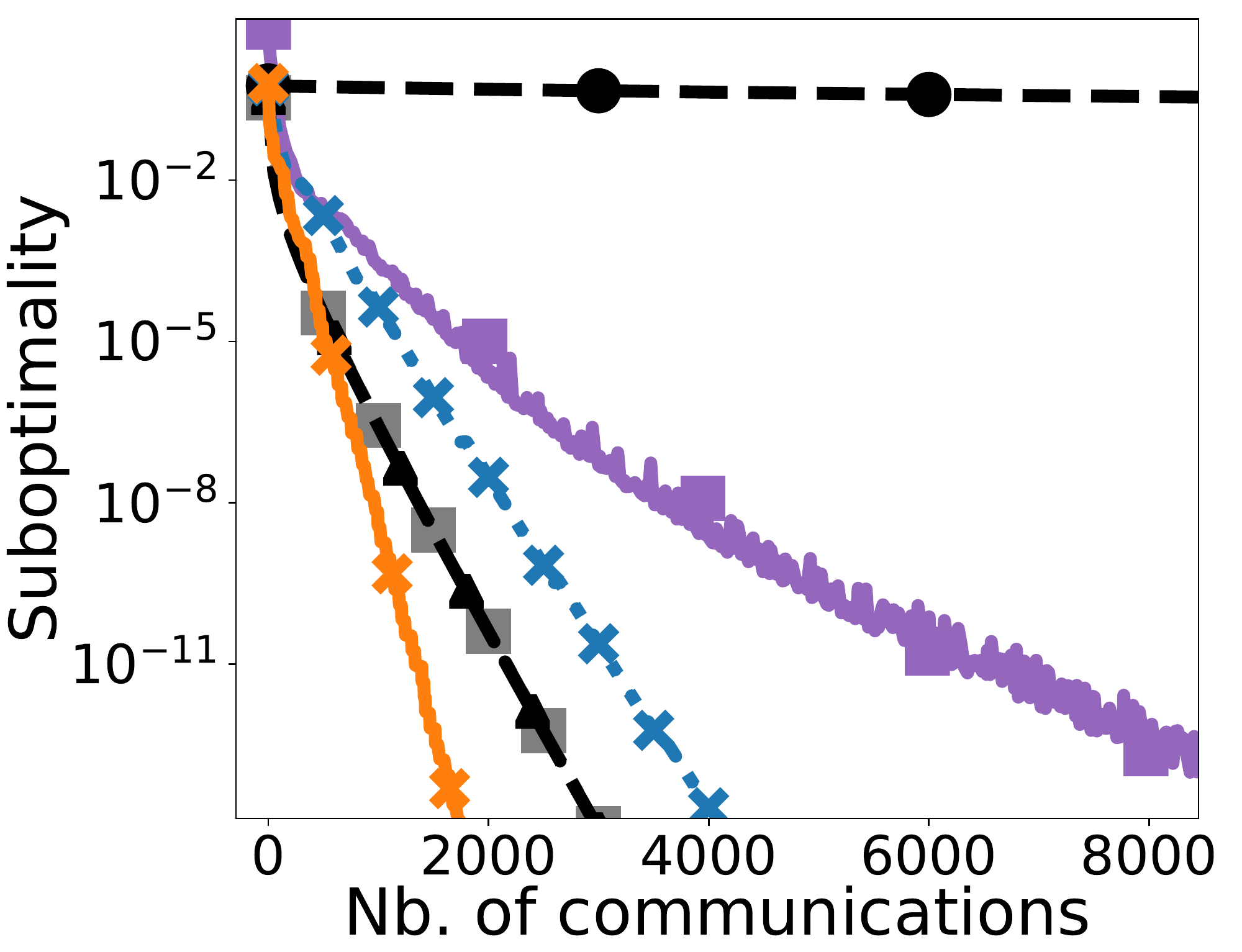}
    \includegraphics[width=0.32\linewidth]{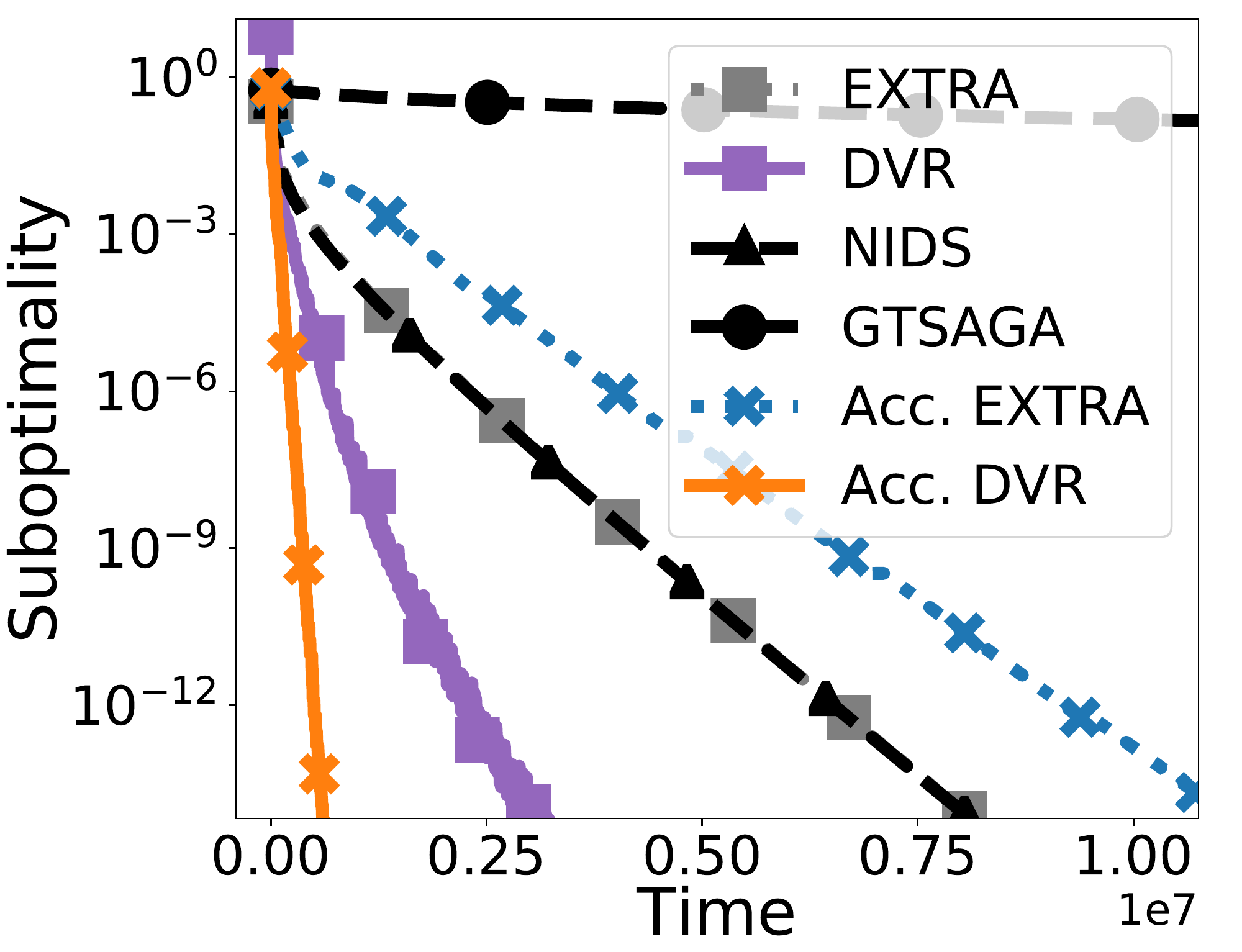}
    \label{fig:erdos_renyi}
}\\
\subfigure[Grid, $\sigma=m \cdot 10^{-5}$
]{
    \includegraphics[width=0.31\linewidth]{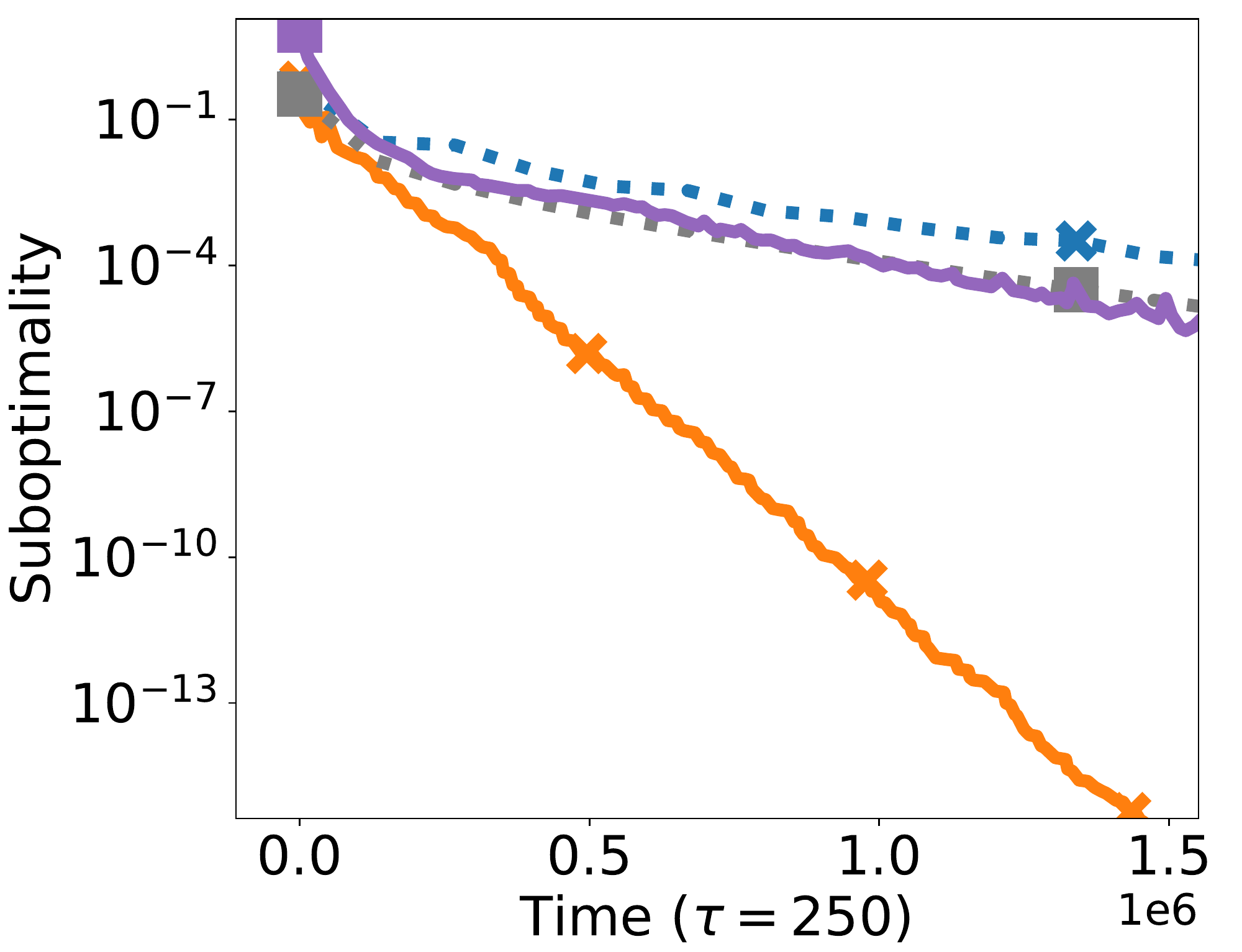}
    \label{fig:grid_c5}
}
\subfigure[Erd\H{o}s-R\'enyi, $\sigma=m \cdot 10^{-7}$]{
    \includegraphics[width=0.31\linewidth]{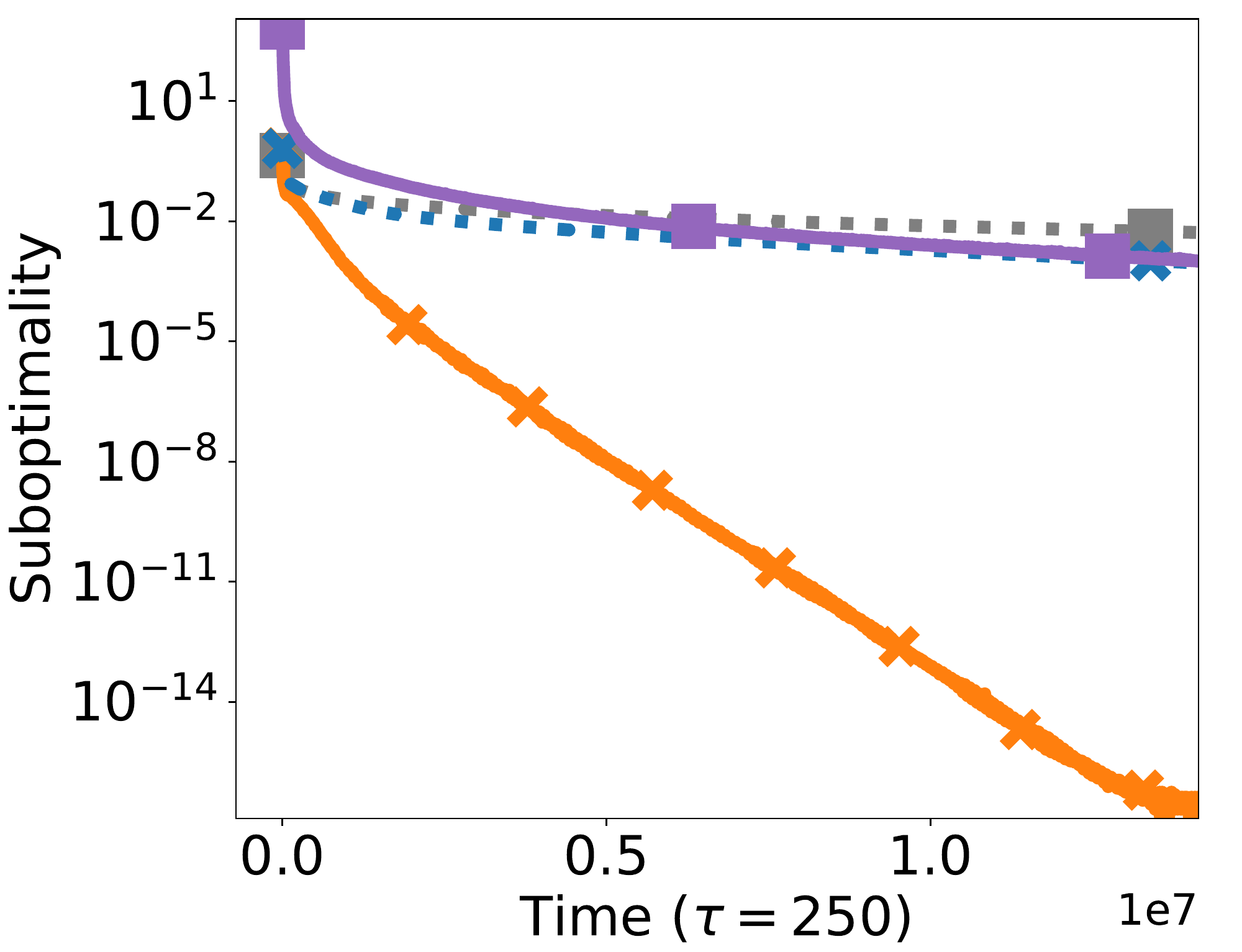}
    \label{fig:grid_c7}
}
\subfigure[Grid, $\sigma=m \cdot 10^{-7}$]{
    \includegraphics[width=0.31\linewidth]{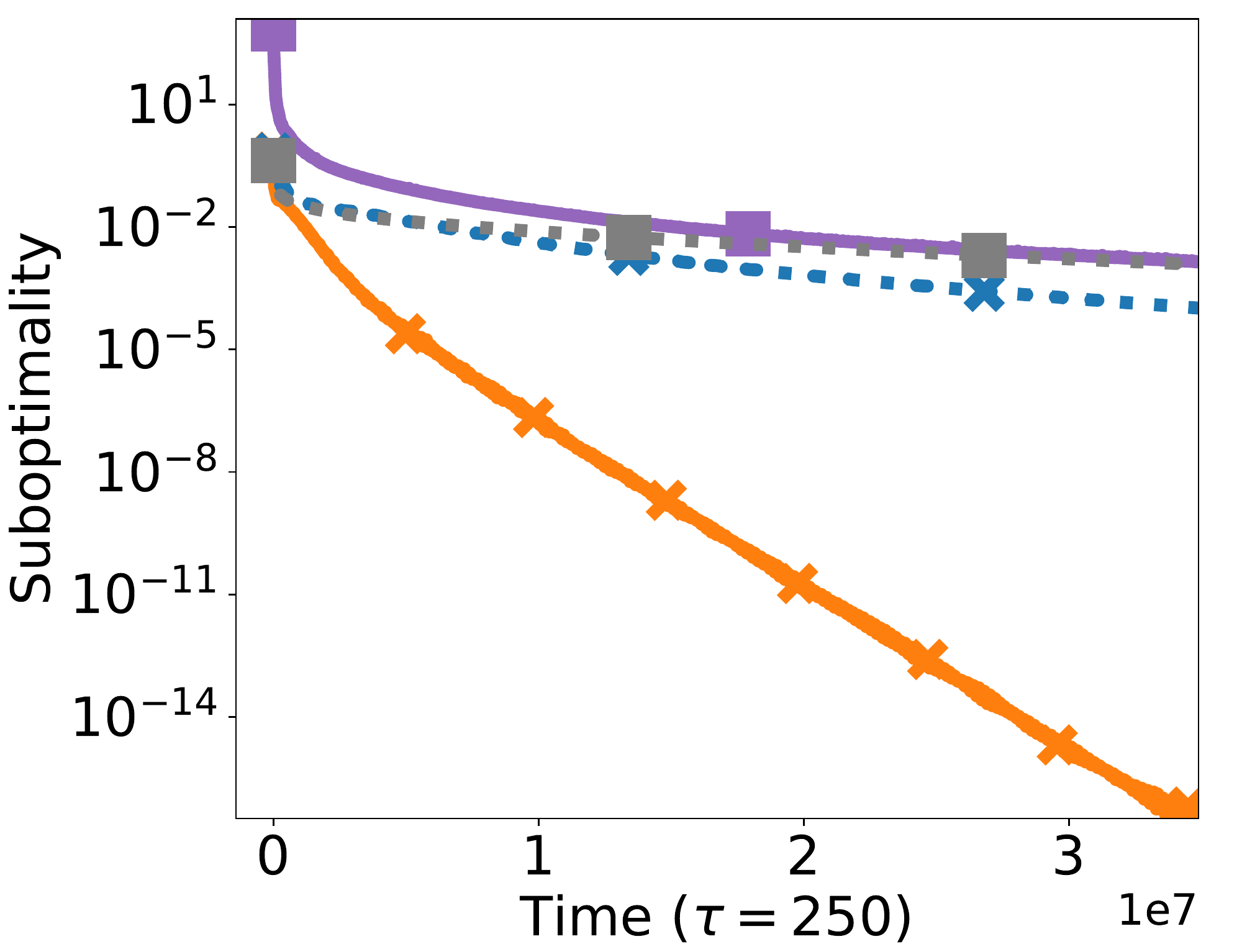}
    \label{fig:er_c7}
}
\caption{Experimental results for the RCV1 dataset with different graphs of size $n=81$, with $m=2430$ samples per node, and with different regularization parameters. \label{fig:plots}}
\end{figure}

Figure~\ref{fig:plots} compares the performance of DVR with that of state-of-the-art primal algorithms such as EXTRA~\citep{shi2015extra}, NIDS~\citep{li2019decentralized}, GT-SAGA~\citep{xin2020decentralized}, and Catalyst accelerated versions of EXTRA~\citep{li2020revisiting} and DVR. Suboptimality refers to $F(\theta_t^{(0)}) - F(\theta^\star)$, where node $0$ is chosen arbitrarily and $F(\theta^\star)$ is approximated by the minimal error over all iterations. Each subplot of Figure~\ref{fig:erdos_renyi} shows the same run with different x axes. The left plot measures the complexity in terms of individual gradients ($\nabla f_{ij}$) computed by each node whereas the center plot measures it in terms of communications (multiplications by $W$). All other plots are taken with respect to (simulated) time (\emph{i.e.}, computing $\nabla f_{ij}$ takes time $1$ and multiplying by $W$ takes time $\tau$) with $\tau = 250$ in order to report results that are independent of the computing cluster hardware and status. All parameters are chosen according to theory, except for the smoothness of the $f_i$, which requires finding the smallest eigenvalue of a $d \times d$ matrix. For this, we start with $L_b = \sigma_i + \sum_{j=1}^m L_{ij}$ (which is a known upper bound), and decrease it while convergence is ensured, leading to $\kappa_b = 0.01 \kappa_s$. The parameters for accelerated EXTRA are chosen as in~\citet{li2020revisiting} since tuning the number of inner iterations does not significantly improve the results (at the cost of a high tuning effort). For accelerated DVR, we set the number of inner iterations to $N / p_\comp$ (one pass over the local dataset).
We use Chebyshev acceleration for (accelerated) DVR but not for (accelerated) EXTRA since it is actually slower, as predicted by the theory.

As expected from their theoretical iteration complexities, NIDS and EXTRA perform very similarly~\citet{li2020revisiting}, and GT-SAGA is the slowest method. Therefore, we only plot NIDS and GT-SAGA in Figure~\ref{fig:erdos_renyi}. We then see that though it requires more communications, DVR has a much lower computation complexity than EXTRA, which illustrates the benefits of stochastic methods. We see that DVR is faster overall if we choose $\tau=250$, and both methods perform similarly for $\tau \approx 1000$, at which point communicating takes roughly as much time as computing a full local gradient. We then see that accelerated EXTRA has quite a lot of overhead and, despite our tuning efforts, is slower than EXTRA when the regularization is rather high. On the other hand, accelerated DVR consistently outperforms DVR by a relatively large margin. The communication complexity is in particular greatly improved, allowing accelerated DVR to be the fastest method regardless of the setting.
Further experimental results are given in Appendix~\ref{app:experiments}, and the code is available in supplementary material.

\section{Conclusion}
This paper introduces DVR, a Decentralized stochastic algorithm with Variance Reduction obtained using Bregman block coordinate descent on a well-chosen dual formulation. Thanks to this approach, DVR inherits from the fast rates and simple theory of dual approaches without the computational burden of relying on dual oracles. Therefore, DVR has a drastically lower computational cost than standard primal decentralized algorithms, although sometimes at the cost of a slight increase in communication complexity. The framework used to derive DVR is rather general and could in particular be extended to analyze asynchronous algorithms. Finally, although deriving a direct acceleration of DVR is a challenging open problem, Catalyst and Chebyshev accelerations allow to significantly reduce DVR's communication overhead both in theory and in practice. 

\section*{Acknowledgements}

This work was funded in part by the French government under management of Agence Nationale de la Recherche as part of the ``Investissements d'avenir'' program, reference ANR-19-P3IA-0001 (PRAIRIE 3IA Institute). We also acknowledge support from the European Research Council (grant SEQUOIA 724063) and from the MSR-INRIA joint centre. 

\bibliographystyle{plainnat}
\bibliography{biblio}

\newpage

\appendix

This appendix contains the details of the derivations and proofs from the main text. More specifically, Appendix~\ref{app:bregman_coordinate_gradient} is a self-contained appendix that specifies the Bregman coordinate descent algorithm and proves its convergence rate. Appendix~\ref{app:application_augmented_problem} focuses on the application of Bregman coordinate descent to the dual problem (relative smoothness and strong convexity constants, sparsity structure), and how to retrieve guarantees on the primal parameters. Appendix~\ref{app:catalyst} is devoted to presenting the Catalyst acceleration of DVR and proving its convergence speed, and Appendix~\ref{app:experiments} details the experimental setting, along with more experiments. 

\section{Block Coordinate descent}
\label{app:bregman_coordinate_gradient}
We focus in this section on the general problem minimizing $f + g$ using coordinate Bregman gradient, where $g$ is separable, \emph{i.e.}, $g(x) = \sum_{i=1}^d g_i(x^{(i)})$. This is a self-contained section, and notations may differ from the rest of the paper. In particular, function $f$ is for now arbitrary and not related to $F$ or $f_i$ from Problem~\eqref{eq:general_problem}, and the dimension $d$ is arbitrary as well.

We first precise the blocks sampling rule. More specifically, we define a block $b \subset \{1, \dots, d\}$ as a collection of coordinates, and $\cB$ is the set of all blocks that can be chosen for the updates. Then, the algorithm updates each block $b \in \cB$ with probability $p(b)$, so that the probability of updating a given coordinate is given by $p_i = \sum_{i \in b} p(b)$. Similarly to individual coordinates, we write $x^{(b)}$ the restriction of $x$ to coordinates in $b$. The Bregman coordinate gradient update for a block of coordinates $b$ writes:
\begin{equation}\label{eq:coord_breg_constrained}
    x_{t+1} = \arg \min_{x \in \R^d} \left\{V_t^b(x) \triangleq \sum_{i \in b} \frac{\eta_t}{p_i} \left[\nabla_i f(x_t)^\top x  + g_i(x^{(i)})\right] + D_{\phi}(x, x_t)\right\},
\end{equation}
where $\nabla_i f$ denotes the gradient of $f$ in direction $i$. Note that this update is more general than the one used to derive DVR, for which $g = 0$.
In order to derive strong guarantees for this block coordinate descent algorithm, we need to ensure that there is some separability in functions $f$ and $\phi$, and that the block structure is suited to this separability. All the assumptions about the separability structure of $f$, $g$ and $\phi$ are contained in the following assumption.
\begin{assumption}[Separability]\label{ass:separability}
The function $g$ is separable and the function $\phi$ is block-separable for $b$, meaning that for all $b \in \cB$, there exist two convex functions $\phi_b$ and $\phi_b^\perp$ such that for all $x$,
\begin{equation} \label{eq:block_separability_phi}
    \phi(x) = \phi_b(x^{(b)}) + \phi_b^\perp(x - x^{(b)}).
\end{equation}
Besides, for all $b \in \cB$, either of the following two hold:
\begin{enumerate}
    \item $\phi$ and $f$ are separable for $b$, \emph{i.e.}, $\phi_b(x^{(b)}) = \sum_{i \in b} \phi_i(x^{(i)})$, and \[\sum_{i \in b} \left[f(x_t + \delta_i e_i) - f(x_t)\right] = f\left(x_t + \sum_{i\in b} \delta_i e_i \right) - f(x_t).\]
    \item $p_i = p_j$ for all $i,j \in b$.
\end{enumerate}
\end{assumption}

If $\phi$ is not block-separable, the support of the Bregman update in direction $b$ may not restricted to $b$. This causes some of the derivations below to fail, which is why we prevent it by assuming that Equation~\eqref{eq:block_separability_phi} holds.

Then, the first option ensures that within a block, the updates do not affect each other. The function $f$ is not separable, but some directions can be updated independently from others. To have these independent updates, we also need to assume further separability of $\phi$ within the blocks. The second option states that if only block-separability of $\phi$ is assumed then within each block for which $\phi$ and $f$ are not separable, coordinates must be picked with the same probability.

Assumption~\ref{ass:separability} is a bit technical but we actually require all statements in order to derive DVR. In particular, the first option is verified when updating within the same block virtual edges that are adjacent to different nodes in the dual problem. The second option is verified when picking all communication edges at once within the same block.

Now that we have made assumptions on the structure of $f$, $g$ and $\phi$, we will make assumptions on their regularity. We start by a directional relative smoothness assumption between $f$ and $\phi$, \emph{i.e.}, we assume that for all $i$, there exists $L_\rel^i$ such that for all $\delta > 0$ and $e_i$ the unit vector of direction $i$,
\begin{equation} \label{eq:relative_smoothness}
    D_f(x + \delta e_i, x) \leq L_\rel^i D_\phi(x + \delta e_i, x).
\end{equation}
Similarly, for $\sigma_\rel > 0$, $f$ is said to be $\sigma_\rel$-strongly convex relatively to $\phi$ if for all $x, y$:
\begin{equation} \label{eq:relative_strong_convexity}
    D_f(x, y) \geq \sigma_\rel D_\phi(x, y).
\end{equation}
We finally assume that $f$ and $\phi$ are convex (but not necessarily smooth). We can now state the central theorem of this section:
\begin{theorem} \label{thm:bregman_coordinate}
Let $f$ and $\phi$ be such that $f$ is $L_\rel^i$-smooth in direction $i$ and $\sigma_\rel$-strongly convex relatively to $\phi$. Denote $p_{\min} = \min_i p_i$, and 
\[L_t = D_\phi(x, x_{t}) + \frac{\eta_t}{p_{\min}}\left(F(x_{t}) - F(x)\right).\]
Then, if the blocks $\cB$ respect Assumption~\ref{ass:separability} (separability) and $\eta_t L_\rel^i < p_i$ for all $i$, the Bregman coordinate descent algorithm guarantees for all $x$:
\begin{align*}
   \esp{L_{t+1}} \leq (1 - \eta_t \sigma_\rel) L_t.
\end{align*}
The same result holds with $L^\prime_t = D_\phi(x, x_{t}) + \frac{1}{L_\rel^{\max}}\left(F(x_{t}) - F(x)\right)$, where $L_\rel^{\max} = \max_i L_\rel^i$.
\end{theorem}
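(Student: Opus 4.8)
The plan is to run a relative-smoothness Lyapunov argument, carefully threaded through the block sampling so that the expectation over $b\in\cB$ commutes with the (coordinate-dependent) update. First I would reduce everything to a single conditional block step. Since $\phi$ is block-separable (Equation~\eqref{eq:block_separability_phi}) and the linear and $g$ terms of $V_t^b$ involve only $x^{(b)}$, the unconstrained minimizer $x_{t+1}$ of $V_t^b$ over $\R^d$ agrees with $x_t$ off the block $b$; the update is therefore genuinely block-restricted and $D_\phi(x_{t+1},x_t)$ depends only on $\phi_b$. Writing $\ell_b(z)=\sum_{i\in b}\frac{\eta_t}{p_i}\big[\nabla_i f(x_t)^\top z+g_i(z^{(i)})\big]$, which is convex, the first-order optimality of $x_{t+1}$ together with convexity of $\phi$ yields the three-point inequality $\ell_b(x_{t+1})+D_\phi(x,x_{t+1})+D_\phi(x_{t+1},x_t)\le \ell_b(x)+D_\phi(x,x_t)$, valid for every $x\in\R^d$.

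The second step turns $\ell_b(x_{t+1})$ into a genuine decrease of $F=f+g$. Splitting $\nabla_i f(x_t)^\top(x_{t+1}-x)=\nabla_i f(x_t)^\top(x_{t+1}-x_t)+\nabla_i f(x_t)^\top(x_t-x)$, I would bound $f(x_{t+1})-f(x_t)$ from above using the directional relative smoothness~\eqref{eq:relative_smoothness}, bound $\langle\nabla f(x_t),x_t-x\rangle\le f(x_t)-f(x)-\sigma_\rel D_\phi(x,x_t)$ using relative strong convexity~\eqref{eq:relative_strong_convexity}, and handle the $g$ terms by convexity of the separable $g$. Here Assumption~\ref{ass:separability} is exactly what makes the lifting of~\eqref{eq:relative_smoothness} from single coordinates to the block $b$ go through: in the first case, separability of $f$ over $b$ and of $\phi_b$ gives $f(x_{t+1})-f(x_t)=\sum_{i\in b}[f(x_t+\delta_i e_i)-f(x_t)]\le\sum_{i\in b}[\nabla_i f(x_t)\delta_i+L_\rel^i D_{\phi_i}(x_{t+1}^{(i)},x_t^{(i)})]$ with $\sum_{i\in b}D_{\phi_i}=D_\phi(x_{t+1},x_t)$; in the second case all $p_i$, $i\in b$, coincide, so the block collapses to a single update direction with a common weight and the per-coordinate constants merge into one block constant bounded by $\max_{i\in b}L_\rel^i$. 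In both cases the hypothesis $\eta_t L_\rel^i<p_i$ is precisely what keeps the coefficient of $D_\phi(x_{t+1},x_t)$ nonnegative once it is absorbed against the $D_\phi(x_{t+1},x_t)$ coming from the three-point inequality.

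Third, I would take the expectation over the sampled block $b$. By Assumption~\ref{ass:separability}, the contribution of each coordinate $i$ carries the factor $\one[i\in b]/p_i$, with $\esp{\one[i\in b]/p_i}=1$, and separability guarantees that the surviving $x_{t+1}$-dependence is the coordinate-wise one with nothing leaking across blocks; hence the expectation produces a term controlling $\esp{F(x_{t+1})}$ against $F(x_t)$ together with a $\sigma_\rel D_\phi(x,x_t)$ term. Normalizing the function-value term with the factor $\eta_t/p_{\min}$ (using $p_{\min}=\min_i p_i$ to bound all the weights uniformly) and rearranging gives $\esp{L_{t+1}}\le(1-\eta_t\sigma_\rel)L_t$. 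The variant with $L'_t$ and $L_\rel^{\max}$ follows from the identical computation, only normalizing the function-value term by $1/L_\rel^{\max}$ instead of $\eta_t/p_{\min}$.

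I expect the main obstacle to be steps two and three in the regime where only block-separability of $\phi$ is available (the second option of Assumption~\ref{ass:separability}): there $f$ need not decompose over the block, so $f(x_{t+1})-f(x_t)$ cannot be split coordinate by coordinate, and one must instead invoke the equal-probability hypothesis to treat the whole block as a single update unit — after which the block itself plays the role of a single ``coordinate'' and the expectation step has nothing to commute — while verifying that this still lands on the same final Lyapunov recursion as the fully separable case. Checking that Assumption~\ref{ass:separability} is exactly the structure required by the two block types used in DVR (a block of virtual edges adjacent to distinct nodes, and the single block gathering all communication edges) is what ties the general theorem to the decentralized application.
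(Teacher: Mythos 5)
Your overall route is the same as the paper's (optimality of $x_{t+1}$ giving a three-point inequality, per-coordinate relative smoothness to convert the linear term into $f(x_t+\delta_i)-f(x_t)$, relative strong convexity under the expectation over blocks), but there is a genuine gap at the step where you pass from the coordinate-weighted decrease to the Lyapunov function. After the smoothness step you are left with $\sum_{i\in b}\frac{\eta_t}{p_i}\bigl[F(x_t+\delta_i)-F(x_t)\bigr]$, and you propose to ``normalize the function-value term with the factor $\eta_t/p_{\min}$, using $p_{\min}=\min_i p_i$ to bound all the weights uniformly.'' That replacement of $1/p_i$ by $1/p_{\min}$ multiplies \emph{signed} quantities, so it is only valid in the needed direction if each per-coordinate change satisfies $F(x_t+\delta_i)-F(x_t)\le 0$. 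This monotonicity is not automatic: the paper proves it as a separate lemma (its Lemma~1), using the optimality of $x_{t+1}$ in $V_t^b$ together with the same step-size condition $\eta_t L_\rel^i\le p_i$ (per coordinate in the separable case, at the block level in the equal-probability case). Without this descent property your uniformization step simply fails whenever the $p_i$ within a block differ, which is exactly the regime relevant to DVR (the probabilities $p_{ij}$ of the virtual edges collected in one computation block are not equal across nodes). The same sign argument is also what lets the paper conclude, via $\sigma_\rel\le L_\rel^i$ and $\eta_t L_\rel^i\le p_i$, that $1-p_{\min}\le 1-\eta_t\sigma_\rel$ in the final rearrangement; you gloss over this, though it is an easy arithmetic point once the rest is in place.

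A secondary issue concerns your treatment of the second option of Assumption~1: you claim the per-coordinate constants ``merge into one block constant bounded by $\max_{i\in b}L_\rel^i$.'' Directional relative smoothness does not imply block relative smoothness with that constant (cross terms appear; e.g.\ $f(x)=\tfrac12(x_1+x_2)^2$ with $\phi=\tfrac12\|x\|^2$ has directional constant $1$ but block constant $2$). In the paper the non-separable block is treated as a single direction with its own constant — in the application this is $L_\rel^\comm=\lambda_{\max}(A_\comm^\top\Sigma_\comm A_\comm)$ for the whole communication block — so the hypothesis must be read (and used) at the block level there, not derived from coordinate-wise constants. With the monotonicity lemma added and the block-level smoothness constant used in the second case, your outline matches the paper's proof.
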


To prove this theorem, we start by proving the monotonicity of such iterations.
\begin{lemma}[Monotonicity]\label{lemma:monotonicity}
We note $\delta_i = e_i^\top(x_{t+1} - x_t) e_i$. If $x_{t+1} = \arg \min_x V_t^b(x)$ then:
\begin{enumerate}
    \item If $\phi$ and $f$ are separable for $b$ then for all $i \in b$, if $\eta_t L_\rel^i \leq p_i$ then $F(x_t) \geq F(x_t + \delta_i)$.
    \item If $p_i = p_j$ for all $i,j \in b$ and $\eta_t L_\rel^b \leq p_b$ then $F(x_t) \geq F(x_{t+1})$.
\end{enumerate}
\end{lemma}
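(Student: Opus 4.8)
The plan is to exploit the optimality condition of the proximal-type update~\eqref{eq:coord_breg_constrained} together with the relative smoothness bound~\eqref{eq:relative_smoothness}, much as in the standard proof that a proximal gradient step decreases the objective. First I would write down the first-order optimality condition for $x_{t+1} = \arg\min_x V_t^b(x)$: since $\phi$ is block-separable for $b$ (Assumption~\ref{ass:separability}), the update only moves coordinates in $b$, so $x_{t+1} = x_t + \sum_{i\in b}\delta_i e_i$ with $\delta_i = e_i^\top(x_{t+1}-x_t)e_i$, and there exist subgradients $s_i \in \partial g_i(x_{t+1}^{(i)})$ such that for each $i \in b$,
\begin{equation*}
\frac{\eta_t}{p_i}\bigl(\nabla_i f(x_t) + s_i\bigr) + \nabla_i \phi(x_{t+1}) - \nabla_i \phi(x_t) = 0 .
\end{equation*}
Then I would use this identity to compare $F(x_{t+1})$ (or $F(x_t+\delta_i e_i)$ in case~1) with $F(x_t)$, bounding the change in $f$ by relative smoothness and the change in $g$ via convexity of $g_i$ and the subgradient inequality $g_i(x_t^{(i)}) \geq g_i(x_{t+1}^{(i)}) + s_i^\top(x_t^{(i)}-x_{t+1}^{(i)})$. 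The resulting bound should read, schematically, $F(x_t+\delta_i e_i) - F(x_t) \leq -(p_i/\eta_t - L_\rel^i)\,D_\phi(x_t+\delta_i e_i,\,x_t) \leq 0$ whenever $\eta_t L_\rel^i \leq p_i$, using convexity of $\phi$ to ensure the Bregman term is nonnegative.

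For case~1 (both $\phi$ and $f$ separable for $b$), the argument above is genuinely coordinate-wise: $\phi_b(x^{(b)}) = \sum_{i\in b}\phi_i(x^{(i)})$ decouples the minimization into $|b|$ independent one-dimensional problems, so for each $i\in b$ the step $\delta_i e_i$ is exactly the single-coordinate Bregman prox step, and the per-coordinate descent inequality $F(x_t) \geq F(x_t+\delta_i e_i)$ follows directly from the one-dimensional version of the computation. Here separability of $f$ within $b$ is not even needed for the per-coordinate statement, only that the update in direction $i$ sees the correct partial gradient — which it does because $\phi_b$ is separable so the $i$-th optimality condition involves only $\nabla_i f(x_t)$.

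For case~2 ($p_i = p_j =: p_b$ for all $i,j\in b$, and $\eta_t L_\rel^b \leq p_b$, where presumably $L_\rel^b$ is a block relative-smoothness constant), I would instead treat the whole block at once: with a common weight $\eta_t/p_b$, the update is $x_{t+1} = \arg\min_x \{ (\eta_t/p_b)[\nabla^{(b)}f(x_t)^\top x^{(b)} + \sum_{i\in b} g_i(x^{(i)})] + D_\phi(x,x_t)\}$, a genuine block-Bregman-prox step, and the block optimality condition combined with a block relative-smoothness inequality $D_f(x_t + \delta^{(b)}, x_t) \leq L_\rel^b D_\phi(x_t+\delta^{(b)},x_t)$ gives $F(x_{t+1}) \leq F(x_t)$ by the same three-term manipulation. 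The main obstacle I anticipate is being careful about exactly which relative-smoothness quantity is invoked in each case and making sure the block-separability of $\phi$ (Equation~\eqref{eq:block_separability_phi}) is correctly used to localize the update to coordinates in $b$ — without that, $x_{t+1}-x_t$ need not be supported on $b$ and the coordinate-wise bookkeeping collapses; the rest is a routine prox-descent calculation. I would also need to check the edge case where $\eta_t L_\rel^i = p_i$ exactly, where the descent is weak (non-strict), which is still fine for the "$\geq$" statement claimed.
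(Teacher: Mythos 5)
Your argument is correct and leads to the same quantitative conclusion as the paper, but the mechanism is slightly different: the paper never writes down the stationarity condition for $x_{t+1}$. Instead it only uses the \emph{value comparison} $V_t^b(x_{t+1}) \leq V_t^b(x_t)$, which after expanding the definition in~\eqref{eq:coord_breg_constrained} directly gives $g_i(x_t^{(i)}) - g_i(x_{t+1}^{(i)}) \geq \nabla_i f(x_t)^\top \delta_i + \frac{p_i}{\eta_t} D_{\phi_i}(x_{t+1}^{(i)}, x_t^{(i)})$, and then the directional relative smoothness~\eqref{eq:relative_smoothness} yields $F(x_t+\delta_i) \leq F(x_t) - (\frac{p_i}{\eta_t} - L_\rel^i) D_{\phi_i}(x_{t+1}^{(i)}, x_t^{(i)})$; point 2 is handled identically at the block level using $p_i = p_b$ and the block constant $L_\rel^b$. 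Your route — first-order optimality with subgradients $s_i \in \partial g_i(x_{t+1}^{(i)})$, the subgradient inequality for $g_i$, and the bound $(\nabla\phi(x_{t+1}) - \nabla\phi(x_t))^\top(x_{t+1}-x_t) \geq D_\phi(x_{t+1}, x_t)$ — is valid and gives the same descent estimate, at the cost of invoking subdifferential calculus for $g$ and the symmetrized Bregman term, which the value-comparison argument avoids entirely. Your side remarks are also consistent with the paper: block-separability of $\phi$ in~\eqref{eq:block_separability_phi} is exactly what localizes $x_{t+1}-x_t$ to $b$ (the paper makes this point explicitly after~\eqref{eq:monotonicity_3}), the per-coordinate claim of point 1 indeed does not use the $f$-separability condition of Assumption~\ref{ass:separability} (that condition is only needed later, in the proof of Theorem~\ref{thm:bregman_coordinate}, to aggregate the coordinate-wise decreases), and the non-strict case $\eta_t L_\rel^i = p_i$ is harmless.
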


\begin{proof}
We start by the first point. If $\phi$ is separable for $b$ then this means that each coordinate is updated independently. By definition of $x_{t+1}^{(i)}$, we have $V_t^b(x_{t+1}^{(b)}) \leq V_t^b(x_t)$. This writes, splitting over each $i$ and using the fact that $D_{\phi_i}(x_t, x_t) = 0$:
\begin{align*}
     g_i(x_t^{(i)}) - g_i(x_{t+1}^{(i)}) &\geq \nabla_i f(x_t)^\top (x_{t+1}^{(i)} - x_t^{(i)}) + \frac{p_i}{\eta}D_{\phi_i}(x_{t+1}^{(i)}, x_t^{(i)})\\
     &= \nabla_i f(x_t)^\top (x_t + \delta_i - x_t) + \frac{p_i}{\eta}D_{\phi_i}(x_{t+1}^{(i)}, x_t^{(i)})\\
    &= f(x_t + \delta_i) - f(x_t) - D_{f}(x_{t+1}^{(i)}, x_t^{(i)}) + \frac{p_i}{\eta}D_{\phi_i}(x_{t+1}^{(i)}, x_t^{(i)})\\
    &\geq f(x_t + \delta_i) - f(x_t) + \left(\frac{p_i}{\eta} - L_\rel^i \right)D_{\phi_i}(x_{t+1}^{(i)}, x_t^{(i)})\\
    &\geq f(x_t + \delta_i) - f(x_t).
\end{align*}
The result follows from summing over all $i \in b$, and using Assumption~\ref{ass:separability}. For the second point, it is not possible to split the update per coordinate since $\phi$ is not separable. Yet, we can still write (using separability of $g$): 
\begin{equation} \label{eq:monotonicity_2}
    \sum_{i \in b} \frac{\eta_t}{p_i} \left[g(x_t^{(i)}) - g(x_{t+1}^{(i)}) - \nabla_i f(x_t)^\top (x_{t+1}^{(i)} - x_t^{(i)})\right]\geq D_\phi(x_{t+1}, x_t).
\end{equation}
Since $g$ is separable and $p_i = p_b$ for all $i \in b$, Equation~\eqref{eq:monotonicity_2} writes:
\begin{equation} \label{eq:monotonicity_3}
    g(x_t) - g(x_{t+1}) \geq  \nabla f(x_t)^\top (x_{t+1} - x_t) +  \frac{p_b}{\eta}D_\phi(x_{t+1}, x_t).
\end{equation}
Note that this crucially relies on $x_{t+1} - x_t$ having support on $b$, which is enforced by the block-separability of $\phi$. Then, the proof is similar to that of the first point, using that $\eta_t L_\rel^b \leq p_b$.
\end{proof}

Using this monotonicity result allows us to prove Theorem~\ref{thm:bregman_coordinate}.

\begin{proof}[Proof of Theorem~\ref{thm:bregman_coordinate}]
First note that by convexity of all $g_i$,
$$\nabla^2 V_t^b(x) = \sum_{i \in b} \frac{\eta_t}{p_i} \nabla^2 g_i(x^{(i)}) + \nabla^2 \phi(x) \succcurlyeq \nabla^2 \phi(x).$$ 
Therefore, we have $D_{V_t^b}(x,y) \geq D_\phi(x,y)$ for all $x,y \in \R^d$. Applying this with $y = x_{t+1}$ yields:
\begin{equation} \label{eq:sc_Vt}
    V_t^b(x) - V_t^b(x_{t+1}) - \nabla V_t^b(x_{t+1})^\top (x - x_{t+1}) \geq D_\phi(x, x_{t+1}).
\end{equation}
Then, $\nabla V_t^b(x_{t+1}) = 0$ by definition of $x_{t+1}$, so Equation~\eqref{eq:sc_Vt} writes:
\begin{align*}
    D_\phi(x, x_{t+1}) + \sum_{i\in b}\frac{\eta_t}{p_i}\left(g_i(x_{t+1}^{(i)}) - g(x^{(i)})\right) \leq \sum_{i \in b}&\frac{\eta_t}{p_i} \nabla_i f(x_t)^\top (x - x_{t+1})\\
    &+ D_\phi(x, x_t) - D_\phi(x_{t+1}, x_t).
\end{align*}
We first consider that the first option of Assumption~\ref{ass:separability} holds, \emph{i.e.}, that $f$ and $\phi$ are separable in $b$. We note $\delta_i = e_i^\top(x_{t+1} - x_t) e_i$, so that:
\begin{align*}
    - \nabla_i f(x_t)^\top (x_{t+1} - x_t) &= \nabla f(x_t)^\top (x_t + \delta_i - x_t)\\
    &= f(x_t) - f(x_t + \delta_i) + D_f(x_t + \delta_i, x_t)\\
    &\leq f(x_t) - f(x_t + \delta_i) + L_\rel^i D_{\phi_i}(x_{t+1}^{(i)}, x_t^{(i)}).
\end{align*}
Therefore, if $\eta_t L_\rel^i \leq p_i$ for all $i \in b$,
\begin{align*}
    - \sum_{i \in b}& \frac{\eta_t}{p_i}\nabla_i f(x_t)^\top (x_{t+1} - x_t) - D_\phi(x_{t+1}, x_t)\\
    &\leq \sum_{i \in b} \frac{\eta_t}{p_i}\left[f(x_t) - f(x_t + \delta_i)\right] + \sum_{i \in b}\left(\frac{\eta_t L_\rel^i}{p_i} - 1\right)D_{\phi_i}(x_{t+1}^{(i)}, x_t^{(i)})\\
    &\leq \sum_{i \in b} \frac{\eta_t}{p_i}\left[f(x_t) - f(x_t + \delta_i)\right]
\end{align*}
The $g_i(x_{t+1}^{(i)}) - g_i(x^{(i)})$ term can be replaced by $g(x_t + \delta_i) - g(x_t) + g_i(x_t^{(i)}) - g_i(x^{(i)})$ since $g_j(x_{t+1}) = g_j(x_t)$ for $j \neq i$. Therefore, we obtain:
\begin{align}
\begin{split}\label{eq:before_monoton}
    &D_\phi(x, x_{t+1}) + \sum_{i \in b} \frac{\eta_t}{p_i}\left[F(x_t + \delta_i) - F(x_t)\right] + \sum_{i \in b}\frac{\eta_t}{p_i}\left(g_i(x_{t}^{(i)}) - g_i(x^{(i)})\right) \\
    & \leq \sum_{i\in b}\frac{\eta_t}{p_i}\nabla_i f(x_t)^\top (x - x_t) + D_\phi(x, x_t).
    \end{split}
\end{align}
The separability of $F$ in $b$ and its monotonicity lead to, using the fact that $x_{t+1} = x_t + \sum_{i \in b} \delta_i$:
\[\sum_{i \in b} \frac{\eta_t}{p_i}\left[F(x_t + \delta_i) - F(x_t)\right] \geq \frac{\eta_t}{p_{\min}} \sum_{i \in b} \left[F(x_t + \delta_i) - F(x_t)\right] =  \frac{\eta_t}{p_{\min}} \left[F(x_{t+1}) - F(x_t)\right].\]
Therefore, if the first option of Assumption~\ref{ass:separability} holds, we obtain: 
\begin{align}
\begin{split}\label{eq:after_monoton}
    &D_\phi(x, x_{t+1}) +  \frac{\eta_t}{p_{\min}} \left[F(x_{t+1}) - F(x_t)\right] + \sum_{i \in b}\frac{\eta_t}{p_i}\left(g_i(x_{t}^{(i)}) - g_i(x^{(i)})\right) \\
    & \leq \sum_{i\in b}\frac{\eta_t}{p_i}\nabla_i f(x_t)^\top (x - x_t) + D_\phi(x, x_t).
    \end{split}
\end{align}
If the second option holds, \emph{i.e.}, $p_i = p$ for all $i \in b$, then 
\[\sum_{i \in b} \frac{\eta_t}{p_i}\nabla_i f(x_t)^\top (x_{t+1} - x_t) = \frac{\eta_t}{p} \nabla f(x_t)^\top (x_{t+1} - x_t),\]
and Equation~\eqref{eq:after_monoton} can be obtained through similar derivations (at the block-level). Using the separability of $g$, we obtain that \[\esp{\sum_{i \in b}\frac{1}{p_i}\left(g_i(x_{t}^{(i)}) - g_i(x^{(i)})\right)} = g(x_t) - g(x).\]
Then, since $\esp{\sum_{i\in b}\frac{1}{p_i}\nabla_i f(x_t)} = \sum_i p_i^{-1} \sum_{b: i \in b}p(b) \nabla_i f(x_t) = \nabla f(x_t)$, and the relative strong convexity assumption yields:
\[\esp{\sum_{i\in b}\frac{1}{p_i}\nabla_i f(x_t)^\top (x - x_t)} = \nabla f(x_t)^\top (x - x_t) \leq f(x) - f(x_t) - \sigma_\rel D_\phi(x, x_t).\]
Therefore, taking the expectation of Equation~\eqref{eq:before_monoton} yields:
\begin{align*}
    \esp{D_\phi(x, x_{t+1}) + \frac{\eta_t}{p_{\min}}\left(F(x_{t+1}) - F(x_t)\right)} \leq \eta_t\left(F(x) - F(x_t)\right) + (1 -  \eta_t \sigma_\rel) D_\phi(x, x_t).
\end{align*}
We obtain after some rewriting:
\begin{align*}
    &\esp{D_\phi(x, x_{t+1}) + \frac{\eta_t}{p_{\min}}\left(F(x_{t+1}) - F(x)\right)}\\
    &\leq (1 - p_{\min})\frac{\eta_t}{p_{\min}}\left(F(x_{t}) - F(x)\right) + (1 -  \eta_t \sigma_\rel) D_\phi(x, x_t).
\end{align*}
Finally, $\sigma_\rel \leq L_\rel^i$ so $\eta_t\sigma_\rel \leq \eta_t L_\rel^i \leq p_i$ for all $i$, and in particular $1 - p_{\min} \leq 1 - \eta_t \sigma_\rel$, which yields the desired result.

The result on $L_t^\prime$ is be obtained by bounding $\eta / p_{\min}$ by $L_\rel^{\max} = \max_i L_\rel^i$ and remarking that $1 - \eta_t L_\rel^{\max} \leq 1 - \eta_t \sigma_\rel$ since $L_\rel^{\max} \geq \sigma_\rel$.
\end{proof}

\section{Convergence results for DVR} 
\label{app:application_augmented_problem}
We now give a series of small results, that justify our approach. We start by showing the applicability of Theorem~\ref{thm:bregman_coordinate} to Problem~\eqref{eq:dual_problem}, and the associated constants. Finally, we show how to obtain rates for the primal iterates $\theta_t$.

\subsection{Application to the dual of the augmented problem}
In this section, we note $f_{\rm sum}^* = \sum_{i=1}^n \sum_{j=1}^m f_{ij}^*$, so that Problem~\ref{eq:dual_problem} writes:
\begin{equation}
    \min_{x,y} q_A(x,y) + f_{\rm sum}^*(y)
\end{equation}

\begin{lemma} \label{lemma:equivalence_dvr_breg_cd}
The iterations of Algorithm~\ref{algo:DVR} are equivalent to the iteration of Equations~\eqref{eq:coord_breg_constrained} applied to Problem~\eqref{eq:dual_problem} with $g=0$ and $\phi(x,y) = \phi_\comm(x) + \sum_{i=1}^n \sum_{j=1}^m \phi_{ij}(y^{(ij)})$, with $\phi_\comm(x) = \frac{1}{2}\|x\|^2_{A^\dagger A}$ for coordinates associated with communication edges, and $\phi_{ij}(y^{(ij)}) = \frac{L_{ij}}{\mu_{ij}^2}f_{ij}^*(\mu_{ij} y_{ij})$ for coordinates associated with computation edges. 
\end{lemma}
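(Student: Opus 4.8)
The plan is to verify Lemma~\ref{lemma:equivalence_dvr_breg_cd} by running the general Bregman coordinate update~\eqref{eq:coord_breg_constrained} on Problem~\eqref{eq:dual_problem} with the stated $\phi$, for each of the two block types (communication and computation), and matching the resulting recursions term-by-term with lines 6 and 9--12 of Algorithm~\ref{algo:DVR}. Since $g=0$, the update in direction $b$ reads $x_{t+1} = \arg\min_x \sum_{i\in b}\frac{\eta}{p_i}\nabla_i q_A(x_t,y_t)^\top x + D_\phi(x,x_t)$, which simply becomes the Bregman gradient step $\nabla\phi(x_{t+1}) = \nabla\phi(x_t) - \eta\sum_{i\in b}p_i^{-1}\nabla_i q_A(x_t,y_t)$ once we check that $\phi$ is block-separable so the update has support on $b$.

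First I would treat the computation edges. Here the relevant block is a single virtual edge $(i,j)$ (or rather the collection of one such edge per node, but they decouple), so the update is exactly the single-coordinate dual-free derivation already carried out in Section~\ref{sec:algo_design}: with $\phi_{ij}(y^{(ij)}) = (L_{ij}/\mu_{ij}^2)f_{ij}^*(\mu_{ij}y^{(ij)})$ the computation in the displayed align block of the dual-free trick section gives $z_{t+1}^{(ij)} = (1-\frac{\alpha\eta}{p_{ij}})z_t^{(ij)} - \frac{\alpha\eta}{p_{ij}\mu_{ij}}\nabla_{y,ij}q_A(x_t,y_t)$ and $y_{t+1}^{(ij)} = \mu_{ij}^{-1}\nabla f_{ij}(z_{t+1}^{(ij)})$, where $z_t^{(ij)}$ is the auxiliary variable with $\nabla f_{ij}^*(\mu_{ij}y_t^{(ij)}) = z_t^{(ij)}$. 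I would then substitute $\nabla_{y,ij}q_A(x_t,y_t) = -\mu_{ij}\theta_t^{(i)}$ from Equation~\eqref{eq:grad_qA_comp} to get precisely line 11, and rewrite $\theta_{t+1}^{(i)}$ via Equation~\eqref{eq:grad_update} (the rescaled node variable) to get line 12; the key consistency check is that $\theta_0$ is initialized so that the identity $\theta_t^{(i)} = \sigma_i^{-1}(\tilde x_t^{(i)} - \sum_j\nabla f_{ij}(z_t^{(ij)}))$ holds at $t=0$, which is exactly what line 2 of Algorithm~\ref{algo:DVR} enforces (with $\tilde x_0 = 0$).

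Then I would handle the communication block. The choice $\phi_\comm(x) = \frac12\|x\|^2_{A^\dagger A}$ has $\nabla\phi_\comm(x) = A^\dagger A x$, so the Bregman step is $A^\dagger A x_{t+1} = A^\dagger A x_t - \eta p_\comm^{-1}\nabla_x q_A(x_t,y_t)$. Multiplying on the left by $A_\comm$ and using $A_\comm A^\dagger A = A_\comm$ on the range, together with $[AP_\comm A^\top]_\comm = W\otimes I_d$ and $\nabla_x q_A = 2 A^\top\Sigma A(x_t,y_t)$ restricted appropriately, recovers $\tilde x_{t+1} = \tilde x_t - \eta p_\comm^{-1}(W\otimes I_d)\Sigma_\comm[A(x_t,y_t)]_\comm$, i.e. Equation~\eqref{eq:comm_update}; rewriting in terms of $\theta_t = \Sigma_\comm[A(x_t,y_t)]_\comm$ gives line 6. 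One should also check the separability requirements of Assumption~\ref{ass:separability}: $\phi$ is block-separable since $\phi_\comm$ depends only on $x$ and each $\phi_{ij}$ only on $y^{(ij)}$; within the communication block all coordinates receive probability $p_\comm$ (second option), and the virtual edges in a computation step are adjacent to distinct nodes so $q_A$ is separable over them and so is $\phi$ (first option) --- but this is really the content of Theorem~\ref{thm:dvr_rate}'s applicability rather than of this lemma, so I would only remark on it.

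The main obstacle is bookkeeping rather than conceptual: one must be careful with (i) the factor $\alpha = \mu_{ij}^2/L_{ij}$ appearing when converting between $y^{(ij)}$ and $z^{(ij)}$ and with the normalization making $\phi_{ij}$ $1$-strongly convex, (ii) the pseudo-inverse $A^\dagger A$ acting as the identity on $\mathrm{Range}(A^\top)$ so that iterates stay in the correct subspace and the projectors $P_{ij}$ can be dropped as argued around Equation~\eqref{eq:grad_update}, and (iii) correctly tracking that $z_t$ plays the role of $\nabla f_{ij}^*(\mu_{ij}y_t^{(ij)})$ so that the $y$-variable is never instantiated and the algorithm is genuinely dual-free. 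Once these identifications are pinned down, equality of the two iteration schemes is immediate.
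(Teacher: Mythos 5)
Your proposal is correct and follows essentially the same route as the paper: the paper's proof of this lemma is a one-line reference back to the dual-free and distributed-implementation derivations of Section~\ref{sec:algo_design}, which are exactly the computations you carry out (computation-edge Bregman step with $\phi_{ij}\propto f_{ij}^*$, communication-edge step multiplied by $A_\comm$, and the initialization check on $\theta_0$). Your additional bookkeeping remarks (the role of $z_t$ as $\nabla f_{ij}^*(\mu_{ij}y_t^{(ij)})$, the $A^\dagger A$ degeneracy, deferring the separability check to the coordinate-descent theorem) are all consistent with the paper's treatment.
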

\begin{proof}
This result follows from the dual-free and implementation-friendly derivations presented in the previous section. 
\end{proof}

\begin{lemma} \label{lemma:rel_smooth_sc}
Let $\alpha = 2\lambda_{\min}(A^\top_\comm D_M^{-1} A_\comm)$, and $\phi$ as in Lemma~\ref{lemma:equivalence_dvr_breg_cd}, then:
\begin{enumerate}
    \item $q_A + f_{\rm sum}^*$ is ($\alpha /2$)-strongly convex relatively to $\phi$.
    \item $q_A + f_{\rm sum}^*$ is ($L_\rel^\comm$)-smooth relatively to $\phi$ in the direction of communication edges, with \[L_\rel^\comm = \lambda_{\max}(A_\comm^\top \Sigma_\comm A_\comm).\]
    \item $q_A + f_{\rm sum}^*$ is ($L_\rel^{ij}$)-smooth relatively to $\phi$ in the direction of virtual edge $(i,j)$, with \[L_\rel^{ij} = \alpha\left(1 + \frac{L_{ij}}{\sigma_i}\right).\]
\end{enumerate}
\end{lemma}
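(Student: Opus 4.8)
The plan is to compute the Bregman divergences of both $q_A + f_{\rm sum}^*$ and $\phi$ explicitly and compare them. Since $f_{ij}^*$ is $L_{ij}^{-1}$-strongly convex (because $f_{ij}$ is $L_{ij}$-smooth) and $\phi_{ij}(y) = (L_{ij}/\mu_{ij}^2)f_{ij}^*(\mu_{ij}y)$, the chain rule gives $D_{\phi_{ij}}(y, y') = (L_{ij}/\mu_{ij}^2) D_{f_{ij}^*}(\mu_{ij}y, \mu_{ij}y')$; with $\mu_{ij}^2 = \alpha L_{ij}$ this normalizes $\phi_{ij}$ to be $1$-strongly convex, and $f_{sum}^*$ itself contributes $D_{f_{sum}^*}(y,y') = \sum_{ij}(\mu_{ij}^2/L_{ij})D_{\phi_{ij}} = \alpha \sum_{ij} D_{\phi_{ij}}$ in the $y$-directions. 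For $\phi_\comm(x) = \frac12\|x\|^2_{A^\dagger A}$, the divergence is simply $\frac12\|x-x'\|^2_{A^\dagger A}$. The quadratic $q_A(x,y) = \frac12 (x,y)^\top A^\top \Sigma A (x,y)$ has $D_{q_A}((x,y),(x',y')) = \frac12\|A(x,y) - A(x',y')\|^2_\Sigma$, since it is a quadratic form.

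For the strong-convexity claim, I would show $D_{q_A + f_{sum}^*} \geq (\alpha/2) D_\phi$ globally. The $y$-part already gives $D_{f_{sum}^*} = \alpha\sum_{ij}D_{\phi_{ij}} = \alpha D_{\phi}|_y \geq (\alpha/2)D_\phi|_y$, so it suffices to control the communication directions using $D_{q_A}$: I need $\frac12\|A_\comm x\|^2_{\Sigma_\comm} \geq (\alpha/4)\|x\|^2_{A^\dagger A}$ on the range of $A_\comm^\top$, equivalently $A_\comm^\top \Sigma_\comm A_\comm \succcurlyeq (\alpha/2) A_\comm^\dagger A_\comm$ on that subspace. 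Here the choice $\alpha = 2\lambda_{\min}^+(A_\comm^\top D_M^{-1} A_\comm)$ enters: since $D_M \succcurlyeq \Sigma_\comm^{-1}$ is false in general, I'd instead compare via $\Sigma_\comm \succcurlyeq D_M^{-1}$ (as $(D_M)_{ii} = \sigma_i + \lambda_{\max}(\sum_j L_{ij}P_{ij}) \geq \sigma_i$, so $D_M^{-1} \preccurlyeq \Sigma_\comm$), hence $A_\comm^\top \Sigma_\comm A_\comm \succcurlyeq A_\comm^\top D_M^{-1} A_\comm \succcurlyeq \lambda_{\min}^+(A_\comm^\top D_M^{-1}A_\comm) \cdot A_\comm^\dagger A_\comm = (\alpha/2) A_\comm^\dagger A_\comm$ on $\mathrm{range}(A_\comm^\top)$, using that $A_\comm^\dagger A_\comm$ is the projector onto that subspace.

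For the directional smoothness bounds, I would use the directional relative smoothness definition~\eqref{eq:relative_smoothness}. In a communication direction $e_i$, the only contribution to $D_{q_A+f_{sum}^*}(x+\delta e_i, x)$ is $\frac12\|A_\comm(\delta e_i)\|^2_{\Sigma_\comm} = \frac{\delta^2}{2}(A_\comm^\top\Sigma_\comm A_\comm)_{ii}$, while $D_\phi(x+\delta e_i,x) = \frac{\delta^2}{2}(A_\comm^\dagger A_\comm)_{ii}$; taking the worst case over directions spanning $\mathrm{range}(A_\comm^\top)$ gives $L_\rel^\comm = \lambda_{\max}(A_\comm^\top\Sigma_\comm A_\comm)$ (one should note that for this to be a valid block-direction smoothness constant in the sense needed, the argument is really applied to the whole communication block at once, consistent with the second option of Assumption~\ref{ass:separability}). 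For a virtual edge $(i,j)$, moving $y^{(ij)}$ by $\delta$ changes $q_A$ through the $i$-th node term, contributing $\frac{\delta^2}{2}\mu_{ij}^2(\Sigma_\comm)_{ii} = \frac{\delta^2}{2}\alpha L_{ij}/\sigma_i$, and changes $f_{sum}^*$ by $D_{f_{ij}^*}(\mu_{ij}(y+\delta),\mu_{ij}y) = \alpha D_{\phi_{ij}}$, while $D_\phi = D_{\phi_{ij}}$ in that direction; combining, $D_{q_A+f_{sum}^*} \leq \alpha(1 + L_{ij}/\sigma_i)\,D_{\phi_{ij}}$ after bounding $\frac{\delta^2}{2}\alpha L_{ij}/\sigma_i \leq \alpha (L_{ij}/\sigma_i) D_{\phi_{ij}}$ using the $1$-strong convexity of $\phi_{ij}$, which gives $L_\rel^{ij} = \alpha(1 + L_{ij}/\sigma_i)$.

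The main obstacle is the strong-convexity step: carefully handling the fact that $A_\comm^\top\Sigma_\comm A_\comm$, $A_\comm^\dagger A_\comm$, and $A_\comm^\top D_M^{-1} A_\comm$ all have the same kernel (the complement of $\mathrm{range}(A_\comm^\top)$), so that the inequalities only hold on that subspace, and verifying that $D_\phi$ for the quadratic $\phi_\comm$ indeed only sees this subspace (the iterates stay in $\mathrm{range}(A_\comm^\top)$), so the restriction is harmless. The cross-terms between $x$ and $y$ in $q_A$ also need a word: since $\Sigma$ is block-diagonal over nodes and $A(x,y)$ mixes communication and virtual edges only at shared nodes, $D_{q_A}$ is not separable across the two blocks, but for the directional/blockwise smoothness estimates only one block is perturbed at a time, so the quadratic's Hessian restricted to that block is what matters, and the relevant diagonal blocks of $A^\top\Sigma A$ are exactly the ones computed above.
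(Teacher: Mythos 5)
Your directional smoothness arguments (parts 2 and 3) are essentially the paper's computation: only one block is perturbed at a time, $f_{\rm sum}^*$ does not move in communication directions, and for a virtual edge $(i,j)$ the quadratic contributes $\tfrac{\mu_{ij}^2}{2\sigma_i}\|P_{ij}\delta\|^2$ while $f_{ij}^*$ contributes $\alpha D_{\phi_{ij}}$, which together give $\alpha(1+L_{ij}/\sigma_i)$. (Your step bounding the quadratic part by the $1$-strong convexity of $\phi_{ij}$ should be stated on the range of $P_{ij}$, since $\nabla^2\phi_{ij}\succcurlyeq P_{ij}$ only; the displacement enters $q_A$ through $P_{ij}$ as well, so this is a minor bookkeeping point that the paper handles by keeping the projectors explicit.)

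The strong-convexity step (part 1), however, has a genuine gap. You propose to cover the $y$-part of $D_\phi$ with $D_{f_{\rm sum}^*}=\alpha\sum_{ij}D_{\phi_{ij}}$ and the communication part with $D_{q_A}$ alone, via $A_\comm^\top\Sigma_\comm A_\comm\succcurlyeq(\alpha/2)A_\comm^\dagger A_\comm$. But
$D_{q_A}((x,y),(x',y'))=\tfrac12\|A(x-x',y-y')\|_\Sigma^2=\tfrac12\sum_i\sigma_i^{-1}\big\|(A_\comm(x-x'))^{(i)}-\sum_j\mu_{ij}P_{ij}(y^{(ij)}-y'^{(ij)})\big\|^2$,
and the virtual displacements can exactly cancel the communication displacement at every node, so $D_{q_A}$ can vanish while $D_{\phi_\comm}(x,x')>0$: the inequality $D_{q_A}\geq(\alpha/2)D_{\phi_\comm}$ is false, and the blockwise split does not survive the cross terms (which you only discuss for the directional bounds, where they are harmless because a single block moves). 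Symptomatically, your detour through $\Sigma_\comm\succcurlyeq D_M^{-1}$ would, if the split were valid, prove relative strong convexity with the larger constant $\lambda_{\min}^+(A_\comm^\top\Sigma_\comm A_\comm)$, independent of the $L_{ij}$; this cannot be correct, since along the cancelling mixed directions the dual function only has curvature of order $\mu_{ij}^2\nabla^2 f_{ij}^*$ while $\phi$ retains the full $\phi_\comm$ term — which is exactly why $D_M$, with $(D_M)_{ii}=\sigma_i+\lambda_{\max}(\sum_j L_{ij}P_{ij})$, must appear in $\alpha$. The paper closes this step differently: it uses the curvature lower bound $\nabla^2 f_{ij}^*\succcurlyeq L_{ij}^{-1}P_{ij}$ to augment $\Sigma$ into $\tilde\Sigma$ (weights $\sigma_i^{-1}$ on communication nodes and $L_{ij}^{-1}$ on virtual nodes), invokes the joint spectral inequality $A^\top\tilde\Sigma A\succcurlyeq\sigma_F A^\dagger A$ with $\sigma_F=\lambda_{\min}^+(A_\comm^\top D_M^{-1}A_\comm)=\alpha/2$ (Lemma 6.5 of Hendrikx et al., 2020), and then redistributes the leftover curvature of $f_{\rm sum}^*$ to dominate the $y$-part of $\nabla^2\phi$. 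Your argument needs this (or an equivalent argument treating both blocks jointly) to establish part 1.
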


\begin{proof}
First note that $\nabla^2 f_{\rm sum}^*$ is a block-diagonal matrix, and its $ij$-th block is equal to 
\begin{equation}\label{eq:sc_f_sum}
    (\nabla^2 f_{\rm sum}^*(y))_{ij} = A^\top (u_{ij} u_{ij}^\top \otimes \nabla^2 f_{ij}^*(\mu_{ij}y^{(ij)})) A \succcurlyeq \frac{1}{L_{ij}} A^\top (u_{ij} u_{ij}^\top \otimes P_{ij}) A,
\end{equation}
where $u_{ij} \in \R^{n(1+m)}$ denotes the unit vector corresponding to virtual \emph{node} $(i,j)$. We denote $\tilde{\Sigma} = \Sigma + \sum_{i=1}^n \sum_{j=1}^m \frac{1}{L_{ij}} (u_{ij} u_{ij}^\top) \otimes I_d$. Then, 
\begin{equation} \label{eq:split_qa_fsum}
    \nabla^2 q_A(x, y) + \nabla^2 f_{\rm sum}^*(y) = A^\top \tilde{\Sigma} A + \nabla^2 f_{\rm sum}^*(y) - A^\top \left[\sum_{i=1}^n \sum_{j=1}^m \frac{1}{L_{ij}} (u_{ij} u_{ij}^\top) \otimes P_{ij}\right] A.
\end{equation}

\paragraph{Relative strong convexity.} Then,~\citet[Lemma 6.5]{hendrikx2020optimal} leads to $A^\top \tilde{\Sigma} A \succcurlyeq \sigma_F A^\dagger A$. Note that the notations are slightly different, and the matrix $\tilde{\Sigma}$ in this paper is the same as the matrix $\Sigma^\dagger$ in~\citet{hendrikx2020optimal}. Then, remark that $(A^\dagger A)_{ij} = P_{ij} = \frac{1}{\mu_{ij}^2} (A^\top [(u_{ij} u_{ij}^\top) \otimes P_{ij}] A)_{ij}$, and $\phi_{ij} = \alpha^{-1} f_{ij}^*$, so that:
\begin{align*}
    \nabla^2 q_A(x, y) + \nabla^2 f_{\rm sum}^*(y) &\succcurlyeq \sigma_F \nabla^2 \phi(x, y) + 
    \\&(1 - \alpha^{-1} \sigma_F)\left[\nabla^2 f_{\rm sum}^*(y) - A^\top \left[\sum_{i=1}^n \sum_{j=1}^m \frac{1}{L_{ij}} (u_{ij} u_{ij}^\top) \otimes P_{ij}\right] A.\right].
\end{align*}
Finally, using that Equation~\eqref{eq:sc_f_sum} along with the fact that $\sigma_F \leq \alpha$ implies that $q_A + f_{\rm sum}^*$ is $\sigma_F$-relatively strongly convex with respect to $\phi$.

\paragraph{Relative smoothness.} We first prove the relative smoothness property for communicate edges. For any $\tilde{x} \in R^{Ed}$, Equation~\eqref{eq:split_qa_fsum} leads to:
\begin{equation*}
    (\tilde{x}, 0)^\top [\nabla^2 q_A(x, y) + \nabla^2 f_{\rm sum}^*(y)](\tilde{x}, 0) = (\tilde{x}, 0)^\top A^\top \Sigma A (\tilde{x}, 0) \preccurlyeq L_\rel^\comm(\tilde{x}, 0)^\top\nabla^2\phi(x,y) (\tilde{x}, 0).
\end{equation*}
Similarly, for any $\theta \in \R^d$, we consider $\tilde{y} = e_{ij} \otimes \theta$ and write: 
\begin{align*}
    \tilde{y}^\top[\nabla^2 q_A(x, y)& + \nabla^2 f_{\rm sum}^*(y)]\tilde{y} = \tilde{y}^\top A^\top \tilde{\Sigma} A \tilde{y} + \mu_{ij}^2 \theta^\top \left[\nabla^2 f_{ij}^*(\mu_{ij}y^{(ij)}) - \frac{1}{L_{ij}}P_{ij}\right] \theta\\
    &\preccurlyeq L_\rel^i \tilde{y}^\top \nabla^2\phi(x,y)\tilde{y} + (1 - \alpha^{-1}L_\rel^i)\theta^\top \left[\nabla^2 f_{ij}^*(\mu_{ij}y^{(ij)}) - \frac{1}{L_{ij}}P_{ij}\right] \theta,
\end{align*}
with
\[L_\rel^i = \max_\theta \mu_{ij}^2 u_{ij}^\top \tilde{\Sigma} u_{ij} \frac{\theta^\top P_{ij} \theta}{\|\theta\|^2} \leq \alpha\left(1 + \frac{L_{ij}}{\sigma_i}\right).\]
Finally, $\nabla^2 f_{ij}^*(\mu_{ij}y^{(ij)}) \succcurlyeq P_{ij} / L_{ij}$, and $\alpha \leq L_\rel^i$, which ends the proof of the directional relative smoothness result.
\end{proof}

\begin{lemma} \label{lemma:sep_qA_plus_f}
Assumption~\ref{ass:separability} holds with $f = q_A + f^*_{\rm sum}$, $g=0$, and $\phi$ as in Lemma~\ref{lemma:equivalence_dvr_breg_cd}, and when the sampling is such that either:
\begin{itemize}
    \item All communication edges are sampled at once, or
    \item Each node samples exactly one virtual edge. 
\end{itemize}
\end{lemma}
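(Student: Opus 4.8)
The plan is to verify each clause of Assumption~\ref{ass:separability} directly for the two sampling schemes, using the explicit form of $\phi$ from Lemma~\ref{lemma:equivalence_dvr_breg_cd} and the block structure of $q_A$. First I would check block-separability of $\phi$, i.e.\ Equation~\eqref{eq:block_separability_phi}. Since $g=0$, only $\phi$ and $f = q_A + f^*_{\rm sum}$ need attention. The function $\phi$ is a sum $\phi_\comm(x) + \sum_{i,j}\phi_{ij}(y^{(ij)})$ where the communication part $\phi_\comm(x) = \tfrac12\|x\|^2_{A^\dagger A}$ couples the communication coordinates among themselves but not with the $y$ coordinates, and each $\phi_{ij}$ depends only on its own block $y^{(ij)}$. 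Hence for the block $b = $ (all communication edges) we have $\phi_b(x^{(b)}) = \phi_\comm(x)$ and $\phi_b^\perp$ the sum of the $\phi_{ij}$'s, so Equation~\eqref{eq:block_separability_phi} holds; for a block $b$ consisting of one virtual edge per node, $\phi_b(y^{(b)}) = \sum_{i}\phi_{i j_i}(y^{(i j_i)})$ is itself coordinate-wise separable over the blocks $(i,j_i)$, and $\phi_b^\perp$ collects the remaining terms, so again Equation~\eqref{eq:block_separability_phi} holds.

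Next I would dispatch the two cases. For the communication block: here $\phi_\comm$ is \emph{not} separable into one scalar function per communication coordinate (the quadratic form $A^\dagger A$ is not diagonal), so I invoke the \emph{second} option of Assumption~\ref{ass:separability}, which only requires $p_i = p_j$ for all coordinates $i,j$ in the block. This holds because in Algorithm~\ref{algo:DVR} all communication edges are updated simultaneously whenever $u_t \le p_\comm$, so every communication coordinate has the same marginal probability $p_\comm$. For the "one virtual edge per node" block: I would use the \emph{first} option. The separability of $\phi$ within the block was just established ($\phi_b = \sum_i \phi_{i j_i}$). It remains to check the additivity of $f$ across the updated directions, namely
\[
\sum_{i} \big[f(x_t + \delta_i e_i) - f(x_t)\big] = f\Big(x_t + \sum_i \delta_i e_i\Big) - f(x_t),
\]
where the sum is over the chosen virtual edges, one per node. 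Here $f = q_A + f^*_{\rm sum}$; the term $f^*_{\rm sum}(y) = \sum_{i,j} f^*_{ij}(\mu_{ij}y^{(ij)})$ is coordinate-block separable and so trivially additive. For the quadratic $q_A(x,y) = (x,y)^\top A^\top\Sigma A (x,y)$, additivity across a set $S$ of directions is equivalent to the off-diagonal blocks $(A^\top\Sigma A)_{(ij),(kl)}$ vanishing for distinct $(i,j),(k,l)\in S$; since $A^\top \Sigma A = A^\top_\comm \Sigma_\comm A_\comm$ restricted appropriately and $\Sigma$ only weights the regularization nodes, the $(ij)$–$(kl)$ block is $\mu_{ij}\mu_{kl}\,(u_i - u_{ij})^\top\Sigma(u_k - u_{kl})\,P_{ij}P_{kl}$ up to transposes, which is $\sigma_i^{-1}\mu_{ij}\mu_{kl}\,\delta_{ik}\,(\cdot)$ — nonzero only when $i = k$. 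Thus if the sampling picks \emph{exactly one} virtual edge per node, no two chosen edges share their endpoint node $i$, so all these cross-blocks vanish and additivity holds. This is precisely the "no two virtual edges adjacent to the same node are updated at the same time" remark from the proof sketch of Theorem~\ref{thm:dvr_rate}.

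The main obstacle — really the only non-bookkeeping point — is the additivity computation for $q_A$: one must be careful that $A$ was written at the $n(1+m)d$-dimensional level (with the projectors $P_{ij}$), and argue that the relevant cross term in $A^\top\Sigma A$ between two virtual edges is mediated solely through the shared regularization node, so that it vanishes exactly when the two virtual edges have distinct parent nodes $i \ne k$. Once that Hessian-block vanishing is established, both options of Assumption~\ref{ass:separability} follow immediately from the two stated sampling rules, and the lemma is proved.
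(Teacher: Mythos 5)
Your proposal is correct and follows essentially the same route as the paper: block-separability of $\phi$ by construction, option~2 for the communication block via $p_i=p_\comm$ for all its coordinates, and option~1 for the virtual blocks by showing the cross term of $A^\top\Sigma A$ between two virtual edges $(i,j)$ and $(k,l)$ reduces to $\mu_{ij}\mu_{kl}\,u_i^\top\Sigma u_k$ (since $\Sigma$ vanishes on virtual nodes), which is zero when $i\neq k$ — exactly the computation $e_{k,j_k}^\top A^\top\Sigma A\, e_{\ell,j_\ell}=0$ the paper uses before expanding the quadratic $q_A$. No gaps worth flagging.
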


\begin{proof}
First of all, $g=0$ is separable, and $\phi$ is separable with respect to the communication and computation blocks by construction.

We note $b_\comm$ the block of all communication edges, which is sampled with probability $p_\comm$. All communication edges are sampled at the same time, so $p_i = p_\comm$ for all $i \in b_\comm$ and so $\phi$ respects option $2$ for the communication block.

Let us now consider a computation block $b$. First of all, $\phi$ is separable for the virtual edges. Then, virtual blocks contain exactly one virtual edge per node, and so $b = \{(1, j_1), \cdots, (n, j_n)\}$. Let $k \neq \ell$, then
\begin{align*}
    e_{k, j_k}^\top A^\top \Sigma  A e_{\ell, j_\ell} = \mu_{k, j_k} \mu_{\ell, j_\ell} (e_k - e_{k, j_k})^\top \Sigma  (e_\ell - e_{\ell, j_\ell}) = 0.
\end{align*}
Therefore,
\begin{align*}
q_A\left(x_t + \sum_{(i,j) \in b} \delta_{ij}\right) - q_A(x_t) &= \frac{1}{2}\left(\sum_{(i,j) \in b} \delta_{ij}\right) A^\top \Sigma  A \Bigg(\sum_{(ij) \in b} \delta_{ij}\Bigg) + \left(\sum_{(ij) \in b} A\delta_{ij} \right)^\top \Sigma  A x_t\\
&= \sum_{(i,j) \in b} \left(q_A(\delta_{ij}) + \delta_{ij}^\top A^\top \Sigma  A x_t\right)\\
&= \sum_{(i,j) \in b}\left(q_A(x_t + \delta_{ij}) - q_A(x_t)\right).
\end{align*}
Finally, $f_{\rm sum}^*$ is separable, and so $q_A + f_{\rm sum}^*$ respects option 2.
\end{proof}

We can now prove the main theorem on the convergence rate of DVR.

\begin{theorem}\label{thm:dvr_dual_guarantees}
We choose $p_\comm = \big(1 + \gamma \frac{m + \kappa_s}{\kappa_\comm}\big)^{-1}$ and $p_{ij} \propto (1 - p_\comm)(1 + L_{ij} / \sigma_i)$. Then, for all $\theta_0 \in \R^{n \times d}$ and all $t > 0$, the error is such that:
\begin{equation}
    \frac{\eta_t}{p_{\min}}D_\phi(\lambda_\star, \lambda_t) + D(\lambda_t) - D(\lambda_\star) \leq \left(1 - \frac{\alpha \eta_t}{2} \right)^t \left[\frac{\eta_t}{p_{\min}}D_\phi(\lambda_\star, \lambda_0) + D(\lambda_0) - D(\lambda_\star)\right],
\end{equation}
with $p_{\min} = \min (p_\comm, \min_{ij} p_{ij})$, $\lambda_t = (x_t, y_t)$ and $D = - (q_A + f_{\rm sum}^*)$. Therefore, the expected time $T_\varepsilon$ required to reach precision $\varepsilon$ is equal to:
\[ T_\varepsilon = O\left(\left[2(m + \kappa_s) + \tau \frac{\kappa_\comm}{\gamma}\right]\log\varepsilon^{-1}\right).\]
\end{theorem}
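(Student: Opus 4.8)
The plan is to assemble Theorem~\ref{thm:dvr_dual_guarantees} from the pieces already established, since the heavy lifting has been done: Lemma~\ref{lemma:equivalence_dvr_breg_cd} identifies the DVR iterates with Bregman block coordinate descent on the dual, Lemma~\ref{lemma:rel_smooth_sc} supplies the relative smoothness/strong convexity constants, and Lemma~\ref{lemma:sep_qA_plus_f} verifies Assumption~\ref{ass:separability} for the two sampling blocks. The first step is therefore to invoke Theorem~\ref{thm:bregman_coordinate} with $f = q_A + f_{\rm sum}^*$, $g = 0$, the Bregman function $\phi$ of Lemma~\ref{lemma:equivalence_dvr_breg_cd}, $\sigma_\rel = \alpha/2$ (since $\alpha = 2\lambda_{\min}^+(A_\comm^\top D_M^{-1} A_\comm)$ and the relative strong convexity constant is $\sigma_F = \alpha/2$ by Lemma~\ref{lemma:rel_smooth_sc}), and directional relative smoothness constants $L_\rel^\comm = \lambda_{\max}(A_\comm^\top \Sigma_\comm A_\comm)$ on the communication block and $L_\rel^{ij} = \alpha(1 + L_{ij}/\sigma_i)$ on virtual edges. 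One must check that the step-size $\eta$ chosen in Algorithm~\ref{algo:DVR}, namely $\eta = \min(p_\comm / L_\rel^\comm,\ p_{ij}/L_\rel^{ij})$, satisfies the hypothesis $\eta L_\rel^i < p_i$ for every coordinate $i$ — which is immediate from the definition up to the strict-vs-nonstrict subtlety, handled by a standard limiting argument or by noting the step-size can be taken slightly smaller. This yields the Lyapunov contraction $\esp{L_{t+1}} \le (1 - \eta\sigma_\rel) L_t$ with $L_t = D_\phi(\lambda_\star, \lambda_t) + \tfrac{\eta}{p_{\min}}(D(\lambda_t) - D(\lambda_\star))$ written in terms of the dual objective $D = -(q_A + f_{\rm sum}^*)$ evaluated at $\lambda_t = (x_t, y_t)$, giving the displayed inequality with rate $(1 - \alpha\eta/2)^t$ after unrolling.

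Second, I would translate this per-iteration contraction into the time complexity $T_\varepsilon$. Here the key quantity is $\alpha\eta$, which the chosen parameters balance: on computation edges $\alpha\eta = \min_j p_{ij}/(1 + L_{ij}/\sigma_i)$, and with $p_{ij} \propto (1-p_\comm)(1+L_{ij}/\sigma_i)$ all these ratios are equal, so $\alpha\eta = (1-p_\comm)/\sum_j(1+L_{ij}/\sigma_i) \gtrsim (1-p_\comm)/(m + \kappa_s)$ (using $\kappa_s = \max_i (1 + \sum_j L_{ij})/\sigma_i$); on the communication block $\alpha\eta \le \alpha p_\comm / \lambda_{\max}(A_\comm^\top\Sigma_\comm A_\comm)$, and recalling $\kappa_\comm = \gamma\lambda_{\max}(A_\comm^\top\Sigma_\comm A_\comm)/\lambda_{\min}^+(A_\comm^\top D_M^{-1} A_\comm)$ and $\alpha = 2\lambda_{\min}^+(A_\comm^\top D_M^{-1} A_\comm)$ this is of order $\gamma p_\comm / \kappa_\comm$. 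The number of iterations to reach precision $\varepsilon$ is $O((\alpha\eta)^{-1}\log\varepsilon^{-1})$. The expected time per iteration is $p_\comm \tau + (1-p_\comm)\cdot O(1)$ since a communication step costs $\tau$ and a computation step costs $O(1)$ gradient. Multiplying, $T_\varepsilon = O\big((\alpha\eta)^{-1}(p_\comm\tau + 1)\log\varepsilon^{-1}\big)$, and plugging in the two regimes for $\alpha\eta$ gives roughly $\big((m+\kappa_s)/(1-p_\comm) + \kappa_\comm/(\gamma p_\comm)\big)(p_\comm\tau + 1)$; the choice $p_\comm = (1 + \gamma\frac{m+\kappa_s}{\kappa_\comm})^{-1}$ is precisely the one equalizing $1-p_\comm$ and $\gamma p_\comm (m+\kappa_s)/\kappa_\comm$, after which the expression collapses to $O\big([(m+\kappa_s) + \tau\kappa_\comm/\gamma]\log\varepsilon^{-1}\big)$, up to the constant $2$ carried in the statement.

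The main obstacle, if any, is bookkeeping rather than conceptual: one must be careful that the Lyapunov function's normalization uses $p_{\min} = \min(p_\comm, \min_{ij}p_{ij})$ consistently with Theorem~\ref{thm:bregman_coordinate} (which was stated with a generic $p_{\min} = \min_i p_i$), and that the block sampling used in Algorithm~\ref{algo:DVR} — one communication block with probability $p_\comm$, and one virtual-edge-per-node block with probability $p_{ij}$ — genuinely induces the per-coordinate marginals $p_i$ assumed there, which is exactly what Lemma~\ref{lemma:sep_qA_plus_f} arranges. I would also spell out the optimization over $p_\comm$ explicitly: writing the time bound as a function $g(p_\comm) = (A/(1-p_\comm) + B/p_\comm)(C p_\comm + 1)$ with $A = m+\kappa_s$, $B = \kappa_\comm/\gamma$, $C = \tau$, one checks that the stated $p_\comm$ is within a constant factor of optimal, which suffices for the $O(\cdot)$ claim. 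Everything else — the passage from the contraction factor to $\log\varepsilon^{-1}$ iterations, and the fact that $D_\phi(\lambda_\star,\lambda_0)$ and $D(\lambda_0) - D(\lambda_\star)$ are finite and absorbed into the implied constant — is routine.

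Finally, I note that the primal guarantee of Theorem~\ref{thm:dvr_rate}, $\sum_i \tfrac12\|\theta_t^{(i)} - \theta^\star\|^2 \le C_0(1-\alpha\eta/2)^t$, would follow from this dual statement by the identity $\theta_t = \Sigma_\comm[A(x_t,y_t)]_\comm$ together with a strong-convexity/smoothness sandwiching relating $\|\theta_t - \theta^\star\|^2$ to the dual suboptimality and $D_\phi(\lambda_\star,\lambda_t)$ — but since Theorem~\ref{thm:dvr_dual_guarantees} as stated concerns only the dual quantities, that last translation is the subject of the subsequent subsection and not needed here.
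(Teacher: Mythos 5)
Your proposal is correct and follows essentially the same route as the paper: invoke Theorem~\ref{thm:bregman_coordinate} on the dual problem via Lemmas~\ref{lemma:equivalence_dvr_breg_cd}, \ref{lemma:rel_smooth_sc} and \ref{lemma:sep_qA_plus_f}, read off the step-size constraints $\eta \le p_\comm/L_\rel^\comm$ and $\eta \le p_{ij}/L_\rel^{ij}$, and then balance the two (which is exactly what the stated choice of $p_\comm$ and $p_{ij}\propto(1-p_\comm)(1+L_{ij}/\sigma_i)$ does) before multiplying the iteration count $O((\alpha\eta)^{-1}\log\varepsilon^{-1})$ by the expected per-iteration cost $p_\comm\tau + (1-p_\comm)$. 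The bookkeeping points you flag (marginal probabilities, $p_{\min}$, strict vs.\ non-strict step-size inequality) are handled the same way in the paper's argument, so no gap.
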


\begin{proof}
Using Lemmas~\ref{lemma:sep_qA_plus_f} and~\ref{lemma:rel_smooth_sc}, we apply Theorem~\ref{thm:bregman_coordinate} (convergence of Bregman coordinate gradient descent), and obtain that the convergence rate is $\eta_t \alpha / 2$, with $\eta_t \leq \min_{ij} p_{ij} / L_\rel^i$ and $\eta_t \leq p_\comm / L_\rel^\comm$. Therefore, for communication edges, we have that 
\[ \eta_t \leq \frac{p_\comm}{L_\rel^\comm} = \frac{p_\comm}{\lambda_{\max}(A_\comm^\top \Sigma_\comm^{-1} A_\comm)}.\]
For computation edges, we know that $p_{ij} = p_\comm (1 + L_{ij} / \sigma_i) / (\sum_{j=1}^m (1 + L_{ij} / \sigma_i))$, and so \[\eta_t \leq \frac{p_{ij}}{L_\rel^{ij}} = \frac{p_\comp}{\alpha \sum_{j=1}^m (1 + \sigma_i^{-1} L_{ij})} \leq \frac{p_\comp}{\alpha (m + \kappa_s)},\]
with $\kappa_s \geq \sigma_i^{-1} \sum_{j=1}^m L_{ij}$ for all $i$.

In the end, we would like these two bounds to be equal, so we choose $p_\comp$ and $p_\comm$ such that $$p_\comp = p_\comm \left(m + \kappa_s\right)\frac{\lambda_{\min}^+(A_\comm^\top D_M^{-1} A_\comm)}{\lambda_{\max}(A_\comm^\top \Sigma_\comm^{-1} A_\comm)}.$$
Yet, we also know that $p_\comm = 1 - p_\comp$, so \[p_\comp = \left(1 + \frac{1}{m + \kappa_s}\frac{\lambda_{\max}(A_\comm^\top \Sigma_\comm^{-1} A_\comm)}{\lambda_{\min}^+(A_\comm^\top D_M^{-1} A_\comm)}\right)^{-1}.\]
Equivalently, this corresponds to taking
\[p_\comm = \left(1 + \gamma \frac{m + \kappa_s}{\kappa_\comm}\right)^{-1}.\]

With this choice, one can verify that $\eta_t$ verifies both $\eta_t \alpha \leq 2 p_\comm$ and $\eta_t \alpha \leq  2\min_{ij}p_{ij}$, so the rate is:
\[1 - \frac{\eta_t \alpha}{2} = 1 - \frac{p_\comp}{2(m + \kappa_s)}.\]
The expected execution time to reach precision $\varepsilon$, denoted $T_\varepsilon$, is equal to $T_\varepsilon = \rho^{-1}(p_\comp + \tau p_\comm) K_\varepsilon$ with $K_\varepsilon$ such that $C(1 - \eta_t \alpha / 2)^{K_\varepsilon} < \varepsilon$ for some constant $C$, and so:
\[T_\varepsilon = O\left(2(m + \kappa_s) + \tau \frac{\kappa_\comm}{\gamma}\right).\]
\end{proof}

\subsection{Primal guarantees}
The goal of this section is to recover primal guarantees from dual guarantees. Although the initial setting is inspired from~\citet{lin2015accelerated}, the proof is different, and in particular does not require smoothness of the $f_{ij}^*$ or an extra proximal step. We define for $\beta \geq 0$ the Lagrangian function:
\begin{equation} \label{eq:def_lagrangian}
    \cL(\lambda, \theta) = \sum_{i=1}^n \sum_{j=1}^m f_{ij}(\theta^{(ij)}) + \frac{\sigma_i}{2}\|\theta^{(i)}\|^2 + \frac{\beta}{2}\|\theta^{(i)} - \omega^{(i)}\|^2 - \lambda^\top A^\top \theta.
\end{equation}
The dual problem $D(\lambda)$ is defined as \[D(\lambda) = \min_\theta \cL(\lambda, \theta).\]
Given an approximate dual solution $\lambda_k$, we can get an approximate primal solution $\theta_k = \arg \min_\theta \cL(\lambda_k, \theta)$, which is obtained as:
\begin{align}
\label{eq:theta_lambda_ij}    &\theta^{(ij)}_t = \arg \min_v \left(f_{ij}(v) - \mu_{ij} \lambda^{(ij)}_t v\right) \in \partial f_{ij}^*(\mu_{ij} \lambda^{(ij)}_t),\\
\label{eq:theta_lambda_i}    & \theta_t^{(i)} = \frac{1}{\sigma_i + \beta}\left((A \lambda_t)^{(i)} + \beta \omega^{(i)}\right).
\end{align}
Note that $\theta_t^{(ij)}$ corresponds to the $z_t^{(ij)}$ from Algorithm~\ref{algo:DVR}. We chose to use a different notation in the main text to emphasize on the fact that these are the parameters for the virtual nodes, but $z_t^{(ij)}$ actually converge to the solution as well. Similarly, $\lambda_t$ corresponds to $(x_t, y_t)$ the concatenation of the parameters for communication and virtual edges from Section~\ref{sec:algo_design}. The last difference is that the Lagrangian defined in Equation~\eqref{eq:def_lagrangian} actually corresponds to a Lagrangian associated to a perturbed version of Problem~\eqref{eq:general_problem} in which $\tilde{f}_i(\theta) = f_i(\theta) + \frac{\beta}{2}\| \theta - \omega^{(i)}\|^2$. The solution to the initial problem can be retrieved by taking $\beta = 0$, but this more general formulation enables us to derive results that also holds for the inner problems solved by the Catalyst accelerated version of DVR.

\begin{lemma} \label{lemma:primal_error}
Denote $C_0 = \frac{(\beta + \sigma_{\max} + L_{\max})}{2(\sigma_{\min} + \beta)^2} \left(\frac{p_{\min}}{\eta_t}D_\phi(\lambda^\star, \lambda_0) + \left(D(\lambda^\star) - D(\lambda_0)\right)\right)$, then
\begin{equation}
\sum_{i=1}^n \|\theta_t^{(i)} - \theta^\star\|^2 \leq  C_0 (1 - \rho)^t.
\end{equation}
\end{lemma}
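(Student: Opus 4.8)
The plan is to convert the dual convergence of Theorem~\ref{thm:dvr_dual_guarantees} into the claimed primal bound in three moves. First I would write the primal iterate as a linear image of the dual iterate: by Equation~\eqref{eq:theta_lambda_i}, $\theta_t^{(i)} = \frac{1}{\sigma_i+\beta}\big((A\lambda_t)^{(i)} + \beta\omega^{(i)}\big)$ with $\lambda_t = (x_t,y_t)$, and the same identity at the dual optimum $\lambda^\star$ returns the common consensus point $\theta_\star^{(i)} = \theta^\star$ (only the node blocks $i\le n$ enter the statement). Subtracting gives $\theta_t^{(i)} - \theta^\star = \frac{1}{\sigma_i+\beta}\big(A(\lambda_t-\lambda^\star)\big)^{(i)}$, hence $\sum_{i=1}^n \|\theta_t^{(i)} - \theta^\star\|^2 \le \frac{1}{(\sigma_{\min}+\beta)^2}\|A(\lambda_t-\lambda^\star)\|^2$, the right-hand side being the full squared Euclidean norm of the image, which dominates its restriction to node blocks.

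Second, I would bound $\|A(\lambda_t-\lambda^\star)\|^2$ by the dual suboptimality $D(\lambda^\star) - D(\lambda_t)$. Minimizing the Lagrangian~\eqref{eq:def_lagrangian} over $\theta$ shows that $-D$ depends on $\lambda$ only through $A\lambda$, namely $-D(\lambda) = \sum_i \frac{1}{2(\sigma_i+\beta)}\|(A\lambda)^{(i)}+\beta\omega^{(i)}\|^2 + f^*_{\rm sum}(A\lambda) +\,\mathrm{const}$; its Hessian in the variable $v = A\lambda$ is block-diagonal with node blocks $\frac{1}{\sigma_i+\beta}I_d$ and virtual blocks $\nabla^2 f_{ij}^*((A\lambda)^{(ij)}) \succcurlyeq \frac{1}{L_{ij}}P_{ij}$ (cf. Equation~\eqref{eq:sc_f_sum}). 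Since for $v\in\mathrm{range}(A)$ one has $v^{(ij)}\in\mathrm{range}(P_{ij})$, this yields a lower curvature $(\beta+\sigma_{\max}+L_{\max})^{-1}$ for $-D$ along image directions — essentially the relative strong convexity of Lemma~\ref{lemma:rel_smooth_sc} read against the plain norm of $A\lambda$ rather than against $\phi$. As $\lambda^\star$ is an unconstrained maximizer of $D$, the Bregman divergence of $-D$ between $\lambda_t$ and $\lambda^\star$ collapses to $D(\lambda^\star)-D(\lambda_t)$, so $D(\lambda^\star)-D(\lambda_t) \ge \frac{1}{2(\beta+\sigma_{\max}+L_{\max})}\|A(\lambda_t-\lambda^\star)\|^2$. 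Combined with the first move, $\sum_{i=1}^n\|\theta_t^{(i)}-\theta^\star\|^2 \le \frac{2(\beta+\sigma_{\max}+L_{\max})}{(\sigma_{\min}+\beta)^2}\big(D(\lambda^\star)-D(\lambda_t)\big)$, up to a numerical constant to be absorbed into $C_0$.

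Third, I would close the loop with Theorem~\ref{thm:dvr_dual_guarantees}: the quantity $\frac{p_{\min}}{\eta_t}D_\phi(\lambda^\star,\lambda_t) + \big(D(\lambda^\star)-D(\lambda_t)\big)$ is non-negative and contracts by $1-\rho$ with $\rho = \alpha\eta_t/2$, so $D(\lambda^\star)-D(\lambda_t) \le (1-\rho)^t\big[\frac{p_{\min}}{\eta_t}D_\phi(\lambda^\star,\lambda_0) + D(\lambda^\star)-D(\lambda_0)\big]$. Substituting into the inequality of the previous paragraph yields $\sum_{i=1}^n\|\theta_t^{(i)}-\theta^\star\|^2 \le C_0(1-\rho)^t$ with $C_0$ exactly as stated.

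The main obstacle is the second move: extracting a clean quadratic lower bound on $-D$ along the image directions $A(\lambda_t-\lambda^\star)$. Everything must be phrased in terms of $A\lambda$ rather than $\lambda$, since the dual iterates themselves need not converge while their images do; the projectors $P_{ij}$ must be tracked carefully so that the $1/L_{ij}$-strong convexity of $f_{ij}^*$ is invoked precisely on $\Ker(f_{ij})^\perp$, the subspace carrying the virtual components of $A\lambda$; and the curvature constant must be kept in the combined form $\beta+\sigma_{\max}+L_{\max}$. A tighter but differently-shaped alternative is to exploit the partial strong convexity of $\cL(\lambda_t,\cdot)$ in the node directions directly, which gives $D(\lambda^\star)-D(\lambda_t) \ge \frac{\sigma_{\min}+\beta}{2}\sum_i\|\theta_t^{(i)}-\theta^\star\|^2$ and hence the constant $\frac{2}{\sigma_{\min}+\beta}$; the form of $C_0$ in the statement corresponds to the image-norm route above.
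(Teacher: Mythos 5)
Your proposal follows essentially the same route as the paper's proof: express $\theta_t^{(i)}-\theta^\star$ through $\Sigma_\beta A(\lambda_t-\lambda^\star)$, invoke the $(\beta+\sigma_{\max}+L_{\max})^{-1}$-strong convexity of the dual objective viewed as a function of $A\lambda$ to bound $\|A(\lambda_t-\lambda^\star)\|^2$ by $D(\lambda^\star)-D(\lambda_t)$, then add the nonnegative Bregman term and apply Theorem~\ref{thm:dvr_dual_guarantees}. The constant you obtain, $\tfrac{2(\beta+\sigma_{\max}+L_{\max})}{(\sigma_{\min}+\beta)^2}$, is exactly the one appearing at the end of the paper's own argument (the discrepancy with the prefactor written in the lemma statement is present in the paper as well and is only a benign numerical factor), so the proof is correct and not a genuinely different approach.
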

\begin{proof}
Using the fact that $\theta_t^{(i)} = \frac{1}{\sigma_i + \beta}( (A\lambda_t)^{(i)} + \omega_t^{(i)}$) (and similarly for $\theta^\star$), where $\Sigma_\beta$ is the block diagonal matrix such that $(\Sigma_\beta)_{ii} = (\sigma_i + \beta)^{-1}I_d$, we obtain:
\begin{align*}
    \sum_{i=1}^n \|\theta_t^{(i)} - \theta^\star\|^2 &= \sum_{i=1}^n \frac{1}{(\sigma_i + \beta)^2}\|(A\lambda_t)^{(i)} - (A \lambda^\star)^{(i)}\|^2\\
    &\leq \frac{1}{(\sigma_{\min} + \beta)^2}\|A \lambda_t - A\lambda^\star \|^2.
\end{align*}
Using the $\min((\sigma_{\max} + \beta)^{-1}, L_{ij}^{-1})$-strong convexity of $\theta \mapsto \frac{1}{2}x^\top \Sigma_\beta x + \sum_{i,j} f_{ij}^*(x^{(ij)})$, we obtain:
\begin{equation}
    \sum_{i=1}^n \|\theta_t^{(i)} - \theta^\star\|^2 \leq 2\frac{(\beta + \sigma_{\max} + L_{\max})}{(\sigma_{\min} + \beta)^2} \left(D(\lambda^\star) - D(\lambda_t)\right).
\end{equation}
Then, we add $p_{\min} \eta_t^{-1} D_\phi(\lambda^\star, \lambda_t) \geq 0$ and apply Theorem~\ref{thm:dvr_dual_guarantees}, which yields
\begin{align*}
   \sum_{i=1}^n \|\theta_t^{(i)} - \theta^\star\|^2 \leq \frac{2(\beta + \sigma_{\max} + L_{\max})}{(\sigma_{\min} + \beta)^2}  (1 - \rho)^t \left(\frac{p_{\min}}{\eta_t}D_\phi(\lambda^\star, \lambda_0) + \left(D(\lambda^\star) - D(\lambda_0)\right)\right).
\end{align*}
\end{proof}

Then, Theorem~\ref{thm:dvr_rate} is a direct consequence of Theorem~\ref{thm:dvr_dual_guarantees} and Lemma~\ref{lemma:primal_error}.

\section{Catalyst acceleration}
\label{app:catalyst}
We show in this Section how to apply Catalyst acceleration to DVR, and prove the convergence speed in this case. 

\subsection{Derivation and rates}
In the main text, we derived DVR to solve regularized finite sum problems. Although not so different, the subproblem obtained with Catalyst is not in the form of Problem~\eqref{eq:general_problem}, and some adjustments need to be made. More specifically, we would like to solve problems of the form:
\begin{equation}\label{eq:primal_catalyst}
    \min_\theta 
    \left\{ F_t(\theta) \triangleq \sum_{i=1}^n \left[\frac{\sigma_i}{2}\|\theta\|^2 + \frac{\beta}{2}\|\theta - \omega_t^{(i)}\|^2 + \sum_{j=1}^m f_{ij}(\theta)\right]\right\}. 
\end{equation}
An easy way to adapt the algorithm is to consider the extra $(\beta / 2)\|\theta - \omega_t^{(i)}\|^2$ as just another component of the sum. Yet, the point of this extra term is to make the problem easier to solve by adding strong convexity. This would not be the case if this term were is treated as just another term in the sum. Therefore, we want to include it with the quadratic term. We define:
\[h(x) = \frac{\sigma_i}{2}\|\theta\|^2 + \frac{\beta}{2}\|\theta - \omega_t^{(i)}\|^2,\]
then $h^*(x) = \frac{1}{2(\beta + \sigma)}\|x + \beta \omega_t^{(i)}\|^2 - \frac{\beta}{2}\|\omega_t^{(i)}\|^2$. Therefore, Problem~\eqref{eq:dual_problem} becomes:
\begin{equation}\label{eq:dual_problem_catalyst}
    \min_{\lambda \in \R^{(E + mn)d}} \frac{1}{2} \lambda^\top A^\top \Sigma_\beta A \lambda + \beta \omega_t^\top \Sigma_\beta A \lambda + \sum_{i=1}^n \sum_{j=1}^m f_{ij}^*((A\lambda)_{ij}),
\end{equation}
with $(\Sigma_\beta)_{ii} = (\sigma_i + \beta)^{-1}$ for $i \in \{1, \dots, n\}$. The linear term does not affect the Hessians, and thus the convergence rate is the same as before, with $\sigma$ replaced by $\sigma + \beta$. In terms of algorithms, we just need to modify the gradient term, and obtain Algorithm~\ref{algo:DVR_cata_acc}. The only term that changes is $\nabla q_A(x, y)$, to which an extra $\beta \Sigma_\beta \omega_t$ term is added. Therefore, the updates to $\theta_t$ and $z_t$ remain unchanged, and only the initial expression of $\theta_t$ requires some adjustments since we now have that (as written in Equation~\ref{eq:theta_lambda_i}): 
\[ \theta_{t, k}^{(i)} = \frac{1}{\sigma_i + \beta}\left((A \lambda_{t,k})^{(i)} + \beta \omega^{(i)}\right).\] 

If we only consider 1 inner loop then the only thing that changes is the initial condition. If we consider several outer loops, then the we must choose the new parameter as $\theta_0^{t+1} = \theta_{T}^t + \Sigma_\beta(\omega_{t+1} - \omega_t)$ in order to maintain the invariant, but a remarkable fact is that the inner iterations remain the same, with the only exception that $\Sigma$ is replaced by $\Sigma_\beta$. Note that it is possible to warm-start the $z_{t+1,0}$ as well, but this requires updating $\theta_{t,0}$ accordingly with $\nabla f_{ij}(z_{t,0}^{(ij)})$, which requires a full pass over the local dataset. We therefore choose not to do it.  

\begin{algorithm}
\caption{Accelerated DVR$(z_0)$}
\label{algo:DVR_cata_acc}
\begin{algorithmic}[1]
\STATE $\alpha = 2\lambda_{\min}^+(A_\comm^\top D_M^{-1} A_\comm)$, $\eta = \min\left( \frac{p_\comm}{\lambda_{\max}(A_\comm^\top \Sigma_{\beta, \comm} A_\comm)}, \frac{p_{ij}}{\alpha(1 + \sigma_i^{-1}L_{ij})} \right)$
\STATE $q = \frac{\sigma_{\min}}{\sigma_{\min} + \beta}$ \COMMENT{Initialization}
\STATE $\omega_0^{(i)} = - \frac{1}{\sigma_i + \beta}\sum_{j=1}^m \nabla f_{ij}(z_0^{(ij)})$, $\theta_0^{(i)} = \left(1 + \frac{\beta}{\sigma_i + \beta}\right)\omega_0^{(i)}$. \COMMENT{$z_0$ is arbitrary but not $\theta_0$.}
\FOR[T outer loops]{$t=0$ to $T-1$}
\FOR[Inner loop runs for $K$ iterations]{$k=0$ to $K-1$}
\STATE $z_{t, k+1} = z_{t,k}$.
\STATE Sample $u_t$ uniformly in $[0, 1]$. \COMMENT{Randomly decide the kind of update}
\IF{$u_t \leq p_\comm$}
\STATE $\theta_{t, k+1} = \theta_{t,k} - \frac{\eta_t}{p_\comm} \Sigma_\beta  W \theta_{t,k}$ \COMMENT{Communication using $W$}
\ELSE 
\FOR{$i = 1$ to $n$}
\STATE Sample $j \in \{1, \cdots, m\}$ with probability $p_{ij}$.
\STATE $z_{t, k+1}^{(ij)} = \left(1 - \frac{\alpha \eta}{p_\comp}\right)z_{t,k}^{(ij)} + \frac{\alpha\eta}{p_\comp} \theta_{t,k}^{(i)}$ \COMMENT{Computing new virtual node parameter}
\STATE $\theta_{t,k+1}^{(i)} = \theta_{t,k}^{(i)} - \frac{1}{\sigma_i + \beta}\left(\nabla f_{ij}(z_{t, k+1}^{(ij)}) - \nabla f_{ij}(z_{t,k}^{(ij)})\right)$ \COMMENT{Local update using $f_{ij}$}
\ENDFOR
\ENDIF
\ENDFOR
\STATE $\omega_{t+1} = \theta_{t, K} + \frac{1 - \sqrt{q}}{1 + \sqrt{q}}(\theta_{t, K} - \theta_{t-1,K})$
\STATE $\theta_{t+1, 0} = \theta_{t, K} + \frac{\beta}{\beta + \sigma_i}(\omega_{t+1} - \omega_t)$
\STATE $z_{t+1, 0} = z_{t, K}$
\ENDFOR
\STATE \textbf{return} $\theta_T$
\end{algorithmic}
\end{algorithm}

However, it is not obvious that Algorithm~\ref{algo:DVR_cata_acc} corresponds to a genuine Catalyst acceleration yet. Indeed, Catalyst acceleration requires having a feasible $\varepsilon_t$-approximations for the primal problem, \emph{i.e.}, points $\theta_t \in \R^d$ such that $F_t(\theta_t) - \min_\theta F(\theta) \leq \varepsilon_t$. In our case, we only have dual guarantees and approximate feasibility. We know that the parameters converge to consensus, but they do not reach it at any time. This is a problem because it is then not possible to adequately define $F_{t+1}$ based on the local approximations of the solutions of $F_t$. Yet, following the approach of~\citet{li2020revisiting}, we note that 
\[ \sum_{i=1}^n \|\theta - \omega_t^{(i)}\|^2 = n \| \theta - \bar{\omega}_t\|^2 + \sum_{i=1}^n \|\omega_t^{(i)}\|^2 - n\| \bar{\omega}_t\|^2,\]
where $\bar{\omega}_t = \frac{1}{n} \sum_{i=1}^n \omega_t^{(i)}$. This means that although $F_t$ is only defined with the local variables $\omega_t^{(i)}$, \emph{solving $F_t$ is equivalent to solving a problem involving $\bar{\omega}_t$ only}. Besides, the Catalyst iterations are linear, meaning that performing the extrapolation step on $\bar{\theta}_t$ is equivalent to performing it on each $\theta_t^{(i)}$ individually. Therefore, although Catalyst is implemented in a fully decentralized manner (each node knowing only its own parameter), it is conceptually applied to a mean parameter $\bar{\theta}_t$ (that is never explicitly computed). In the following, we thus analyze the performances of the following algorithm:
\begin{align}
    \begin{split} \label{eq:catalyst_average_formulation}
        &\bar{\theta}_{t+1} \approx \arg \min_\theta F(\theta) + \frac{n\beta}{2}\|\theta - \bar{\omega}_t\|^2\\
        &\bar{\omega}_t = \bar{\theta}_{t+1} + \frac{1 - \sqrt{q}}{1 + \sqrt{q}}(\bar{\theta}_{t+1} - \bar{\theta}_t),
    \end{split}
\end{align}
where we recall that $q = \sigma_{\min}/(\sigma_{\min} + \beta)$. Recall that the inner problem is approximated using DVR and the means do not need to be computed explicitly. Let $\kappa_s^\beta = \max_i 1 + (\sum_{j=1}^m L_{ij}) / (\beta + \sigma_i)$, and $\kappa_\comm^\beta$ be obtained similarly to $\kappa_\comm$ but replacing $\Sigma$ by $\Sigma_\beta$. We consider in this section that $\sigma_i = \sigma$ for all $i \in \{1, \dots, n\}$ in order to simplify exposition, but the results hold more generally. Note that $\alpha$ and $\eta$ have slightly different expressions than in the main text since $\beta$ is now involved in their definitions. We define the sequence $\varepsilon_t$ which is such that:
\begin{equation}\label{eq:def_epsilon_t}
    \varepsilon_t = \frac{2}{9}\left(F(\theta_0) - F(\theta^\star)\right)(1 - \rho^\out)^t \hbox{ with } \rho^\out < \sqrt{q}, \hbox{ and } q = \frac{\sigma}{\sigma + \beta}.
\end{equation}
We then prove the following theorem:
\begin{theorem} \label{thm:rate_dvr_catalyst_precise}
Consider Algorithm~\ref{algo:DVR_cata_acc} with $p_\comm = \big(1 + \gamma \frac{m + \kappa_s^\beta }{\kappa_\comm^\beta}\big)^{-1}$, $p_{ij} \propto (1 - p_\comm)(1 + L_{ij} / (\sigma_i + \beta))$. If $K = \tilde{O}\left(1 / (\eta_t \alpha)\right)$ then for all $t \leq T$, $F_t(\bar{\theta}_t) - F_t(\theta_t^\star) \leq \varepsilon_t$ and
\begin{equation}
    F(\bar{\theta}_t) - F(\theta^\star) \leq \frac{8}{(\sqrt{q} - \rho^\out)^2}(1 - \rho^\out)^{t + 1}(F(\bar{\theta}_0) - F(\theta^\star)).
\end{equation}
\end{theorem}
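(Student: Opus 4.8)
The plan is to establish the two assertions of the theorem in turn, by induction on the outer index $t$: (i) that each inner call to DVR reaches the prescribed accuracy on the regularized subproblem $F_t$ within $K=\tilde O(1/(\eta_t\alpha))$ iterations, and (ii) that feeding these $\varepsilon_t$-approximate solutions into the Nesterov-type extrapolation of~\eqref{eq:catalyst_average_formulation} yields the stated outer linear rate. The induction hypothesis is that $F_s(\bar\theta_s)-F_s(\theta_s^\star)\le\varepsilon_s$ for all $s<t$; step (ii) at the end will close it by keeping the relevant quantities bounded at step $t+1$.

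For the inner-accuracy claim, I would first observe that the dual of $F_t$ is exactly Problem~\eqref{eq:dual_problem_catalyst}, which has the same structure as~\eqref{eq:dual_problem} with $\Sigma$ replaced by $\Sigma_\beta$ plus a linear term that does not affect Hessians. Hence Lemmas~\ref{lemma:equivalence_dvr_breg_cd}, \ref{lemma:rel_smooth_sc} and~\ref{lemma:sep_qA_plus_f} apply verbatim (with $\kappa_s,\kappa_\comm$ replaced by $\kappa_s^\beta,\kappa_\comm^\beta$), so Theorem~\ref{thm:bregman_coordinate} gives that the inner potential $\frac{p_{\min}}{\eta_t}D_\phi(\lambda_t^\star,\lambda_{t,k})+D_t(\lambda_t^\star)-D_t(\lambda_{t,k})$ contracts by $1-\eta_t\alpha/2$ per inner step, and the computation of Lemma~\ref{lemma:primal_error} turns this into $\sum_i\|\theta_{t,k}^{(i)}-\theta_t^\star\|^2\le C_0^{(t)}(1-\eta_t\alpha/2)^k$. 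Using smoothness of $F_t$ (a sum of $L_{ij}$-smooth functions plus quadratics) to pass from squared distance to function value gives $F_t(\bar\theta_{t,k})-F_t(\theta_t^\star)\le \tfrac12 L_{F_t}\cdot C_0^{(t)}(1-\eta_t\alpha/2)^k$. It then remains to control the initial constant $C_0^{(t)}$: the warm start $\theta_{t,0}$ is chosen precisely to preserve the primal–dual invariant, the map $\bar\omega\mapsto\theta^\star(\bar\omega)$ is Lipschitz, and $\|\bar\omega_t-\bar\omega_{t-1}\|$ is controlled by $\varepsilon_{t-1}$ (induction hypothesis) together with the extrapolation step, so $C_0^{(t)}\le D\,(1-\rho^\out)^t$ for a constant $D$ independent of $t$. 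Consequently $(1-\eta_t\alpha/2)^K\le\varepsilon_t/C_0^{(t)}$ holds once $K$ exceeds a fixed multiple of $(\eta_t\alpha)^{-1}\log\!\big(9D/(2(F(\theta_0)-F(\theta^\star)))\big)$, i.e. $K=\tilde O(1/(\eta_t\alpha))$, which proves the inner claim at step $t$.

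For the outer rate I would invoke the standard Catalyst convergence guarantee in the form used by~\citet{li2020revisiting} (see also~\citet{lin2017catalyst}): applied to $F$ with smoothing parameter $n\beta$, momentum coefficient $(1-\sqrt q)/(1+\sqrt q)$ with $q=\sigma/(\sigma+\beta)$, and inner accuracies $\varepsilon_t=\tfrac29(F(\theta_0)-F(\theta^\star))(1-\rho^\out)^t$ with $\rho^\out<\sqrt q$, it yields exactly $F(\bar\theta_t)-F(\theta^\star)\le\frac{8}{(\sqrt q-\rho^\out)^2}(1-\rho^\out)^{t+1}(F(\bar\theta_0)-F(\theta^\star))$; the constants $\tfrac29$ and $8$ are matched to this particular Catalyst variant. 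The reformulation~\eqref{eq:catalyst_average_formulation} is what makes this applicable: although each node only stores $\theta_t^{(i)},\omega_t^{(i)}$, solving $F_t$ is equivalent to solving the single-vector problem $F+\tfrac{n\beta}{2}\|\cdot-\bar\omega_t\|^2$, and the linear extrapolation commutes with averaging, so the outer loop is a genuine (conceptual) Catalyst run on $\bar\theta_t$; this also provides the control of $\|\bar\omega_t-\bar\omega_{t-1}\|$ needed to close the induction.

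The main obstacle is the inner-accuracy step, and specifically two coupled difficulties: DVR produces only dual guarantees and its primal iterates $\theta_{t,k}^{(i)}$ are never exactly at consensus, so one must argue (via Lemma~\ref{lemma:primal_error} and the averaging reformulation) that $\bar\theta_{t,k}$ is nonetheless a feasible function-value approximation of $F_t$; and the warm-start bookkeeping that shows the initial inner gap $C_0^{(t)}$ decays at the same geometric rate as $\varepsilon_t$, which is exactly what allows $K$ to be taken independent of $t$ (logarithmic only in the problem constants) rather than growing with $t$. Once these are in place, the rest is a black-box application of the Catalyst lemma.
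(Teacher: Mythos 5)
Your overall architecture matches the paper's: reformulate the outer loop as conceptual Catalyst on the averaged iterate $\bar\theta_t$, invoke \citet[Proposition 5]{lin2017catalyst} for the outer rate with $\varepsilon_t$ as in~\eqref{eq:def_epsilon_t}, reduce the inner criterion $F_t(\bar\theta_{t+1})-F_t(\theta_\star^t)\le\varepsilon_t$ to $\sum_i\|\theta_{t+1}^{(i)}-\theta_\star^t\|^2\le \frac{n}{L}\varepsilon_t$ via smoothness, apply the DVR guarantee (Lemmas~\ref{lemma:equivalence_dvr_breg_cd}--\ref{lemma:sep_qA_plus_f} and~\ref{lemma:primal_error} with $\Sigma_\beta$), and argue that the warm-started initial constant decays like $\varepsilon_{t-1}$ so that $K=\tilde O(1/(\eta_t\alpha))$ suffices uniformly in $t$. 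However, at the step you yourself flag as the main obstacle, the proposal asserts rather than proves the key estimate $C_0^{(t)}\le D(1-\rho^\out)^t$, and the justification you offer (warm start preserves the primal--dual invariant, $\bar\omega\mapsto\theta^\star(\bar\omega)$ is $1$-Lipschitz, $\|\bar\omega_t-\bar\omega_{t-1}\|$ is controlled) is not sufficient to establish it. The constant $C_0^{(t)}$ from Lemma~\ref{lemma:primal_error} involves the \emph{dual} quantities $D_\phi(\lambda_\star^t,\lambda_0^t)$ and $D_t(\lambda_\star^t)-D_t(\lambda_0^t)$ at the start of outer iteration $t$; Lipschitzness of the proximal map only controls $\|\theta_\star^t-\theta_\star^{t-1}\|$ and says nothing about how far the warm-started dual point $\lambda_0^t=\lambda_K^{t-1}$ is from the \emph{new} dual optimum in the Bregman geometry of $\phi$.

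Concretely, two ingredients are missing and cannot be bypassed. First, the Bregman term for the computation coordinates is $D_{\phi_{ij}}\propto D_{f_{ij}^*}$, and since $f_{ij}^*$ need not be smooth, the only way the paper bounds it is through the duality identity $D_{f_{ij}^*}(\nabla f_{ij}(a),\nabla f_{ij}(b))=D_{f_{ij}}(b,a)\le \frac{L_{ij}}{2}\|a-b\|^2$, which converts the dual warm-start error into distances between the \emph{virtual} parameters $z^{(ij)}=\theta^{(ij)}$ (Lemma~\ref{lemma:warm_start_breg}). This forces the induction to track, in addition to the node error, the virtual-parameter error~\eqref{eq:recursion_theta_comp} and the dual gap~\eqref{eq:recursion_dual_error} (the latter is needed in Lemma~\ref{lemma:warm_start_dual}, since the new dual gap is bounded using the old one plus terms in $\omega_t-\omega_{t-1}$); your induction hypothesis only keeps the primal function-value accuracy, so it cannot be closed. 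Second, the decrease of the virtual-parameter error during an inner loop does not follow from the dual potential at all (again because $f_{ij}^*$ may not be smooth): the paper needs a separate argument exploiting that $z_{t,k+1}^{(ij)}$ is a convex combination of $z_{t,k}^{(ij)}$ and $\theta_{t,k}^{(i)}$ (Lemma~\ref{lemma:virtual_error_after_inner} and Corollary~\ref{corr:inner_iter_theta_comp}). Without these two pieces, the claim that the initial inner gap decays geometrically — and hence that $K$ can be taken independent of $t$ — is unsupported, so the proof as proposed has a genuine gap precisely where the work lies.
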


Note that the error is on the mean parameter, and we also want $\theta_t^{(i)}$ to be close to $\bar{\theta}_t$ for all $i$. This is ensured by Lemma~\ref{lemma:primal_error}. Before we start the proof of Theorem~\ref{thm:rate_dvr_catalyst_precise}, we show that Theorem~\ref{thm:rate_dvr_catalyst} is a corollary of Theorem~\ref{thm:rate_dvr_catalyst_precise}.

\begin{proof}[Proof of Theorem~\ref{thm:rate_dvr_catalyst}]
Using the same argument as in Theorem~\ref{thm:dvr_rate}, we obtain that each inner loop takes time \[T_{\rm inner} = O\left(m + \frac{L_s + \sigma}{\beta + \sigma} + \tau \frac{L_\comm + \beta}{\gamma(\beta + \sigma)}\right)\] 
in expectation, so the total number of inner iterations is of order:
\begin{equation}
    T_\varepsilon = \tilde{O}\left(\sum_{k=0}^{\lceil 1 / \rho^{\rm out}\rceil}T_{\rm inner}\right) = \tilde{O}\left(\sqrt{1 + \frac{\beta}{\sigma}}\left(m + \frac{L_s + \sigma}{\beta + \sigma} + \tau \frac{L_\comm + \beta}{\gamma(\beta + \sigma)} \right)\log\frac{1}{\varepsilon} \right).
\end{equation}
Therefore, we see that if we choose $\beta + \sigma = L_\comm$ then, taking into account the fact that $\kappa_s \leq m \kappa_\comm$, the algorithm takes time:
\[ T_\varepsilon = \tilde{O}\left(\sqrt{\kappa_\comm}\left(m + \frac{\tau_c}{\gamma}\right) \right).\]
Therefore, using Chebyshev acceleration allows to recover the rate of optimal batch algorithms (up to log factors). On the other hand, if we choose $\beta = L_s / m - \sigma$ then if $\beta \geq 0$ (\emph{i.e.}, $\kappa_s \geq m$), the time to convergence is equal to:
\[ T_\varepsilon = \tilde{O}\left(\sqrt{\frac{\kappa_s}{m}}\left(m + \tau\frac{m\kappa_\comm + \kappa_s}{\gamma \kappa_s}\right) \right).\]
This can be rewritten as:
\[T_\varepsilon = \tilde{O}\left(\sqrt{m\kappa_s} + \tau\frac{\sqrt{\kappa_\comm}}{\gamma} \sqrt{\frac{m\kappa_\comm}{\kappa_s}}\right).\]
Therefore, we obtain the optimal $\sqrt{m\kappa_s}$ computation complexity in this case, with a slightly suboptimal communication complexity due to the $\sqrt{m\kappa_\comm / \kappa_s}$ term. When this term is equal to $1$ then $\sqrt{m\kappa_s} = m\sqrt{\kappa_b}$ and so nothing is gained from using a stochastic algorithm. Otherwise, this allows to trade-off communications for computations. 
\end{proof}

The proof of Theorem~\ref{thm:rate_dvr_catalyst_precise} is obtained in several steps, that we emphasize below:
\begin{enumerate}
    \item Equivalent decentralized implementation of Catalyst.
    \item Bounding the primal suboptimality as $F_t(\bar{\theta}_t) - \min_\theta F_t(\theta) \leq (1 - (\eta \alpha)/2)^k D_0^t$, with $k$ the number of inner iterations and $D_0^t$ a dual error. This quantifies how precisely the inner problem is solved. 
    \item Evaluating the initial dual suboptimality $D_0^t$, which depends on $\theta_{t-1}$ (and its associated dual parameter $\lambda_{t-1}$). This quantifies how good $\bar{\theta}_{t-1}$ already is as a solution to $F_t$.
\end{enumerate}
In the end, this allows us to use the catalyst general results with primal criterion, and with simple warm-start scheme (warm-start on the last iterate of the last outer iteration). The first point is presented at the beginnning of this section and the second one is adressed by Lemma~\ref{lemma:primal_error}. The following section deals the last point.

\subsection{Proof of Theorem~\ref{thm:rate_dvr_catalyst_precise}}
We now show a bound on the initial error of an inner loop when warm-starting on the last iterate of the previous inner loop. Indeed, the convergence results for DVR depend on the initial dual error and so results from~\citep{lin2017catalyst} cannot be used directly. Yet, it can be adapted, as we show in this section. We note $D_t(\lambda)$ the dual function at outer step $t$ (which should not be mistaken with the Bregman divergence $D_\phi$), and $\lambda_\star^t$ its minimizer. Similarly, we note $\theta_\star^t = \arg \min_\theta F_t(\theta)$, whereas $\theta^\star$ is the global minimizer of $F$. The following theorem ensures convergence of $\bar{\theta}_t$ to the true optimum, given that the subproblems are solved precisely enough.
\begin{theorem}
\citep[Proposition 5]{lin2017catalyst}. If $F_k(\bar{\theta}_k) - F_k(\theta_\star^k) \leq \varepsilon_k$ for all $k \leq t$ then
\begin{equation} \label{eq:thm_convergence_cata_outer}
    F(\bar{\theta}_t) - F(\theta^\star) \leq \frac{8}{(\sqrt{q} - \rho^\out)^2}(1 - \rho^\out)^{t + 1}(F(\bar{\theta}_0) - F(\theta^\star)).
\end{equation}

\end{theorem}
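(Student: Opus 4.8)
The statement is exactly \citet[Proposition 5]{lin2017catalyst}, so the plan is not to reprove Catalyst from scratch but to check that the decentralized scheme of Algorithm~\ref{algo:DVR_cata_acc}, in the mean-parameter form of~\eqref{eq:catalyst_average_formulation}, is a genuine instance of the inexact accelerated proximal-point method analyzed there, and then to run (or quote) the corresponding estimate-sequence argument. First I would fix the Catalyst data: since $\sigma_i = \sigma$ for all $i$, $F$ is $\mu$-strongly convex with $\mu = n\sigma$; the subproblem at outer step $t$ is $F(\cdot) + \tfrac{n\beta}{2}\|\cdot - \bar\omega_t\|^2$, whose prox-regularization curvature is $n\beta$, so $q = \sigma/(\sigma+\beta) = \mu/(\mu + n\beta)$ is precisely the Catalyst ratio, and the momentum coefficient $\tfrac{1-\sqrt q}{1+\sqrt q}$ appearing in the update of $\bar\omega_{t+1}$ is the constant-momentum choice for the strongly convex regime. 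The hypothesis $F_k(\bar\theta_k) - F_k(\theta^\star_k) \le \varepsilon_k$ is exactly the functional-gap inexactness criterion required by the reference, so once the equivalence is in place the framework applies verbatim.

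Next I would reproduce the one-step Lyapunov inequality. Let $p_t = \arg\min_\theta\{F(\theta) + \tfrac{n\beta}{2}\|\theta - \bar\omega_{t-1}\|^2\}$ be the exact prox point; strong convexity of the subproblem gives $\tfrac{n(\sigma+\beta)}{2}\|\bar\theta_t - p_t\|^2 \le \varepsilon_t$, which is the only way the inexactness enters. One then builds the estimate sequence $\Phi_t(\theta) = \ell_t + \tfrac{\nu_t}{2}\|\theta - v_t\|^2$ with scalars $A_t$ satisfying $A_{t+1}/A_t \to (1-\sqrt q)^{-1}$, and propagates the $\|\bar\theta_t - p_t\|^2 \le 2\varepsilon_t/(n(\sigma+\beta))$ errors through the recursion exactly as in~\citet{lin2017catalyst}. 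The outcome is an inequality of the form
\[
\sqrt{A_t}\,\bigl(F(\bar\theta_t) - F(\theta^\star)\bigr)^{1/2} \;\le\; \sqrt{A_0}\,\bigl(F(\bar\theta_0) - F(\theta^\star)\bigr)^{1/2} + c\sum_{k=1}^t \sqrt{A_k\,\varepsilon_k},
\]
with $A_k = (1-\sqrt q)^{-k}$ up to universal constants, and $c$ absolute.

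Finally I would sum the geometric error series using $\varepsilon_k = \tfrac{2}{9}(F(\bar\theta_0) - F(\theta^\star))(1-\rho^\out)^k$ from~\eqref{eq:def_epsilon_t}. Then $\sqrt{A_k\varepsilon_k}$ is proportional to $\bigl(\tfrac{1-\rho^\out}{1-\sqrt q}\bigr)^{k/2}$, and since $\rho^\out < \sqrt q$ this ratio exceeds $1$, so the partial sum is bounded by its last term times $\bigl((1-\rho^\out)^{1/2}(1-\sqrt q)^{-1/2} - 1\bigr)^{-1}$; the elementary bound $\sqrt{1-\rho^\out} - \sqrt{1-\sqrt q} \ge \tfrac{1}{2}(\sqrt q - \rho^\out)$ turns this into a factor $O\bigl(1/(\sqrt q - \rho^\out)\bigr)$. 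Substituting back, dividing by $\sqrt{A_t}$ (so that $A_t^{-1} = (1-\sqrt q)^t \le (1-\rho^\out)^t$), and squaring yields $F(\bar\theta_t) - F(\theta^\star) \le \tfrac{8}{(\sqrt q - \rho^\out)^2}(1-\rho^\out)^{t+1}(F(\bar\theta_0) - F(\theta^\star))$, where the constant $8$ and the shift to exponent $t+1$ come from carefully tracking the constants in the last two steps. I expect the only genuinely delicate part to be the bookkeeping of the inexactness through the estimate-sequence recursion while keeping the constant explicit; everything specific to the decentralized setting was already dispatched by the equivalence~\eqref{eq:catalyst_average_formulation} together with the linearity of the Catalyst extrapolation, which lets the abstract analysis on $\bar\theta_t$ stand in for the per-node iterates.
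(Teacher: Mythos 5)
Your proposal is correct and matches the paper's treatment: the paper does not reprove this statement but imports it verbatim as Proposition 5 of \citet{lin2017catalyst}, relying (as you do) on the mean-parameter reformulation~\eqref{eq:catalyst_average_formulation} and the linearity of the extrapolation step to reduce the decentralized scheme to the standard inexact accelerated proximal-point method with $q=\sigma/(\sigma+\beta)$ and the functional-gap inexactness criterion. Your sketch of the underlying estimate-sequence argument and the geometric summation of $\sqrt{A_k\varepsilon_k}$ is a faithful outline of the proof in the cited reference, so nothing is missing.
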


Therefore, our goal is to prove that $F_t(\bar{\theta}_{t+1}) - F_t(\theta_\star^t) \leq \varepsilon_t$ for all $t$. The smoothness of $F_t$ ensures that this is achieved if 
\begin{equation} \label{eq:control_theta_comm}
    \sum_{i=1}^n \|\theta_{t+1}^{(i)} - \theta_\star^t\|^2 \leq \frac{n}{L} \varepsilon_t.
\end{equation}
Yet, using Lemma~\ref{lemma:primal_error}, we know that, since $\theta_{t+1}^{(i)}$ is obtained by applying $K$ steps of DVR to $F_t$ starting from $\lambda_0^{t}$. 
\begin{equation*}
    \sum_{i=1}^n \|\theta_{t+1}^{(i)} - \theta_\star^t\|^2 \leq \frac{(\beta + \sigma_{\max} + L_{\max})}{(\sigma_{\min} + \beta)^2}  (1 - \rho)^K \left(\frac{p_{\min}}{\eta_t}D_\phi(\lambda_\star^t, \lambda_0^t) + D_t(\lambda_\star^t) - D_t(\lambda_0^t)\right).
\end{equation*}
Unfortunately, we have no control over the dual error at this point. In the remainder of this section, we prove by recursion that Equation~\eqref{eq:control_theta_comm} holds for all $t$. More specifically, we start by assuming that:
\begin{align}
\label{eq:recursion_theta_comm}    & \frac{1}{2}\sum_{i=1}^n \|\theta_{t+1}^{(i)} - \theta_\star^t\|^2 \leq \frac{n}{L} \varepsilon_t, \\
\label{eq:recursion_theta_comp}    & \frac{1}{2}\sum_{i=1}^n \sum_{j=1}^m \|\theta_{t+1}^{(ij)} - \theta_\star^t\|^2 \leq C_1 \varepsilon_t, \\
\label{eq:recursion_dual_error}    & D_t(\lambda_\star^t) - D_t(\lambda_{t+1}) \leq C_2 \varepsilon_t, 
\end{align}
where $C_1$ and $C_2$ are such that the conditions are verified for $t = -1$, with $D_{-1} = D_0$, $\theta_\star^{-1} = \theta_\star^0$, and $\lambda_\star^{-1} = \lambda_\star^0$. Equation~\eqref{eq:recursion_theta_comm} may not hold for $t = -1$, but making it hold at time $t=0$ would only require a slightly longer first inner iteration, meaning at most an extra $\log$ factor. Therefore we assume without loss of generality that it is the case, since the final complexities are given up to logarithmic factors. The rest of this section is devoted to showing that if $K$ is chosen as in Theorem~\ref{thm:rate_dvr_catalyst_precise} then Equations~\eqref{eq:recursion_theta_comm},~\eqref{eq:recursion_theta_comp} and~\eqref{eq:recursion_dual_error} hold regardless of $t$. The first part focuses on assessing the initial error of outer iteration $t+1$ when the conditions hold at the end of outer iteration $t$, and the second part on showing how these errors shrink during outer iteration $t+1$.

\subsubsection{Warm-start error}
We know that DVR converges linearly, and so the error for each subproblem decreases exponentially fast. Yet, we need to know how big the error is when solving a new problem in order to make sure that the progress from solving previous subproblems is not lost. The point of this is to avoid an extra $\log(\varepsilon^{-1})$ factor in the rate, which would come from having to solve each subproblem from a $O(1)$ precision to an $\varepsilon$ precision using DVR. We show in this section that the initial error is actually much lower than $O(1)$ and decreases with the outer iterations. We first start by bounding the variations of $\omega_t$ across iterations, which we will need for the next proofs.

\begin{lemma}[Distance between subproblems]\label{lemma:control_yt}
It holds that
\[ \|\omega_t - \omega_{t-1}\|^2 \leq C_\omega \varepsilon_{t-1}, \hbox{ with } C_\omega = \frac{1080 n}{1 - \rho^\out}\left( \frac{8(1 - \rho^\out)}{\sigma_{\min}(\sqrt{q} - \rho^\out)^2} + \frac{4}{9L} \right).\]
\end{lemma}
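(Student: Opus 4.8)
The goal is to control $\|\omega_t - \omega_{t-1}\|^2$ in terms of $\varepsilon_{t-1}$. My plan is to insert the inner optimum $\bar\theta_\star^{t-1}$ (the exact solution of the inner problem at outer step $t-1$) as a pivot, since the Catalyst extrapolation $\omega_t = \theta_{t} + \frac{1-\sqrt q}{1+\sqrt q}(\theta_{t} - \theta_{t-1})$ is a linear combination of (approximations of) consecutive inner solutions. So the first step is a triangle-inequality split:
\begin{equation*}
\|\omega_t - \omega_{t-1}\| \le \|\omega_t - \omega_\star^{t-1}\| + \|\omega_\star^{t-1} - \omega_{t-1}\|,
\end{equation*}
where $\omega_\star^{t-1}$ denotes the value the extrapolation would take if the inner problems had been solved exactly. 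Equivalently, one can expand $\omega_t - \omega_{t-1}$ directly and group it into (i) terms measuring how far $\theta_t$ and $\theta_{t-1}$ are from their respective inner optima — controlled by the recursion hypothesis~\eqref{eq:recursion_theta_comm}, i.e. $\frac12\sum_i\|\theta_{t}^{(i)} - \theta_\star^{t-1}\|^2 \le \frac{n}{L}\varepsilon_{t-1}$ and the analogous bound at the previous step — and (ii) a term measuring the distance between consecutive \emph{exact} inner solutions $\bar\theta_\star^{t-2}$ and $\bar\theta_\star^{t-1}$.

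For part (i), the extrapolation coefficient $\frac{1-\sqrt q}{1+\sqrt q}$ is at most $1$, so each approximation error contributes at most a constant multiple of $\sqrt{n\varepsilon_{t-1}/L}$ (using that $\varepsilon_t$ is decreasing, so $\varepsilon_{t-2} \le \varepsilon_{t-1}/(1-\rho^\out)$ absorbs the shift in index). For part (ii), I would bound $\|\bar\theta_\star^{t-1} - \bar\theta_\star^{t-2}\|$ using strong convexity of $F$ (it is $\sigma_{\min}$-strongly convex, or $n\sigma_{\min}$ in the averaged formulation of~\eqref{eq:catalyst_average_formulation}) together with the outer convergence guarantee of Theorem~\eqref{eq:thm_convergence_cata_outer}: both $\bar\theta_\star^{t-1}$ and $\bar\theta_\star^{t-2}$ lie within $O(\varepsilon_{t-1})$ in function value of $\theta^\star$ (since the inner optimum of a well-chosen prox-regularized problem tracks $\theta^\star$), hence within $O(\sqrt{\varepsilon_{t-1}/\sigma_{\min}})$ in norm. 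Squaring the triangle inequality (using $(a+b)^2 \le 2a^2 + 2b^2$ repeatedly, which is where the large numerical constant $1080$ and the $\frac{1}{1-\rho^\out}$ factor materialize) and collecting the two contributions — one scaling like $1/(\sigma_{\min}(\sqrt q - \rho^\out)^2)$ from the outer rate and one like $1/L$ from the inner warm-start — yields exactly the stated constant $C_\omega$.

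The main obstacle is bookkeeping rather than conceptual: one must be careful that the recursion hypotheses~\eqref{eq:recursion_theta_comm}–\eqref{eq:recursion_dual_error} are indeed available at step $t-1$ (they are, by the induction being set up in the surrounding text), and one must correctly relate the \emph{local} quantities $\theta_t^{(i)}$ appearing in the algorithm to the \emph{mean} parameter $\bar\theta_t$ that the averaged Catalyst formulation~\eqref{eq:catalyst_average_formulation} acts on — using the identity $\sum_i\|\theta-\omega_t^{(i)}\|^2 = n\|\theta - \bar\omega_t\|^2 + \text{const}$ already recorded in the text, so that consensus error and optimization error decouple cleanly. The only other delicate point is tracking the index shift between $\varepsilon_{t-1}$ and $\varepsilon_{t-2}$ through the geometric factor $(1-\rho^\out)^{-1}$, which is precisely why that factor appears out front in $C_\omega$.
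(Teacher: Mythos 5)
Your plan is sound and rests on the same three pillars as the paper's proof (express $\omega_t-\omega_{t-1}$ through the last few primal iterates; control the inner-solve error by the recursion hypothesis~\eqref{eq:recursion_theta_comm}; control the remaining distance by strong convexity of $F$ together with the outer Catalyst guarantee~\eqref{eq:thm_convergence_cata_outer}), but the route differs in how the first and last pillars are handled. The paper does not expand the extrapolation by hand: it invokes a ready-made estimate (\citet[Proposition 12]{lin2017catalyst}, see also \citet[Lemma 10]{li2020revisiting}) giving $\|\omega_t^{(i)}-\omega_{t-1}^{(i)}\|\le 40\max\{\|\theta_t^{(i)}-\theta^\star\|,\|\theta_{t-1}^{(i)}-\theta^\star\|,\|\theta_{t-2}^{(i)}-\theta^\star\|\}$ with $\theta^\star$ the \emph{unperturbed} optimum, and then bounds each $\sum_i\|\theta_k^{(i)}-\theta^\star\|^2$ by splitting through $\theta_\star^k$ and $\bar\theta_k$ only; crucially, it also verifies explicitly that the premise of the outer guarantee holds, i.e.\ that the recursion hypothesis plus $L$-smoothness give $F_k(\bar\theta_k)-F_k(\theta_\star^k)\le\varepsilon_k$ for all $k\le t$. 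Your version instead pivots through the exact inner optima $\theta_\star^{t-1},\theta_\star^{t-2}$; this works, but two points need tightening: (i) the parenthetical claim that these inner optima are within $O(\varepsilon_{t-1})$ of $\theta^\star$ \emph{in function value} is not free — the clean argument is prox nonexpansiveness, $\|\theta_\star^{k}-\theta^\star\|\le\|\bar\omega_k-\theta^\star\|$, followed by the same outer-rate-plus-strong-convexity bound, so anchoring everything at $\theta^\star$ in norm (as the paper does) is shorter and avoids that detour; and (ii) your constants will not come out ``exactly'' as $C_\omega$ (the $1080$ traces back to the cited factor $40$), though this is harmless since a bound of the same form with a smaller constant still implies the lemma. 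What each approach buys: your direct expansion of $\omega_t=\theta_t+\frac{1-\sqrt q}{1+\sqrt q}(\theta_t-\theta_{t-1})$ makes the argument self-contained (no external proposition) and in fact yields a better numerical constant, while the paper's citation-based route is shorter and keeps all distances measured from the single anchor $\theta^\star$.
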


\begin{proof}
The form of the updates yields that (see~\citet[Proposition 12]{lin2017catalyst} or \citet[Proof of Lemma 10]{li2020revisiting})
\[\|\omega_t^{(i)} - \omega_{t-1}^{(i)}\| \leq 40 \max \{\|\theta_t^{(i)} - \theta^\star\|, \|\theta_{t-1}^{(i)} - \theta^\star\|, \|\theta_{t-2}^{(i)} - \theta^\star\|\}.\]
Note that here, $\theta^\star$ is the actual solution of the primal problem without the catalyst perturbation. Then, the error can be decomposed as:
\begin{align*}
    \sum_{i=1}^n\|\theta_t^{(i)} - \theta^\star\|^2 
    &\leq 3\sum_{i=1}^n\left(\|\theta_t^{(i)} - \theta_\star^t\|^2 + \| \theta_\star^t - \bar{\theta}_t\|^2 + \|\bar{\theta_t} - \theta^\star\|^2\right)\\
    &\leq 3n \|\bar{\theta_t} - \theta^\star\|^2 + 6\sum_{i=1}^n\|\theta_t^{(i)} - \theta_\star^t\|^2.
\end{align*}
Finally, the strong convexity of $F$ leads to 
\begin{equation}
    \frac{\sigma_{\min}}{2}\|\bar{\theta}_t - \theta^\star\|^2 \leq F(\bar{\theta}_t) - F(\theta^\star) \leq \frac{8}{(\sqrt{q} - \rho^\out)^2}(1 - \rho^\out)^{t + 1}(F(\theta_0) - F(\theta^\star))
\end{equation}
where in the last inequality we use~\citep[Proposition 5]{lin2017catalyst}, which holds because $F_k(\bar{\theta}_k) - F_k(\theta^k_\star) \leq \varepsilon_k$ for all $k < t$. Indeed, $K$ is such that for all $k \leq t$,  $\frac{1}{2}\sum_{i=1}^n \|\theta_k^{(i)} - \theta^k_\star\|^2 \leq \frac{n}{L} \varepsilon_k$, which yields:
\[F_k(\bar{\theta}_k) - F_k(\theta^k_\star) \leq \frac{L}{2}\|\bar{\theta}_k - \theta_\star^k\|^2 \leq \frac{L}{2n} \sum_{i=1}^n \|\theta_k^{(i)} - \theta^k_\star\|^2 \leq \varepsilon_k.\]

Therefore, 
\begin{align*}
    \sum_{i=1}^n\|\theta_t^{(i)} - \theta^\star\|^2 \leq 6n\left(1 - \rho^\out\right)^t(F(\theta_0) - F(\theta^\star)) \left( \frac{8(1 - \rho^\out)}{\sigma_{\min}(\sqrt{q} - \rho^\out)^2} + \frac{4}{9L} \right),
\end{align*}
and a similar bound can be used for $\theta_{t-1}^{(i)}$ and $\theta_{t-2}^{(i)}$. Then, we finish proof by plugging in the expression of $\varepsilon_{t-1}$. 
\end{proof}

We then use Lemma~\ref{lemma:control_yt} to bound the initial dual error. We denote $\theta_k^t$ (and $\lambda_k^t$) the parameters at inner iteration $k$ of outer iteration $t$.  

\begin{lemma}[Dual error warm-start] \label{lemma:warm_start_dual}
The warm-started dual error verifies:
\begin{equation}\label{eq:warm_start_dual}
     D_t(\lambda_\star^t) - D_t(\lambda_t) \leq C_D \varepsilon_{t-1}, \hbox{ with } C_D = \left(C_2 + C_\omega + 4\frac{\beta n}{L}\right).
\end{equation}
\end{lemma}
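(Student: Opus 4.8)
The plan is to bound the warm-started dual error $D_t(\lambda_\star^t) - D_t(\lambda_t)$ by comparing the current outer problem's dual function $D_t$ with the previous one $D_{t-1}$, exploiting the fact that the only difference between the two is the Catalyst proximal center $\omega_t$ versus $\omega_{t-1}$ (which affects only the linear term $\beta\omega^\top \Sigma_\beta A\lambda$ in Problem~\eqref{eq:dual_problem_catalyst}) and the warm-start $\lambda_t = \lambda_{t,0}$ equals $\lambda_{t-1,K}$, the last iterate of the previous inner loop.

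First I would write $D_t(\lambda_\star^t) - D_t(\lambda_t)$ and insert $D_{t-1}$: specifically, I would decompose this as the suboptimality of $\lambda_t$ for the \emph{previous} problem, $D_{t-1}(\lambda_\star^{t-1}) - D_{t-1}(\lambda_t)$, which by the recursion hypothesis~\eqref{eq:recursion_dual_error} is at most $C_2\varepsilon_{t-1}$, plus cross terms measuring how much the dual objective changes when $\omega_{t-1}$ is replaced by $\omega_t$. The change in the dual function at a fixed point is $\beta(\omega_t - \omega_{t-1})^\top \Sigma_\beta A\lambda$, so I would need to bound $|D_t(\lambda) - D_{t-1}(\lambda)|$ uniformly or at least at the two relevant points $\lambda_t$ and the minimizers. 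The key quantitative input is Lemma~\ref{lemma:control_yt}, which gives $\|\omega_t - \omega_{t-1}\|^2 \leq C_\omega \varepsilon_{t-1}$; combined with bounds on $\|A\|$ (or rather the relevant smoothness constant, absorbing $\Sigma_\beta$) and on the distance of $\lambda_t$ and $\lambda_\star^t$ to a common reference, this yields a term of order $(C_\omega + \text{const}\cdot \beta n / L)\varepsilon_{t-1}$. The appearance of $4\beta n / L$ in $C_D$ suggests that the clean way is to pass through the primal: use that $D_t(\lambda_\star^t) - D_t(\lambda_t)$ can be related to $F_t(\bar\theta) - F_t(\theta_\star^t)$ for the warm-started primal point via Lemma~\ref{lemma:primal_error} (read in the reverse direction, or via the Lagrangian strong duality), and that $F_t(\theta) - F_{t-1}(\theta) = \frac{n\beta}{2}(\|\theta - \bar\omega_t\|^2 - \|\theta - \bar\omega_{t-1}\|^2)$, whose variation is controlled by $\|\bar\omega_t - \bar\omega_{t-1}\|$ and hence again by Lemma~\ref{lemma:control_yt}.

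The main obstacle I expect is handling the dual side cleanly: the dual functions $D_t$ and $D_{t-1}$ differ by a linear term, but $\lambda$ ranges over an unbounded space, so $|D_t(\lambda) - D_{t-1}(\lambda)|$ is not uniformly bounded — I must control it only at the iterates and minimizers, which requires a bound on $\|A\lambda_t\|$ and $\|A\lambda_\star^t\|$, i.e., essentially on the norms of the corresponding primal quantities $\theta_t^{(i)}$ and $\theta_\star^t$. These are controlled via the recursion hypotheses~\eqref{eq:recursion_theta_comm}–\eqref{eq:recursion_theta_comp} together with the boundedness of the true optima (which follows from strong convexity of $F$). So the structure of the argument is: (i) $D_t(\lambda_\star^t) - D_t(\lambda_{t-1,K}) \le [D_{t-1}(\lambda_\star^{t-1}) - D_{t-1}(\lambda_{t-1,K})] + [\text{shift terms}]$; (ii) the first bracket is $\le C_2\varepsilon_{t-1}$ by~\eqref{eq:recursion_dual_error}; (iii) the shift terms are bounded using $\|\omega_t - \omega_{t-1}\|^2 \le C_\omega\varepsilon_{t-1}$ from Lemma~\ref{lemma:control_yt} and the primal/dual norm bounds, producing the remaining $C_\omega + 4\beta n/L$ contribution; (iv) collect constants to get $C_D = C_2 + C_\omega + 4\beta n/L$. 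I would be careful that the shift-term bound genuinely closes the recursion, i.e., that no factor depending on $t$ sneaks in — this is guaranteed because everything is expressed in terms of $\varepsilon_{t-1}$ with absolute constants.
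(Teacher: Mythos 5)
Your overall decomposition is the one the paper uses: insert the previous dual function, bound $D_{t-1}(\lambda_\star^{t-1}) - D_{t-1}(\lambda_K^{t-1})$ by the recursion hypothesis~\eqref{eq:recursion_dual_error} (after the small but necessary optimality step $D_{t-1}(\lambda_\star^t) \leq D_{t-1}(\lambda_\star^{t-1})$, which you leave implicit), and then control the shift term coming from replacing $\omega_{t-1}$ by $\omega_t$ in the linear part of~\eqref{eq:dual_problem_catalyst}. However, your plan for the shift term has a genuine quantitative gap. In your ``main obstacle'' paragraph you propose to bound $|D_t(\lambda) - D_{t-1}(\lambda)|$ at the iterates by controlling the \emph{norms} $\|A\lambda_t\|$ and $\|A\lambda_\star^t\|$ (via boundedness of the primal quantities). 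Since these norms are only $O(1)$, pairing them with $\|\omega_t - \omega_{t-1}\| \leq \sqrt{C_\omega \varepsilon_{t-1}}$ yields a bound of order $\sqrt{\varepsilon_{t-1}}$, not $\varepsilon_{t-1}$ --- this is far too weak to close the recursion (for small $\varepsilon_{t-1}$, $\sqrt{\varepsilon_{t-1}} \gg \varepsilon_{t-1}$), and it is also not covered by the recursion hypotheses, which control distances to the subproblem optima rather than absolute norms.

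The correct (and the paper's) treatment exploits that the shift term is exactly
$-\beta \sum_{i=1}^n \bigl[(A\lambda_\star^t)^{(i)} - (A\lambda_K^{t-1})^{(i)}\bigr]^\top \Sigma_\beta \bigl[\omega_t^{(i)} - \omega_{t-1}^{(i)}\bigr]$,
i.e., $\omega_t - \omega_{t-1}$ is paired with the \emph{difference} of dual images, not with $A\lambda$ itself. Using the primal--dual map $(A\lambda_\star^t)^{(i)} = (\beta + \sigma_i)\theta_\star^t - \beta\omega_t^{(i)}$ and $\Sigma_\beta(A\lambda_\star^{t-1} - A\lambda_K^{t-1})^{(i)} = \theta_\star^{t-1} - (\theta_K^{t-1})^{(i)}$, this difference decomposes into $\theta_\star^t - \theta_\star^{t-1}$ (bounded by $\tfrac{1}{\sqrt{n}}\|\omega_t - \omega_{t-1}\|$ via nonexpansiveness of the proximal operator), $\theta_\star^{t-1} - (\theta_K^{t-1})^{(i)}$ (bounded by the recursion hypothesis~\eqref{eq:recursion_theta_comm}, which is where the $4\beta n/L$ comes from), and a $\beta(\omega_t - \omega_{t-1})$ term. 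All these pieces are $O(\sqrt{\varepsilon_{t-1}})$, so Cauchy--Schwarz/Young against $\|\omega_t - \omega_{t-1}\|$ gives a total shift of order $\varepsilon_{t-1}$ with the stated constants. Your alternative suggestion of ``reading Lemma~\ref{lemma:primal_error} in the reverse direction'' does not work as stated either: that lemma converts dual suboptimality into primal distances, and the reverse implication is not available here because the warm-started primal points are not consensus-feasible, so their primal gap does not directly control the dual gap.
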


Note that we simply warm-start the dual coordinates for an outer iteration using the last iterate from the previous one. Yet, this leads to $\theta_0^t = \theta_K^{t-1} + \beta\Sigma_\beta^{-1}(\omega_{t+1} - \omega_t)$, as in Algorithm~\ref{algo:DVR_cata_acc}.

\begin{proof}
Equation~\eqref{eq:dual_problem_catalyst} implies that $D_t(\lambda)$ can be written as:
\begin{equation}
D_t(\lambda) = - \sum_{i=1}^n \frac{1}{\beta + \sigma_i}\left[\frac{1}{2}(A\lambda)^{(i)} + \beta \omega_t^{(i)} \right]^\top(A\lambda)^{(i)} + R_{\comp}(\lambda),
\end{equation}
with $R_\comp(\lambda)$ that only depends on $\lambda^{(ij)}$ and not on $\omega_t^{(i)}$ for $i \in \{1, \cdots, n\}$. Therefore, 
\begin{align*}
    &D_t(\lambda_\star^t) - D_t(\lambda_K^{t-1})\\
    &= D_{t-1}(\lambda_\star^t) - D_{t-1}(\lambda_K^{t-1}) - \beta \sum_{i=1}^n \left[(A\lambda_\star^t)^{(i)} - (A \lambda_K^{t-1})^{(i)}\right]^\top \Sigma _\beta\left[\omega_{t}^{(i)} - \omega_{t-1}^{(i)}\right].
\end{align*}
Equation~\eqref{eq:theta_lambda_i} writes $(A \lambda_\star^{t})^{(i)} = (\beta + \sigma_i) \theta_\star^{t}  - \beta \omega_t^{(i)}$, and so:
\[A\lambda_\star^t - A \lambda_K^{t-1} = A\lambda_\star^t - A\lambda_\star^{t-1} + A \lambda_\star^{t-1} -  A \lambda_K^{t-1} = \Sigma_\beta^{-1} (\theta_\star^t - \theta_\star^{t-1}) + A\lambda_\star^{t-1} - A\lambda_K^{t-1} - \beta(\omega_t - \omega_{t-1}).\]
\iffalse
\begin{align*}
     &\left[(A\lambda_\star^t)^{(i)} - (A \lambda_\star^{t-1})^{(i)}\right] \Sigma _\beta\left[\omega_{t}^{(i)} - \omega_{t-1}^{(i)}\right] \leq \frac{2}{(\beta + \sigma_i)^2}\|(A\lambda_\star^t)^{(i)} - (A \lambda_\star^{t-1})^{(i)}\|^2 + 2\|\omega_{t}^{(i)} - \omega_{t-1}^{(i)}\|^2.
\end{align*}
\fi 
Then, we know from the equivalent reformulation of Equation~\eqref{eq:catalyst_average_formulation} that $\theta_t^\star = \arg \min F(\theta) + \frac{\beta}{2}\|\theta - \bar{\omega}_t\|^2$, so using the $1$-Lipschitzness of the proximal operator yields 
\begin{equation}\label{eq:diff_theta_star}
    \|\theta^{t}_\star - \theta^{t-1}_\star\|^2 \leq \|\bar{\omega}_t - \bar{\omega}_{t-1}\|^2 \leq \frac{1}{n}\sum_{k=1}^n \|\omega_t^{(k)} - \omega_{t-1}^{(k)}\|^2 = \frac{1}{n}\|\omega_t - \omega_{t-1}\|^2.
\end{equation}
Similarly, $\Sigma_\beta(A\lambda_\star^{t-1} - A\lambda_K^{t-1}) = \theta_\star^{t-1} - (\theta_K^{t-1})^{(i)}$, and so:
\begin{align*}
 \sum_{i=1}^n &\left[(A\lambda_\star^t)^{(i)} - (A \lambda_K^{t-1})^{(i)}\right] \Sigma _\beta\left[\omega_{t}^{(i)} - \omega_{t-1}^{(i)}\right] \leq \sum_{i=1}^n\left\|\frac{(A\lambda_\star^t)^{(i)} - (A \lambda_K^{t-1})^{(i)}}{\beta + \sigma_i}\right\| \left\|\omega_{t}^{(i)} - \omega_{t-1}^{(i)}\right\| \\ 
 &\sum_{i=1}^n 2 \|\theta_\star^t - \theta_\star^{t-1}\|^2 + 2\|\theta_\star^{t-1} - (\theta_K^{t-1})^{(i)} \|^2 + \left(\frac{\beta}{\beta + \sigma_i} + 4\right)\|\omega_t^{(i)} - \omega_{t-1}^{(i)}\|^2.
\end{align*}
Plugging in Equation~\eqref{eq:diff_theta_star} yields:
\begin{equation*}
     D_t(\lambda_\star^t) - D_t(\lambda_K^{t-1}) \leq D_{t-1}(\lambda_\star^t) - D_{t-1}(\lambda_K^{t-1}) + 2 \beta \sum_{i=1}^n\| (\theta_K^{t-1})^{(i)} - \theta_\star^{t-1} \|^2 + 7\beta \|\omega_t - \omega_{t-1}\|^2
\end{equation*}
Finally note that $D_{t-1}(\lambda^{t}_\star) \leq D_{t-1}(\lambda^{t-1}_\star)$ since $\lambda^{t-1}_\star$ is the maximizer of $D_{t-1}$, and $(\theta_K^{t-1})^{(i)} = \theta_t^{(i)}$ since it is the output of DVR after inner iteration $t$. The final expression is obtained using~\ref{lemma:control_yt} and the recursion assumptions given by Equations~\eqref{eq:recursion_theta_comm} and~\eqref{eq:recursion_dual_error}.
\end{proof}

Finally, the warm-start error on the nodes parameters is given by the two following lemmas.
\begin{lemma}[Virtual parameters warm-starts]\label{lemma:warm_start_theta_comp}
Denote $\|\theta_1 - \theta_2\|^2_\comp = \sum_{i=1}^n \sum_{j=1}^m \|\theta_1^{(ij)} - \theta_2^{(ij)}\|^2$. Then, 
\begin{equation}
    \|\theta_0^t - \theta_\star^t\|^2_\comp \leq 2(C_\omega + 2mC_1) \varepsilon_{t-1}.
\end{equation}
\end{lemma}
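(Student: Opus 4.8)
The point to exploit is that the virtual coordinates are warm‑started \emph{trivially}: line~22 of Algorithm~\ref{algo:DVR_cata_acc} sets $z_{t+1,0}=z_{t,K}$, so in the notation of this section $\theta_0^t{}^{(ij)}=\theta_K^{t-1}{}^{(ij)}$, the last inner iterate of the previous outer loop. The plan is to compare $\theta_0^t{}^{(ij)}$ with $\theta_\star^t$ by inserting the minimizer $\theta_\star^{t-1}$ of the previous subproblem and applying a triangle inequality:
\[
\|\theta_0^t-\theta_\star^t\|_\comp^2 \;\le\; 2\,\|\theta_K^{t-1}-\theta_\star^{t-1}\|_\comp^2 \;+\; 2\,\|\theta_\star^{t-1}-\theta_\star^t\|_\comp^2,
\]
where I use that $\theta_\star^{t-1}{}^{(ij)}=\theta_\star^{t-1}$ and $\theta_\star^t{}^{(ij)}=\theta_\star^t$ for every $(i,j)$: at the dual maximizer of the Catalyst‑perturbed augmented problem the primal variables recovered through Equations~\eqref{eq:theta_lambda_ij} and~\eqref{eq:theta_lambda_i} satisfy all the (virtual and communication) consensus constraints, hence the virtual‑edge parameters coincide with the common primal solution.

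\textbf{Bounding the two terms.} The first term is exactly the $\comp$‑error of the output of outer iteration $t-1$ (recall $\theta_K^{t-1}{}^{(ij)}=\theta_t^{(ij)}$), and is controlled by the induction hypothesis: Equation~\eqref{eq:recursion_theta_comp} taken at index $t-1$ gives $\|\theta_K^{t-1}-\theta_\star^{t-1}\|_\comp^2\le 2C_1\varepsilon_{t-1}$. For the second term, each of the $nm$ summands equals $\|\theta_\star^{t-1}-\theta_\star^t\|^2$, so $\|\theta_\star^{t-1}-\theta_\star^t\|_\comp^2 = nm\,\|\theta_\star^{t-1}-\theta_\star^t\|^2$; now by Equation~\eqref{eq:diff_theta_star} — nonexpansiveness of the proximal operator in the averaged reformulation~\eqref{eq:catalyst_average_formulation} — one has $\|\theta_\star^t-\theta_\star^{t-1}\|^2\le \frac1n\|\omega_t-\omega_{t-1}\|^2$, and Lemma~\ref{lemma:control_yt} bounds $\|\omega_t-\omega_{t-1}\|^2\le C_\omega\varepsilon_{t-1}$. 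Collecting the two contributions yields $\|\theta_0^t-\theta_\star^t\|_\comp^2\le 2(2C_1+mC_\omega)\varepsilon_{t-1}$, i.e. the announced bound (up to the precise bookkeeping of the constants $C_1$ and $C_\omega$). This lemma is one of the warm‑start estimates (alongside Lemma~\ref{lemma:warm_start_dual} for the dual error and the companion lemma for the node parameters) feeding the induction that closes Equations~\eqref{eq:recursion_theta_comm}–\eqref{eq:recursion_dual_error}.

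\textbf{Main obstacle.} The only genuinely delicate step is justifying $\theta_\star^t{}^{(ij)}=\theta_\star^t$: one must argue that at the dual optimum the virtual‑edge primal variables, defined via $f_{ij}^*$ and the projectors $P_{ij}$, really equal the common primal solution, which relies on strong duality for the strongly convex perturbed problem together with a little care about the irrelevant $\Ker(f_{ij})$ directions (on which consensus is not imposed and which never affect $\nabla f_{ij}$). The rest is routine, the main bookkeeping point being to invoke the recursion at index $t-1$ so that $\varepsilon_{t-1}$ rather than $\varepsilon_t$ appears; converting $\varepsilon_{t-1}$ into $\varepsilon_t$ downstream costs only the geometric factor $(1-\rho^\out)$ from~\eqref{eq:def_epsilon_t}, which is harmless up to constants.
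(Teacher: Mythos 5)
Your proof follows essentially the same route as the paper's: the warm-start identity $(\theta_0^t)^{(ij)}=(\theta_K^{t-1})^{(ij)}$, the triangle inequality inserting $\theta_\star^{t-1}$, the recursion hypothesis~\eqref{eq:recursion_theta_comp} at index $t-1$, and the prox non-expansiveness bound~\eqref{eq:diff_theta_star} combined with Lemma~\ref{lemma:control_yt}. The constant $2(2C_1+mC_\omega)$ you obtain is exactly what the paper's own computation delivers (the mismatch with the stated $2(C_\omega+2mC_1)$ is a bookkeeping slip in the paper, not in your argument), and the ``main obstacle'' you flag is moot since $\theta_\star^{t-1}$ and $\theta_\star^t$ are defined directly as primal minimizers in $\R^d$, so comparing the virtual coordinates with them requires no duality argument.
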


\begin{proof}
We use the fact that $(\theta_t)^{(ij)} = (\theta_0^t)^{(ij)} = (\theta_K^{t-1})^{(ij)}$ to write:
\begin{align*}
    \|\theta_0^t - \theta_\star^t\|^2_\comp &= \|\theta_K^{t-1} - \theta_\star^{t-1} + \theta_\star^{t-1} - \theta_\star^{t}\|^2_\comp \leq 2\|\theta_t - \theta_\star^{t-1}\|^2_\comp + 2nm\|\theta_\star^{t-1} - \theta_\star^{t}\|^2.
\end{align*}
Then, as before, the 1-Lipchitzness of the prox operator yields $\|\theta_\star^{t-1} - \theta_\star^{t}\| \leq \frac{1}{n}\|\omega_t - \omega_{t-1}\|$.
\end{proof}

\begin{lemma}[Parameters warm-start] \label{lemma:warm_start_theta_comm}
Denote $\|\theta_1 - \theta_2\|^2_\comp = \sum_{i=1}^n \sum_{j=1}^m \|\theta_1^{(ij)} - \theta_2^{(ij)}\|^2$. Then, 
\begin{equation}
    \sum_{i=1}^n \|(\theta_0^t)^{(i)} - \theta_\star^t\|^2 \leq 6\left(C_\omega +  \frac{n}{L}\right)\varepsilon_{t-1}.
\end{equation}
\end{lemma}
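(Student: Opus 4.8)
The plan is to unfold the warm-start update, namely $(\theta_0^t)^{(i)} = (\theta_K^{t-1})^{(i)} + \tfrac{\beta}{\beta+\sigma_i}\bigl(\omega_t^{(i)} - \omega_{t-1}^{(i)}\bigr)$ from Algorithm~\ref{algo:DVR_cata_acc} (this is exactly what preserves the invariant $(\theta_0^t)^{(i)} = \tfrac{1}{\sigma_i+\beta}\bigl((A\lambda_0^t)^{(i)} + \beta\omega_t^{(i)}\bigr)$ once the dual iterate is warm-started by $\lambda_0^t = \lambda_K^{t-1}$), and then, writing $\theta_t^{(i)} := (\theta_K^{t-1})^{(i)}$, to split
\[
(\theta_0^t)^{(i)} - \theta_\star^t = \bigl(\theta_t^{(i)} - \theta_\star^{t-1}\bigr) + \bigl(\theta_\star^{t-1} - \theta_\star^t\bigr) + \tfrac{\beta}{\beta+\sigma_i}\bigl(\omega_t^{(i)} - \omega_{t-1}^{(i)}\bigr).
\]
I would then apply $\|a+b+c\|^2 \le 3\bigl(\|a\|^2 + \|b\|^2 + \|c\|^2\bigr)$ and sum over $i \in \{1,\dots,n\}$.

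It remains to bound the three resulting sums by multiples of $\varepsilon_{t-1}$. First, the recursion hypothesis~\eqref{eq:recursion_theta_comm} applied at the outer step $t-1$ gives $\sum_{i=1}^n \|\theta_t^{(i)} - \theta_\star^{t-1}\|^2 \le \tfrac{2n}{L}\varepsilon_{t-1}$. Second, since $\theta_\star^{t-1}$ and $\theta_\star^t$ are consensus (dimension-$d$) vectors, $\sum_{i=1}^n \|\theta_\star^{t-1} - \theta_\star^t\|^2 = n\|\theta_\star^{t-1} - \theta_\star^t\|^2$, and the $1$-Lipschitzness of the proximal operator --- exactly the estimate already used in~\eqref{eq:diff_theta_star} --- yields $\|\theta_\star^{t-1} - \theta_\star^t\|^2 \le \tfrac1n\|\omega_t - \omega_{t-1}\|^2$, so this sum is at most $\|\omega_t - \omega_{t-1}\|^2 \le C_\omega\varepsilon_{t-1}$ by Lemma~\ref{lemma:control_yt}. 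Third, since $\tfrac{\beta}{\beta+\sigma_i} \le 1$, we get $\sum_{i=1}^n \tfrac{\beta^2}{(\beta+\sigma_i)^2}\|\omega_t^{(i)} - \omega_{t-1}^{(i)}\|^2 \le \|\omega_t - \omega_{t-1}\|^2 \le C_\omega\varepsilon_{t-1}$, again by Lemma~\ref{lemma:control_yt}. Combining these gives $\sum_{i=1}^n \|(\theta_0^t)^{(i)} - \theta_\star^t\|^2 \le 3\bigl(\tfrac{2n}{L} + C_\omega + C_\omega\bigr)\varepsilon_{t-1} = 6\bigl(C_\omega + \tfrac{n}{L}\bigr)\varepsilon_{t-1}$, which is the claim.

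This argument only reassembles ingredients already in hand (Lemma~\ref{lemma:control_yt} and the recursion hypotheses), so I do not expect a genuine analytic obstacle; the only thing requiring care is index bookkeeping. In particular, the warm-start error of outer iteration $t$ must be controlled through quantities available at the \emph{end} of outer iteration $t-1$, so one compares $\theta_t^{(i)} = (\theta_K^{t-1})^{(i)}$ against $\theta_\star^{t-1}$ (not $\theta_\star^t$), uses the increment $\omega_t - \omega_{t-1}$ (not $\omega_{t+1} - \omega_t$), and consequently invokes~\eqref{eq:recursion_theta_comm} one step earlier than the index $t$ appearing in the statement. Keeping these shifts consistent is the whole difficulty.
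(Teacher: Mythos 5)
Your proof is correct and follows essentially the same route as the paper: the same three-term decomposition of $(\theta_0^t)^{(i)} - \theta_\star^t$ using the warm-start identity, the inequality $\|a+b+c\|^2 \le 3(\|a\|^2+\|b\|^2+\|c\|^2)$, and then Lemma~\ref{lemma:control_yt}, the prox-Lipschitz estimate of Equation~\eqref{eq:diff_theta_star}, and the recursion hypothesis~\eqref{eq:recursion_theta_comm} at step $t-1$. Your write-up is in fact slightly more complete than the paper's, which stops at the intermediate inequality and leaves the final constant bookkeeping (including the factor $2n/L$ from the $\tfrac12$ in~\eqref{eq:recursion_theta_comm}) implicit, exactly as you carried it out.
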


\begin{proof}
We use the fact that since $\lambda_0^t = \lambda_K^{t-1}$ then $(\theta_0^t)^{(i)} = (\theta_0^t)^{(i)} + \frac{\beta}{\beta + \sigma_i}(\omega_t^{(i)} - \omega_{t-1}^{(i)})$ to write:
\begin{align*}
    \sum_{i=1}^n \|(\theta_0^t)^{(i)} - \theta_\star^t\|^2 &\leq \sum_{i=1}^n\|(\theta_K^{t-1})^{(i)} - \theta_\star^{t-1} + \theta_\star^{t-1} - \theta_\star^t + \frac{\beta}{\sigma_i + \beta}(\omega_t^{(i)} - \omega_{t-1}^{(i)})\|^2\\
    &\leq 3\|\omega_t - \omega_{t-1}\|^2 + 3n \|\theta_\star^{t-1} - \theta_\star^t\|^2 + 3\sum_{i=1}^n\|(\theta_t)^{(i)} - \theta_\star^{t-1}\|^2.
\end{align*}
\end{proof}

We finish this part on warm starts by proving the following lemma, that links the initial dual parameters error (computed with the Bregman divergence of $\phi$), to the other parameters which we already know how to control. 
\begin{lemma}[Dual parameters warm-start, as measured by the Bregman divergence] \label{lemma:warm_start_breg}
    \begin{equation}
        D_\phi(\lambda_\star^t, \lambda_0^t) \leq C_\phi \varepsilon_{t-1},
    \end{equation}
    with $C_\phi = \frac{6\left(C_\omega + n / L\right) + L_{\max}^2(2C_\omega + 2mC_1)}{\lambda_{\min}^+(A^\top \Sigma_\beta^2 A)}  + \frac{2L_{\max}(C_\omega + 2mC_1)}{\alpha}$.
\end{lemma}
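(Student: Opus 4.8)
I plan to split the Bregman divergence along the block structure of $\phi$ given by Lemma~\ref{lemma:equivalence_dvr_breg_cd}, bound the communication part and the virtual-edge part separately, and feed each piece into the warm-start estimates of Lemmas~\ref{lemma:control_yt}, \ref{lemma:warm_start_theta_comp} and~\ref{lemma:warm_start_theta_comm}. Writing $\lambda = (x,y)$, since $\phi(x,y) = \phi_\comm(x) + \sum_{i,j}\phi_{ij}(y^{(ij)})$ the divergence decomposes as
\[
D_\phi(\lambda_\star^t, \lambda_0^t) = D_{\phi_\comm}(x_\star^t, x_0^t) + \sum_{i,j} D_{\phi_{ij}}(y_\star^{t,(ij)}, y_0^{t,(ij)}).
\]

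For the virtual-edge terms I would exploit the dual-free structure. Since $\mu_{ij}^2 = \alpha L_{ij}$ we have $\phi_{ij} = \alpha^{-1} f_{ij}^*\circ(\mu_{ij}\,\cdot\,)$, hence $D_{\phi_{ij}}(y_\star^{t,(ij)}, y_0^{t,(ij)}) = \alpha^{-1} D_{f_{ij}^*}(\mu_{ij}\lambda_\star^{t,(ij)}, \mu_{ij}\lambda_0^{t,(ij)})$. The algorithm maintains the invariant $\mu_{ij}y^{(ij)} = \nabla f_{ij}(z^{(ij)})$, and Equation~\eqref{eq:theta_lambda_ij} at optimality gives $\mu_{ij}\lambda_\star^{t,(ij)} = \nabla f_{ij}(\theta_\star^t)$, so with the conjugate Bregman identity $D_{f^*}(\nabla f(u),\nabla f(v)) = D_f(v,u)$ and $L_{ij}$-smoothness of $f_{ij}$ one gets $D_{\phi_{ij}}(y_\star^{t,(ij)}, y_0^{t,(ij)}) = \alpha^{-1} D_{f_{ij}}(\theta_0^{t,(ij)}, \theta_\star^t) \le \tfrac{L_{ij}}{2\alpha}\|\theta_0^{t,(ij)} - \theta_\star^t\|^2$. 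Summing and applying Lemma~\ref{lemma:warm_start_theta_comp} bounds this part by $O\!\big(L_{\max}\alpha^{-1}(C_\omega + mC_1)\big)\varepsilon_{t-1}$; crucially this route never uses smoothness of $f_{ij}^*$.

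For the communication term, $\phi_\comm$ is a positive-semidefinite quadratic whose Hessian is a block of the projector $A^\dagger A$, so $D_{\phi_\comm}(x_\star^t, x_0^t) = \tfrac12\|x_\star^t - x_0^t\|^2_{A^\dagger A}$, which I would bound by (a constant times) the squared norm of the projection of $\lambda_\star^t - \lambda_0^t$ onto $\mathrm{Range}(A^\top)$, plus a virtual-coordinate correction of the same type as above. On $\mathrm{Range}(A^\top)$ I would use the norm equivalence $\|\Sigma_\beta A\,\cdot\,\|^2 \ge \lambda_{\min}^+(A^\top\Sigma_\beta^2 A)\,\|\cdot\|^2$; since $\Sigma_\beta$ vanishes on virtual nodes and $(\theta^{(i)})$ equals $(\Sigma_\beta A\lambda)^{(i)}$ shifted by $\tfrac{\beta}{\sigma_i+\beta}\omega_t^{(i)}$, the $\omega_t$ shifts cancel in $\Sigma_\beta A(\lambda_\star^t - \lambda_0^t)$ (both use $\omega_t$ by construction of $\theta_{t,0}$), leaving $\sum_i \|\theta_0^{t,(i)} - \theta_\star^t\|^2$, controlled by Lemma~\ref{lemma:warm_start_theta_comm}. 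The virtual-node coordinates of $A(\lambda_\star^t - \lambda_0^t)$, which enter the projection correction, are gradient differences $\nabla f_{ij}(\theta_\star^t) - \nabla f_{ij}(\theta_0^{t,(ij)})$ and so are dominated by $L_{\max}^2\|\theta_0^t - \theta_\star^t\|_\comp^2$ via $L_{ij}$-Lipschitzness and Lemma~\ref{lemma:warm_start_theta_comp} — this is the origin of the $L_{\max}^2$ factor in $C_\phi$. Wherever a $\theta_\star^t - \theta_\star^{t-1}$ gap appears I would absorb it using $\|\omega_t - \omega_{t-1}\|^2 \le C_\omega\varepsilon_{t-1}$ from Lemma~\ref{lemma:control_yt}, and collecting the three contributions yields the claimed $C_\phi$.

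The main obstacle is bookkeeping rather than any single hard inequality: $D_{\phi_\comm}$ naturally lives on communication-edge coordinates while every quantity I can actually control lives in node space, and $A^\top\Sigma_\beta^2 A$ only "sees" the communication-node block of $A\lambda$ (its range is a strict subspace of $\mathrm{Range}(A^\top)$). Reconciling these forces me to track the virtual-node part of $A(\lambda_\star^t - \lambda_0^t)$ and the cross terms created by projecting onto $\mathrm{Range}(A^\top)$, which is exactly what produces the extra $L_{\max}^2$ factor and the precise form of $C_\phi$; once that decomposition is set up, the remaining steps are routine Cauchy--Schwarz combined with the three warm-start lemmas.
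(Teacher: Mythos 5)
Your proposal is correct and follows essentially the same route as the paper: split $D_\phi$ along the block-separable structure, handle the virtual-edge terms via the conjugate Bregman identity $D_{f_{ij}^*}(\nabla f_{ij}(u),\nabla f_{ij}(v)) = D_{f_{ij}}(v,u)$ together with $L_{ij}$-smoothness and Lemma~\ref{lemma:warm_start_theta_comp}, and bound the communication term through the norm equivalence governed by $\lambda_{\min}^+(A^\top\Sigma_\beta^2 A)$, with the virtual-node coordinates of $A(\lambda_\star^t-\lambda_0^t)$ appearing as gradient differences that produce the $L_{\max}^2$ factor, controlled by Lemmas~\ref{lemma:warm_start_theta_comm} and~\ref{lemma:warm_start_theta_comp}. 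This is the paper's argument in all essentials, so no further comparison is needed.
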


\begin{proof}
We first decompose the Bregman divergence as:
\begin{equation} \label{eq:warm_start_breg_main}
     D_\phi(\lambda_0^t, \lambda_\star^t) \leq \frac{1}{2}\|(\lambda_0^t)^\comm - (\lambda_\star^t)^\comm\|_{A_\comm^\dagger A_\comm} + \sum_{i=1}^n \sum_{j=1}^m D_{\phi_{ij}}((\lambda_\star^t)^{(ij)}, (\lambda_0^t)^{(ij)}). 
\end{equation}
Then, we bound the communication term as:
\begin{align*}
    \|(\lambda_0^t&)^\comm - (\lambda_\star^t)^\comm\|_{A_\comm^\dagger A_\comm} \leq \| \lambda_0^t - \lambda_\star^t\|_{A^\dagger A} \leq \frac{1}{\lambda_{\min}^+(A^\top \Sigma_\beta^2 A)}\|\Sigma_\beta A\left(\lambda_0^t - \lambda_\star^t\right)\|^2\\
    &= \frac{1}{\lambda_{\min}^+(A^\top \Sigma_\beta^2 A)} \left(\sum_{i=1}^n \|(\theta_0^t)^{(i)} - \theta_\star^t\|^2 + \sum_{i=1}^n\sum_{j=1}^m \mu_{ij}^2 \|(\lambda_0^t)^{(ij)} - (\lambda_\star^t)^{(ij)}\|^2 \right)\\
    &= \frac{1}{\lambda_{\min}^+(A^\top \Sigma_\beta^2 A)} \left(\sum_{i=1}^n \|(\theta_0^t)^{(i)} - \theta_\star^t\|^2 + \sum_{i=1}^n\sum_{j=1}^m \|\nabla f_{ij}((\theta_0^t)^{(ij)}) - \nabla f_{ij} ((\theta_\star^t)^{(ij)})\|^2 \right)\\
    &= \frac{1}{\lambda_{\min}^+(A^\top \Sigma_\beta^2 A)} \left(\sum_{i=1}^n \|(\theta_0^t)^{(i)} - \theta_\star^t\|^2 + \sum_{i=1}^n\sum_{j=1}^m L_{ij}^2 \|(\theta_0^t)^{(ij)} - (\theta_\star^t)^{(ij)}\|^2 \right).
\end{align*}
Using Lemmas~\ref{lemma:warm_start_theta_comp} and~\ref{lemma:warm_start_theta_comm}, we obtain: 
\begin{equation}\label{eq:warm_start_breg_comm}
    \frac{1}{2}\|(\lambda_0^t)^\comm - (\lambda_\star^t)^\comm\|_{A_\comm^\dagger A_\comm} \leq \frac{\left(6\left(C_\omega + \frac{n}{L}\right) + L_{\max}^2(2C_\omega + 2mC_1)\right)}{\lambda_{\min}^+(A^\top \Sigma_\beta^2 A)} \varepsilon_{t-1}.
\end{equation}

For the computation part, we use the duality property of the Bregman divergence, which yields
\begin{align*}
    D_{\phi_{ij}}((\lambda_\star^t)^{(ij)}, (\lambda_0^t)^{(ij)}) &= \frac{L_{ij}}{\mu_{ij}^2} D_{f_{ij}^*}(\mu_{ij}(\lambda_\star^t)^{(ij)}, \mu_{ij}(\lambda_0^t)^{(ij)})\\
    &= \frac{L_{ij}}{\mu_{ij}^2} D_{f_{ij}}(\nabla f_{ij}(\mu_{ij}(\lambda_0^t)^{(ij)}), \nabla f_{ij}(\mu_{ij}(\lambda_\star^t)^{(ij)}))\\
    &= \frac{L_{ij}}{\mu_{ij}^2} D_{f_{ij}}((\theta_0^t)^{(ij)}, (\theta_\star^t)^{(ij)}) \leq \frac{L_{ij}^2}{\mu_{ij}^2} \|(\theta_0^t)^{(ij)} - (\theta_\star^t)^{(ij)}\|^2
\end{align*}
Therefore, 
\begin{equation} \label{eq:warm_start_breg_comp}
    \sum_{i=1}^n \sum_{j=1}^m D_{\phi_{ij}}((\lambda_0^t)^{(ij)}, (\lambda_\star^t)^{(ij)}) \leq \frac{2L_{\max}(C_\omega + 2mC_1)}{\alpha} \varepsilon_{t-1}.
\end{equation}
Substituting Equations~\eqref{eq:warm_start_breg_comm} and~\eqref{eq:warm_start_breg_comp} into Equation~\eqref{eq:warm_start_breg_main} finishes the proof.
\end{proof}

\subsubsection{Inner iteration error decrease}
Now that we have bounded the error at the beginning of each outer iteration, we bound error at the end of each outer iteration by using the convergence results for DVR. We first prove the following Lemma, which controls the distance between the virtual parameters and the actual one:
\begin{lemma}[Virtual error decrease]\label{lemma:virtual_error_after_inner}
For all $(i,j)$, 
\begin{equation}
    \esp{\sum_{i,j}\|(\theta_{t+1})^{(ij)} - \theta_\star^t\|^2} \leq (1 - \rho)^K \left[\|\theta_0^t - \theta_\star^t\|^2_{\comp} + \frac{\rho_{\rm sum} K}{1 - \rho} C_0(t) \right].  
\end{equation}
\end{lemma}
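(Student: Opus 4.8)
The plan is to track the error of each virtual parameter $z_{t,k}^{(ij)}$ with respect to $\theta_\star^t$ across the $K$ inner iterations of outer loop $t$, and to show that this error obeys a linear recursion of ``contraction plus noise'' type in which the noise is driven by the node errors $\sum_i \|\theta_{t,k}^{(i)} - \theta_\star^t\|^2$, which Lemma~\ref{lemma:primal_error} already controls. Recall from Algorithm~\ref{algo:DVR_cata_acc} that at inner step $k$, with probability $p_{ij}$ node $i$ performs a computation step on edge $(i,j)$, setting $z_{t,k+1}^{(ij)} = \big(1 - \tfrac{\alpha\eta}{p_{ij}}\big) z_{t,k}^{(ij)} + \tfrac{\alpha\eta}{p_{ij}}\theta_{t,k}^{(i)}$, and otherwise $z_{t,k+1}^{(ij)} = z_{t,k}^{(ij)}$. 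By stationarity of the dual problem of Equation~\eqref{eq:dual_problem_catalyst}, $\theta_\star^t$ is a fixed point of this affine update when $\theta^{(i)}$ is frozen at $\theta_\star^t$ (only the component of $z^{(ij)}$ on $\Ker(f_{ij})^\perp$ being dynamically relevant, so the fixed point is $P_{ij}\theta_\star^t$, harmlessly). Writing $s = \alpha\eta/p_{ij}$ and taking the conditional expectation over the choice at step $k$,
\[
\esp{\|z_{t,k+1}^{(ij)} - \theta_\star^t\|^2 \mid \mathcal{F}_k} = (1-p_{ij})\|z_{t,k}^{(ij)} - \theta_\star^t\|^2 + p_{ij}\big\|(1-s)(z_{t,k}^{(ij)} - \theta_\star^t) + s(\theta_{t,k}^{(i)} - \theta_\star^t)\big\|^2 .
\]
Expanding the square and applying Young's inequality to the cross term, with a weight chosen so that the condition $\eta_t\alpha < p_{ij}$ forces the coefficient of $\|z_{t,k}^{(ij)} - \theta_\star^t\|^2$ to be at most $1-\rho$ with $\rho = \eta_t\alpha/2$ (the same $\rho$ as in Theorem~\ref{thm:dvr_dual_guarantees}), gives $\esp{\|z_{t,k+1}^{(ij)} - \theta_\star^t\|^2 \mid \mathcal{F}_k} \leq (1-\rho)\|z_{t,k}^{(ij)} - \theta_\star^t\|^2 + c\,\|\theta_{t,k}^{(i)} - \theta_\star^t\|^2$ for an explicit constant $c$ depending on $p_{ij}$, $\alpha$ and $\eta_t$.

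Next I would sum over the $nm$ virtual edges, using $\sum_{i,j}\|\theta_{t,k}^{(i)} - \theta_\star^t\|^2 = m\sum_i \|\theta_{t,k}^{(i)} - \theta_\star^t\|^2$, to obtain, with $\rho_{\rm sum}$ the resulting constant (of order $mc$),
\[
\esp{\sum_{i,j}\|z_{t,k+1}^{(ij)} - \theta_\star^t\|^2} \leq (1-\rho)\,\esp{\sum_{i,j}\|z_{t,k}^{(ij)} - \theta_\star^t\|^2} + \rho_{\rm sum}\,\esp{\sum_{i}\|\theta_{t,k}^{(i)} - \theta_\star^t\|^2} .
\]
Applying Lemma~\ref{lemma:primal_error} to outer problem $t$ started from the warm-started dual point $\lambda_0^t$ gives $\esp{\sum_i \|\theta_{t,k}^{(i)} - \theta_\star^t\|^2} \leq (1-\rho)^k C_0(t)$, where $C_0(t)$ is the corresponding initial-error constant. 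Writing $a_k = \esp{\sum_{i,j}\|z_{t,k}^{(ij)} - \theta_\star^t\|^2}$, this yields $a_{k+1} \leq (1-\rho)a_k + \rho_{\rm sum}(1-\rho)^k C_0(t)$ with $a_0 = \|\theta_0^t - \theta_\star^t\|^2_{\comp}$ deterministic, and a straightforward induction over $k = 0,\dots,K-1$ gives $a_K \leq (1-\rho)^K a_0 + \rho_{\rm sum} C_0(t)\sum_{k=0}^{K-1}(1-\rho)^{K-1-k}(1-\rho)^k = (1-\rho)^K a_0 + \rho_{\rm sum}C_0(t)\,K(1-\rho)^{K-1}$. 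Factoring out $(1-\rho)^K$ gives exactly the claimed bound.

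The only delicate point is the tuning in the first step: one must check that the Young's-inequality split can be arranged so that the self-coefficient is genuinely $\leq 1-\rho$ \emph{with the same} $\rho = \eta_t\alpha/2$ used for the node iterates, and to read off $\rho_{\rm sum}$ explicitly; matching these two geometric rates is exactly what produces the clean $K(1-\rho)^{K-1}$ factor when unrolling (a mismatched rate would leave a sum of a geometric series and a different constant). Everything else is a routine expansion, and the projector bookkeeping on $\Ker(f_{ij})^\perp$ is benign since $z^{(ij)}$ enters only through $\nabla f_{ij}$.
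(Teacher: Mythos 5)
Your proposal is correct and follows essentially the same route as the paper: condition on which virtual edge is sampled, use the convex-combination structure of the update to obtain a $(1-\rho)$-contraction plus a node-error term, control that term with Lemma~\ref{lemma:primal_error}, and unroll the geometric recursion to get the $K(1-\rho)^{K-1}$ factor. The point you flag as delicate is in fact immediate: since $\alpha\eta/p_{ij}\leq 1$, convexity of the squared norm (equivalently, your Young step with unit weight) directly gives the self-coefficient $1-p_{ij}\rho_{ij}=1-\alpha\eta\leq 1-\rho$, which is exactly the inequality used in the paper, so no tuning is needed.
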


\begin{proof}
We cannot retrieve direct control over the $\theta_{t+1}^{(ij)}$ from control over the dual variables or the dual error, since this would require the $f_{ij}^*$ functions to be smooth, which they may not be. Yet, we leverage the fact that $\theta_{t+1}^{(ij)}$ is obtained by a convex combination between $\theta_{t}^{(ij)}$ and $\theta_{t}^{(i)}$ to obtain convergence of to $\theta_\star^t$. We note $j_{t,k}(i)$ the virtual node that is updated at time $(t,k)$ for node $i$. We note $\mathbb{E}_k$ the expectation relative to the value of $j_{t,k}(i)$. We start by remarking that:
\begin{align*}
    &\espk{k+1}{\|(\theta_{k+1}^t)^{(ij)} - \theta_\star^t\|^2}\\
    &= (1 - p_{ij}) \|(\theta_k^t)^{(ij)} - \theta_\star^t\|^2 + p_{ij} \|(1 - \rho_{ij} )(\theta_k^t)^{(ij)} + \rho_{ij}(\theta_k^t)^{(i)} - \theta_\star^t\|^2\\
    &\leq (1 - p_{ij}\rho_{ij}) \|(\theta_k^t)^{(ij)} - \theta_\star^t\|^2 + p_{ij}\rho_{ij}\|(\theta_k^t)^{(i)} - \theta_\star^t\|^2,
\end{align*}
where in the last inequality we used the convexity of the squared norm. We use that $p_{ij}\rho_{ij} \geq \rho$ (equal for the smallest one), and write that: 
\begin{equation}
    \esp{\|(\theta_{K}^t)^{(ij)} - \theta_\star^t\|^2} \leq (1 - \rho)^K \|(\theta_0^t)^{(ij)} - \theta_\star^t\|^2 + p_{ij}\rho_{ij} \sum_{k=1}^K (1 - \rho)^{k-1} \|(\theta_{K-k}^t)^{(i)} - \theta_\star^t\|^2.  
\end{equation}
Noting $\rho_{\rm sum} = \max_i \sum_{j=1}^m \rho_{ij}p_{ij}$ and $\|\theta_k^t - \theta_\star^t\|^2_{\comp, i} = \sum_{j=1}^m\|(\theta_{k}^t)^{(ij)} - \theta_\star^t\|^2$, we obtain 
\begin{equation}
    \esp{\|\theta_K^t - \theta_\star^t\|^2_{\comp, i}} \leq (1 - \rho)^K \|\theta_0^t - \theta_\star^t\|^2_{\comp, i} + \rho_{\rm sum} \sum_{k=1}^K (1 - \rho)^{k-1} \|(\theta_{K-k}^t)^{(i)} - \theta_\star^t\|^2.  
\end{equation}
Using Lemma~\ref{lemma:primal_error}, we know that $\sum_{i=1}^n \|(\theta_{k}^t)^{(i)} - \theta_\star^t\|^2 \leq C_0(t) (1 - \rho)^k $, with $C_0(t)$ a constant that depends on the initial conditions of outer iteration $t$. Therefore, 
\begin{equation}
    \sum_{i=1}^n\sum_{k=1}^K (1 - \rho)^{k-1} \|(\theta_{K-k}^t)^{(i)} - \theta_\star^t\|^2 \leq K (1 - \rho)^{K - 1}C_0(t).
\end{equation}
In the end, 
\begin{equation}
    \esp{\|\theta_K^t - \theta_\star^t\|^2_{\comp}} \leq (1 - \rho)^K \left[\|\theta_0^t - \theta_\star^t\|^2_{\comp} + \frac{\rho_{\rm sum} K}{1 - \rho} C_0(t) \right].  
\end{equation}
\end{proof}

This lemma has the following corollary:
\begin{corollary}[Warm-started virtual error decrease] \label{corr:inner_iter_theta_comp}
For all $(i,j)$, 
\begin{equation}
    \esp{\sum_{i,j}\|(\theta_{t+1})^{(ij)} - \theta_\star^t\|^2} \leq (1 - \rho)^K \left[6\left(C_\omega + \frac{n}{L}\right) + K\frac{\rho_{\rm sum}C_\comp}{1 - \rho}  \right]\varepsilon_{t-1},
\end{equation}
with $$
C_\comp = \frac{(\beta + \sigma_{\max} + L_{\max})}{(\sigma_{\min} + \beta)^2} \left(\frac{p_{\min}}{\eta_t}C_\phi + C_2 + C_\omega + 4\frac{\beta n}{L}\right)
$$

\end{corollary}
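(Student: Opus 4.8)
The plan is to substitute the warm-start estimates proved just above into Lemma~\ref{lemma:virtual_error_after_inner}, which already gives the shape of the bound:
\[
\esp{\sum_{i,j}\|(\theta_{t+1})^{(ij)} - \theta_\star^t\|^2} \leq (1 - \rho)^K \left[\|\theta_0^t - \theta_\star^t\|^2_{\comp} + \frac{\rho_{\rm sum} K}{1 - \rho} C_0(t) \right].
\]
So it remains only to bound each of the two quantities inside the bracket by a multiple of $\varepsilon_{t-1}$ and to read off the constants.

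For the first term I would simply invoke the virtual-parameter warm-start estimate, Lemma~\ref{lemma:warm_start_theta_comp} (complemented, to obtain the cleaner form $6(C_\omega + n/L)$, by Lemma~\ref{lemma:warm_start_theta_comm} so that everything is expressed through $C_\omega$ and $n/L$), which yields $\|\theta_0^t - \theta_\star^t\|^2_{\comp} \leq 6(C_\omega + n/L)\,\varepsilon_{t-1}$.

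For the second term, the key observation is that $C_0(t)$ is exactly the constant from Lemma~\ref{lemma:primal_error} applied to the inner problem $F_t$ started from the warm-started dual point $\lambda_0^t = \lambda_K^{t-1}$, so
\[
C_0(t) = \frac{\beta + \sigma_{\max} + L_{\max}}{2(\sigma_{\min} + \beta)^2}\left(\frac{p_{\min}}{\eta_t}D_\phi(\lambda_\star^t, \lambda_0^t) + D_t(\lambda_\star^t) - D_t(\lambda_0^t)\right).
\]
Both dual quantities on the right are themselves warm-start errors of outer iteration $t$, which we have just learned to control: Lemma~\ref{lemma:warm_start_breg} gives $D_\phi(\lambda_\star^t, \lambda_0^t) \leq C_\phi\,\varepsilon_{t-1}$, and Lemma~\ref{lemma:warm_start_dual} gives $D_t(\lambda_\star^t) - D_t(\lambda_0^t) \leq (C_2 + C_\omega + 4\beta n/L)\,\varepsilon_{t-1}$. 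Substituting these yields $C_0(t) \leq \tfrac12 C_\comp\,\varepsilon_{t-1} \leq C_\comp\,\varepsilon_{t-1}$ with $C_\comp$ exactly as in the statement; plugging this together with the first-term bound back into the bracket and factoring out $\varepsilon_{t-1}$ gives the claimed inequality.

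The main obstacle is purely bookkeeping rather than any new idea: one must be careful that $C_0(t)$ is evaluated with the warm-started dual variables $\lambda_0^t$ and not the global initialization; that the recursion hypotheses from \eqref{eq:recursion_theta_comm} through \eqref{eq:recursion_dual_error} at step $t$ are precisely what is needed to apply Lemmas~\ref{lemma:warm_start_dual} and~\ref{lemma:warm_start_breg}; and that the numerical constants line up (the factor $\tfrac12$ absorbed from Lemma~\ref{lemma:primal_error} into $C_\comp$, and the slack between the $\|\cdot\|_\comp$ warm-start constant of Lemma~\ref{lemma:warm_start_theta_comp} and $6(C_\omega + n/L)$, which holds once $C_1$ has been chosen small enough when setting up the recursion). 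Beyond that there is nothing to prove: all the analytic content already sits in Lemma~\ref{lemma:virtual_error_after_inner} and in the warm-start lemmas.
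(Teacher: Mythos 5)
Your proposal is correct and follows essentially the same route as the paper: it plugs the warm-start bounds (Lemmas~\ref{lemma:warm_start_theta_comp}, \ref{lemma:warm_start_breg}, \ref{lemma:warm_start_dual}) and the expression of $C_0(t)$ from Lemma~\ref{lemma:primal_error} into Lemma~\ref{lemma:virtual_error_after_inner}, exactly as the paper does. The constant-bookkeeping slack you flag (the $\tfrac12$ from Lemma~\ref{lemma:primal_error} and the gap between the $2(C_\omega + 2mC_1)$ bound of Lemma~\ref{lemma:warm_start_theta_comp} and the stated $6(C_\omega + n/L)$) mirrors an imprecision already present in the paper, whose own proof likewise just cites Lemma~\ref{lemma:warm_start_theta_comp} for the first term.
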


\begin{proof}
    Using Lemmas~\ref{lemma:primal_error},~\ref{lemma:warm_start_breg} and~\ref{lemma:warm_start_dual}, we write: 
    \begin{align*}
        C_0(t) &= \frac{(\beta + \sigma_{\max} + L_{\max})}{(\sigma_{\min} + \beta)^2} \left(\frac{p_{\min}}{\eta_t}D_\phi(\lambda_\star^t, \lambda_0^t) + \left(D(\lambda_\star^t) - D(\lambda_0^t)\right)\right) \leq C_\comp \varepsilon_{t-1}
    \end{align*}
    We use Lemma~\ref{lemma:warm_start_theta_comp} for the first term.
\end{proof}

\begin{lemma}[Condition on $K$]
If Equations~\eqref{eq:recursion_theta_comm},~\eqref{eq:recursion_theta_comp} and~\eqref{eq:recursion_dual_error} hold at time $t$, and  $K$ is such that:
$$ (1 - \rho)^K \leq \min\left(\frac{C_1(1 - \rho^\out)}{12(C_\omega + n/L)}, \frac{C_1(1 - \rho^\out)(1 - \rho)}{K \rho_{\rm sum} C_\comp}, \frac{C_2}{C_L}, \frac{n(\sigma_{\min} + \beta)^2}{2L C_L(\beta + \sigma_{\max} + L_{\max})} \right),$$
then they also hold at time $t+1$.
\end{lemma}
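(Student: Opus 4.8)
The plan is to establish \eqref{eq:recursion_theta_comm}--\eqref{eq:recursion_dual_error} for all $t$ by strong induction; the statement to be proved is exactly the inductive step, showing that if these hold at every $s\le t$ then, under the displayed bound on $K$, they hold at $t+1$ (the base case $t=-1$ is the defining property of $C_1,C_2$, and $t=0$ follows by running a marginally longer first inner loop, at the cost of one $\log$ factor). Throughout, $\rho=\eta_t\alpha/2$ is the per-step inner contraction of Theorem~\ref{thm:dvr_dual_guarantees}, $\lambda_0^{t+1}$ is the warm-started dual point of outer iteration $t+1$ (its dual coordinates coincide with those of the last iterate $\lambda_{t+1}$ of iteration $t$, and $\theta_0^{t+1}=\theta_{t+1}+\tfrac{\beta}{\beta+\sigma_i}(\omega_{t+1}-\omega_t)$), and $\theta_{t+2},\lambda_{t+2}$ denote the iterates produced after the $K$ inner steps of iteration $t+1$. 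Recall also that $F_{t+1}$ is Problem~\eqref{eq:dual_problem_catalyst}, i.e.\ the usual dual with $\Sigma$ replaced by $\Sigma_\beta$, so that Theorem~\ref{thm:dvr_dual_guarantees} and Lemma~\ref{lemma:primal_error} apply to it verbatim.

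First I would control the warm-start error of outer iteration $t+1$, i.e.\ bound every relevant quantity at $\lambda_0^{t+1}$ by a constant times $\varepsilon_t$. Lemma~\ref{lemma:control_yt} gives $\|\omega_{t+1}-\omega_t\|^2\le C_\omega\varepsilon_t$; this is the step that consumes \eqref{eq:recursion_theta_comm} at all earlier times, through the outer Catalyst estimate \eqref{eq:thm_convergence_cata_outer}. Feeding this into Lemmas~\ref{lemma:warm_start_dual}, \ref{lemma:warm_start_theta_comp}, \ref{lemma:warm_start_theta_comm} and \ref{lemma:warm_start_breg} (applied to iteration $t+1$, for which the induction hypothesis supplies the bounds they need) yields $D_{t+1}(\lambda_\star^{t+1})-D_{t+1}(\lambda_0^{t+1})\le C_D\varepsilon_t$, $\|\theta_0^{t+1}-\theta_\star^{t+1}\|_\comp^2\le 2(C_\omega+2mC_1)\varepsilon_t$, $\sum_i\|(\theta_0^{t+1})^{(i)}-\theta_\star^{t+1}\|^2\le 6(C_\omega+n/L)\varepsilon_t$ and $D_\phi(\lambda_\star^{t+1},\lambda_0^{t+1})\le C_\phi\varepsilon_t$. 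In particular the dual Lyapunov quantity $\tfrac{p_{\min}}{\eta_t}D_\phi(\lambda_\star^{t+1},\lambda_0^{t+1})+D_{t+1}(\lambda_\star^{t+1})-D_{t+1}(\lambda_0^{t+1})$ is at most $C_L\varepsilon_t$, where $C_L=\tfrac{p_{\min}}{\eta_t}C_\phi+C_D$ and $C_\comp=\tfrac{\beta+\sigma_{\max}+L_{\max}}{(\sigma_{\min}+\beta)^2}C_L$.

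Next I would propagate these warm-start bounds through the $K$ inner steps using the linear convergence of DVR on $F_{t+1}$, and match the result against the targets $\varepsilon_{t+1}=(1-\rho^\out)\varepsilon_t$. Theorem~\ref{thm:dvr_dual_guarantees} gives $D_{t+1}(\lambda_\star^{t+1})-D_{t+1}(\lambda_{t+2})\le(1-\rho)^K C_L\varepsilon_t$, which is $\le C_2\varepsilon_{t+1}$ once $(1-\rho)^K\lesssim C_2/C_L$, establishing \eqref{eq:recursion_dual_error} at $t+1$. Lemma~\ref{lemma:primal_error} applied to $F_{t+1}$ started from $\lambda_0^{t+1}$ gives $\sum_i\|(\theta_{t+2})^{(i)}-\theta_\star^{t+1}\|^2\le(1-\rho)^K C_\comp\varepsilon_t$, which is $\le\tfrac{2n}{L}\varepsilon_{t+1}$ once $(1-\rho)^K\lesssim n(\sigma_{\min}+\beta)^2/\big(L\,C_L(\beta+\sigma_{\max}+L_{\max})\big)$, establishing \eqref{eq:recursion_theta_comm} at $t+1$. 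Finally Corollary~\ref{corr:inner_iter_theta_comp} (built from Lemma~\ref{lemma:virtual_error_after_inner} together with the warm-start bounds of the previous paragraph) gives $\sum_{ij}\|(\theta_{t+2})^{(ij)}-\theta_\star^{t+1}\|^2\le(1-\rho)^K\big[6(C_\omega+n/L)+K\tfrac{\rho_{\rm sum}C_\comp}{1-\rho}\big]\varepsilon_t$; bounding the two summands separately by $C_1\varepsilon_{t+1}$ gives the remaining two conditions $(1-\rho)^K\lesssim C_1(1-\rho^\out)/(C_\omega+n/L)$ and $(1-\rho)^K\lesssim C_1(1-\rho^\out)(1-\rho)/(K\rho_{\rm sum}C_\comp)$, establishing \eqref{eq:recursion_theta_comp} at $t+1$. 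Taking the minimum of these four sufficient conditions recovers, up to the absolute constants, the displayed bound on $(1-\rho)^K$ (the constants being absorbed into the freedom in $C_1,C_2$ and the $\log$ slack in $K$), which closes the induction.

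The main obstacle is the first of these steps: showing that the warm-start error is $O(\varepsilon_t)$ rather than merely $O(1)$, since otherwise each subproblem would be solved from accuracy $O(1)$ down to $\varepsilon$, injecting a spurious $\log(\varepsilon^{-1})$ factor. This is delicate because $F_{t+1}$ is available only through its decentralized description (the local prox centers $\omega_{t+1}^{(i)}$), so one must pass through the averaged reformulation \eqref{eq:catalyst_average_formulation} and the $1$-Lipschitzness of the proximal operator to obtain $\|\theta_\star^{t+1}-\theta_\star^t\|\le\tfrac1n\|\omega_{t+1}-\omega_t\|$, and because the dependencies are circular — $C_\omega$ in Lemma~\ref{lemma:control_yt} is itself controlled via the outer Catalyst bound, which relies on \eqref{eq:recursion_theta_comm} at all earlier times — so the induction must be organised so that the lemmas are invoked in a consistent order. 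A secondary point is the stray factor $K$ in Corollary~\ref{corr:inner_iter_theta_comp}, which makes $K$ appear on both sides of the condition and therefore forces $K=\tilde{O}(1/(\eta_t\alpha))$ rather than $O(1/(\eta_t\alpha))$; the logarithmic slack in the statement of Theorem~\ref{thm:rate_dvr_catalyst_precise} is genuinely needed.
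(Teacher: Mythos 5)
Your proposal is correct and follows essentially the same route as the paper: the virtual-parameter condition is obtained from Corollary~\ref{corr:inner_iter_theta_comp} (giving the first two constraints on $K$), the dual-error condition from Theorem~\ref{thm:dvr_dual_guarantees} combined with the warm-start Lemmas~\ref{lemma:warm_start_breg} and~\ref{lemma:warm_start_dual} through $C_L = \frac{p_{\min}}{\eta_t}C_\phi + C_D$ (third constraint), and the node-parameter condition from Lemma~\ref{lemma:primal_error} (fourth constraint). Your write-up is simply more explicit than the paper's about the induction bookkeeping, the index shift, and the absorption of the $(1-\rho^\out)$ factors into the constants.
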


\begin{proof}
Using Corollary~\ref{corr:inner_iter_theta_comp}, we obtain that if $K$ is set such that
$$(1 - \rho)^K \left[6\left(C_\omega + \frac{n}{L}\right) + K\frac{\rho_{\rm sum}C_\comp}{1 - \rho}  \right] \leq C_1(1 - \rho^\out),$$
then the recursion condition is respected for the virtual parameters. This yields the first and second conditions on $K$. Now, we write $C_L = \left(\frac{p_{\min}}{\eta_t}C_\phi + C_D\right)$, then using Lemmas~\ref{lemma:warm_start_breg} and~\ref{lemma:warm_start_dual} (where $C_\phi$ and $C_D$ are defined), we obtain using Theorem~\ref{thm:dvr_dual_guarantees} that
\[D_t(\lambda_\star^{t}) - D_t(\lambda_{t+1}) \leq C_L (1 - \rho)^K \varepsilon_{t-1},\]
since $\lambda_{t+1}$ is obtained by performing $K$ iterations of DVR to minimize $F_t$ starting from $\lambda_t$. This yields the third condition on $K$. Finally, the last condition on $K$ is obtained by leveraging Lemma~\ref{lemma:primal_error}.

\end{proof}

\section{Experiments}
\label{app:experiments}
For the experiments, the following logistic regression problem is solved:
\begin{equation}
    \min_{\theta \in \R^d}  \sum_{i=1}^n \left[\frac{\sigma}{2} \|\theta\|^2 + \sum_{j=1}^m \frac{1}{m}\log(1 + \exp(-y_{ij} X_{ij}^\top \theta)) \right],
\end{equation}
where the pairs $(X_{ij}, y_{ij}) \in \R^d \times \{-1, 1\}$ are taken from the RCV1 dataset, which we downloaded from \url{https://www.csie.ntu.edu.tw/~cjlin/libsvmtools/datasets/binary.html}.

Figure~\ref{fig:plots_full} is the full version of Figure~\ref{fig:plots}, in which we report the number of individual gradients and number of communications for each configuration. We see that accelerated EXTRA actually outperforms EXTRA when the regularization is small, as already mentioned in the main text. We also see that Accelerated EXTRA and Accelerated DVR have comparable communication complexity on the grid graph, when $\gamma$ is smaller. Yet, the computation complexity of (accelerated) DVR is much smaller, so accelerated DVR is much faster overall as long as $\tau$ is not too big. 

\begin{figure}
\subfigure[Erd\H{o}s-R\'enyi, $\sigma= m \cdot 10^{-5}$
]{
    \includegraphics[width=0.32\linewidth]{img/erdos_81_reg5_comp.pdf} \includegraphics[width=0.32\linewidth]{img/erdos_81_reg5_comm.pdf}
    \includegraphics[width=0.32\linewidth]{img/erdos_81_tau_250.pdf}
    \label{fig:erdos_c5_full}
}\\
\subfigure[Grid, $\sigma=m \cdot 10^{-5}$
]{
    \includegraphics[width=0.31\linewidth]{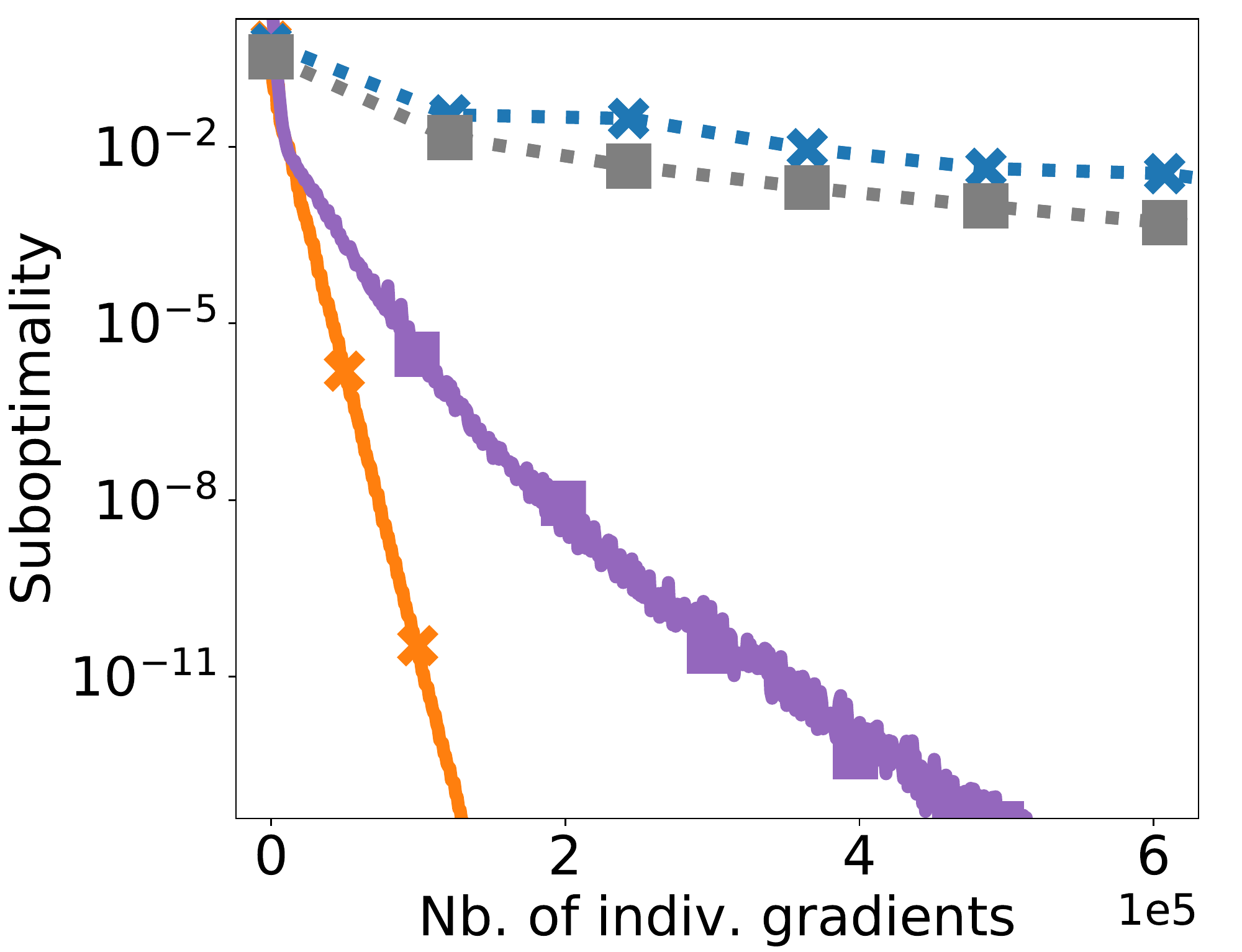}
    \includegraphics[width=0.31\linewidth]{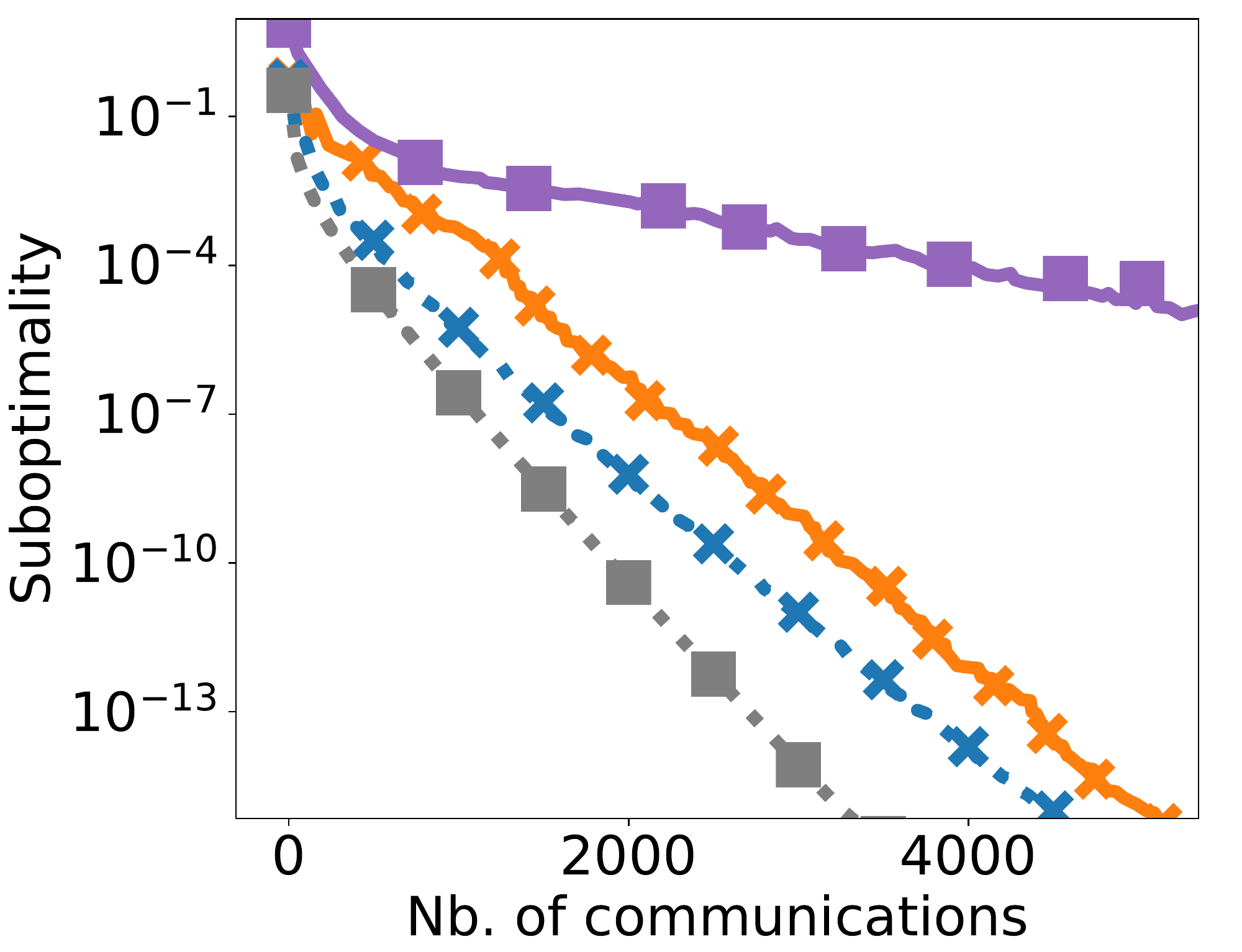}
    \includegraphics[width=0.31\linewidth]{img/grid_81_reg5_tau_250.pdf}
    \label{fig:grid_c5_full}
}
\subfigure[Erd\H{o}s-R\'enyi, $\sigma=m \cdot 10^{-7}$]{
    \includegraphics[width=0.31\linewidth]{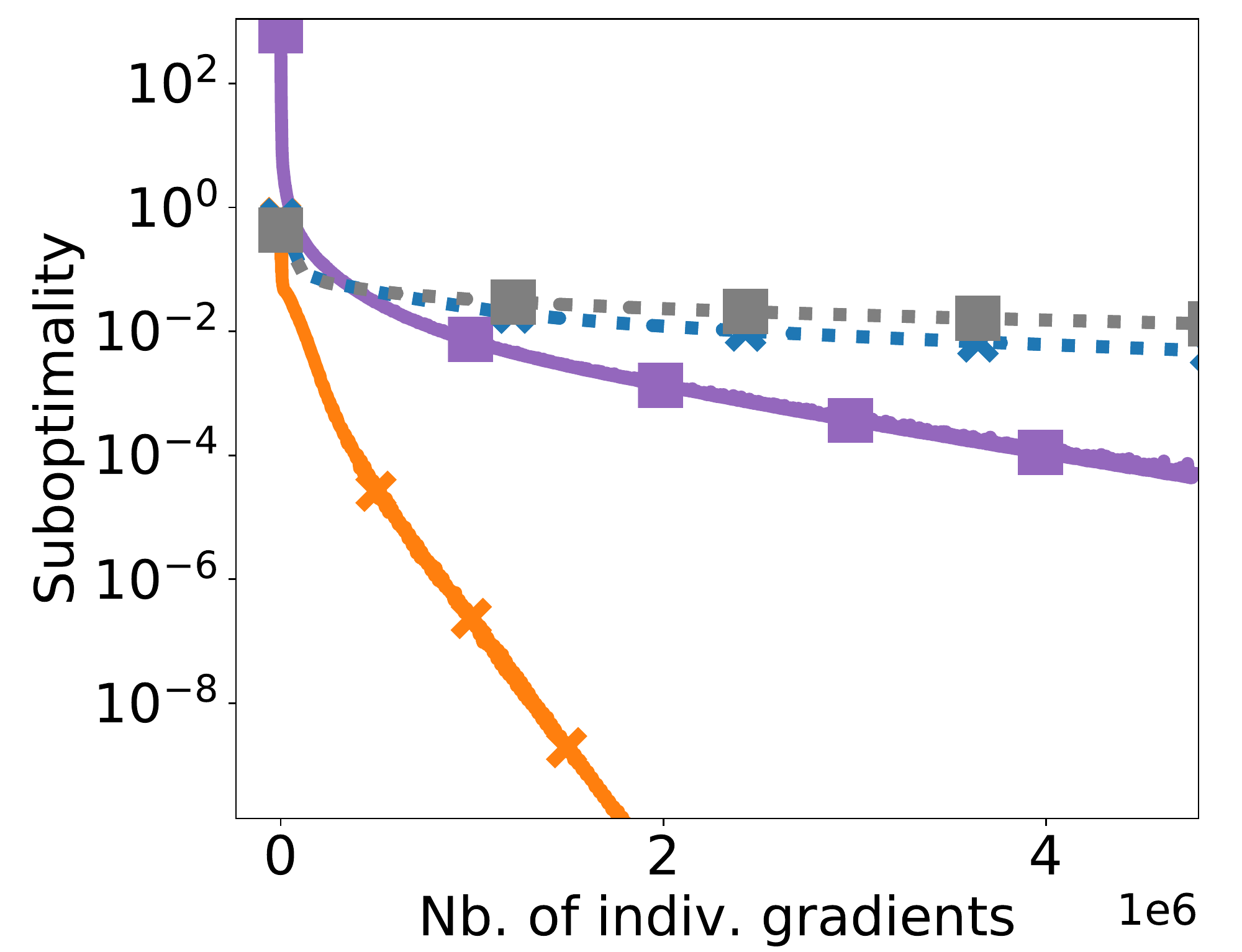}
    \includegraphics[width=0.31\linewidth]{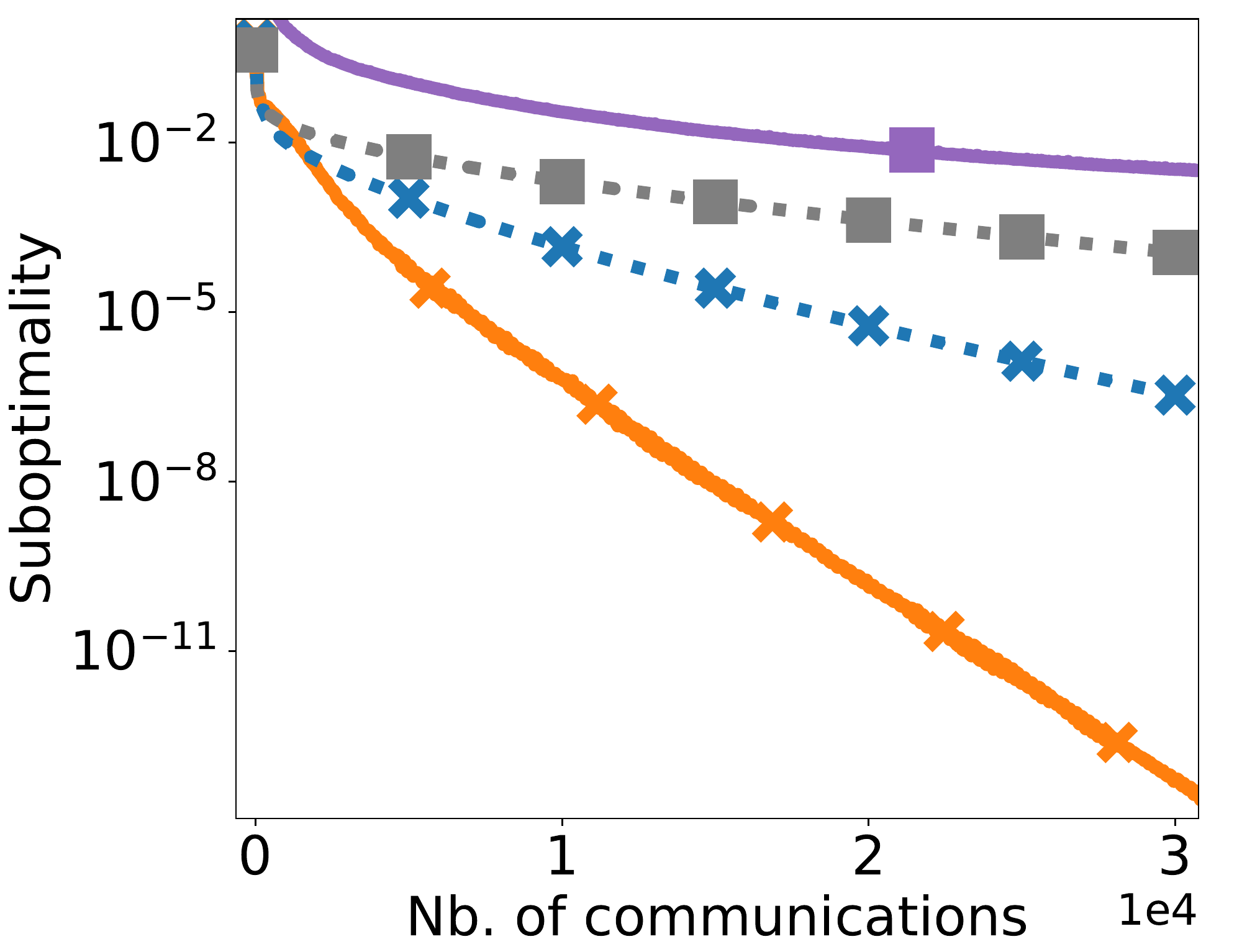}
    \includegraphics[width=0.31\linewidth]{img/erdos_81_reg7_tau_250.pdf}
    \label{fig:grid_c7_full}
}
\subfigure[Grid, $\sigma=m \cdot 10^{-7}$]{
    \includegraphics[width=0.31\linewidth]{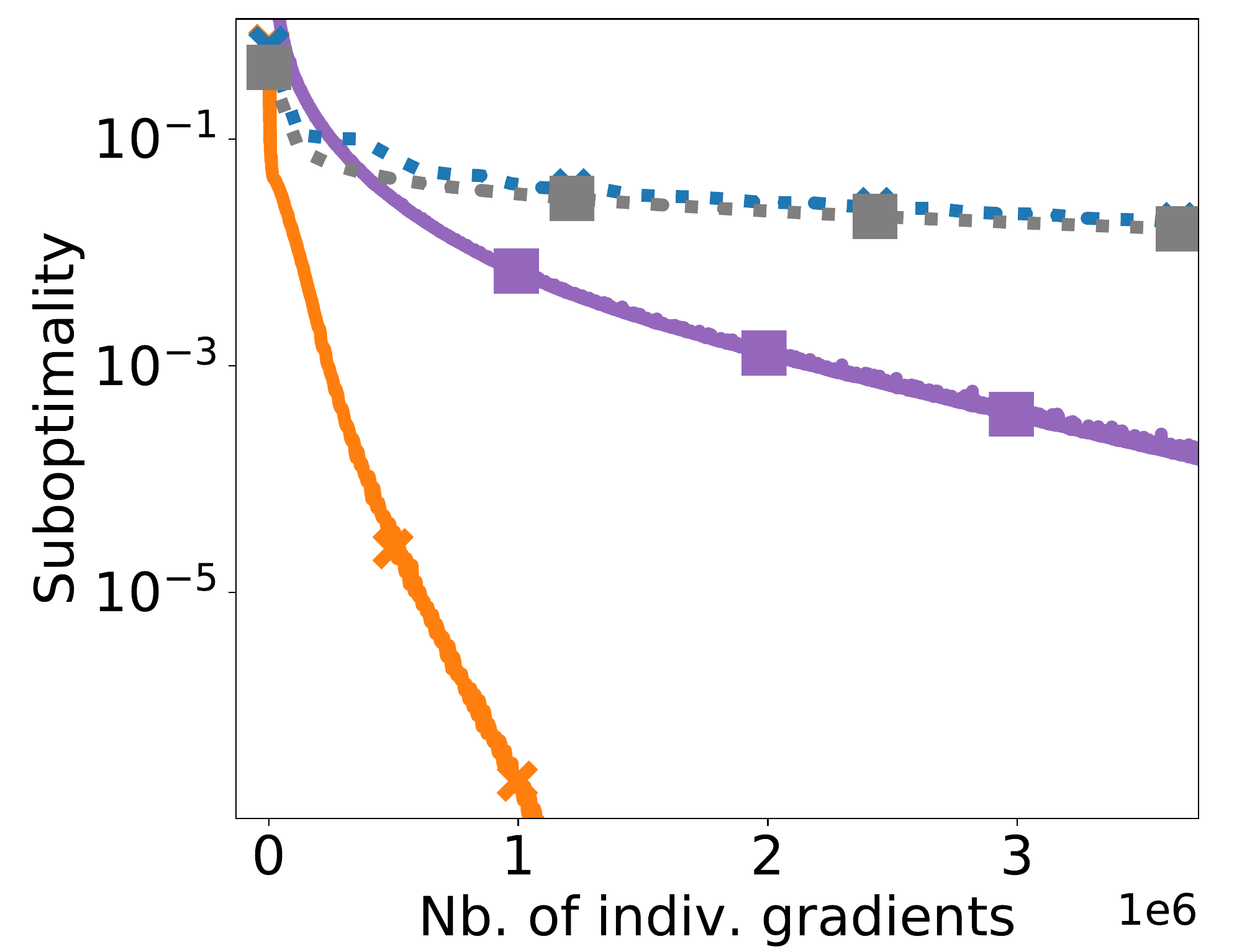}
    \includegraphics[width=0.31\linewidth]{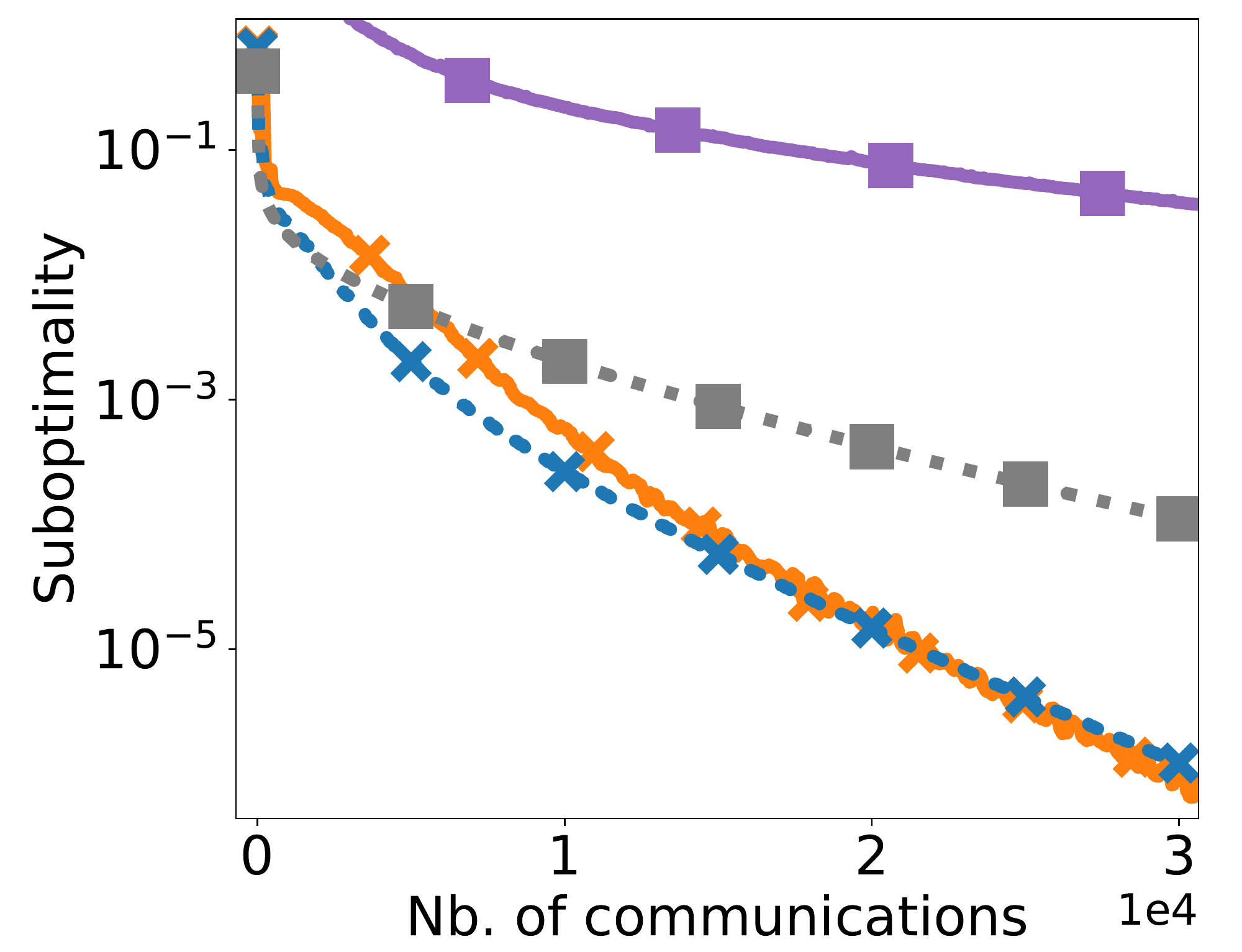}
    \includegraphics[width=0.31\linewidth]{img/grid_81_reg7_tau_250.pdf}
    \label{fig:erdos_c7_full}
}
\caption{Experimental results for the RCV1 dataset with different graphs of size $n=81$, with $m=2430$ samples per node, and with different regularization parameters. \label{fig:plots_full}}
\end{figure}

\end{document}